\documentclass[a4paper,11pt,reqno]{amsart}
\usepackage{amsmath}
\usepackage[T1]{fontenc}
\usepackage{amssymb}
\usepackage{amsthm,graphicx}
\usepackage{color}
\usepackage[lmargin=2.5 cm,rmargin=2.5 cm,tmargin=3.5cm,bmargin=2.5cm,
paper=a4paper]{geometry}

\newcommand{\Ab}{\mathbf A}

\newcommand{\kp}{\kappa}
\newcommand{\Om}{\Omega}

\newcommand{\Fb}{\mathbf F}

\newcommand{\R}{\mathbb R}
\newcommand{\C}{\mathbb C}
\newcommand{\E}{\mathrm{E}_{\rm gs}(\kappa, H)}

\newcommand{\Es}{E_{\rm surf}}
\newcommand{\fb}{\mathbf{f}}
\newcommand{\omp}{\Omega_+}
\newcommand{\omm}{\Omega_-}
\newcommand{\pomp}{(\partial\Omega)_+}
\newcommand{\pomm}{(\partial\Omega)_-}

\DeclareMathOperator{\curl}{curl}

\newtheorem{thm}{Theorem}[section]
\newtheorem{prop}[thm]{Proposition}
\newtheorem{lem}[thm]{Lemma}

\newtheorem{Ass}[thm]{Assumption}

\newtheorem{proposition}[thm]{Proposition}

\theoremstyle{remark}

\newtheorem{rem}[thm]{Remark}

\numberwithin{equation}{section}

\title[Decay of superconductivity]
{Decay of superconductivity \\ away from the magnetic  zero set}

\author{Bernard Helffer}
\author{Ayman Kachmar}
\address[B. Helffer]{Laboratoire de Math\'ematiques Jean Leray, CNRS,  Universit\'e de Nantes, France, and Laboratoire de Math\'ematiques, Universit\'e de Paris-Sud, Univ Paris-Saclay, France.}
\email{bernard.helffer@univ-nantes.fr}
\address[A. Kachmar]{Department of Mathematics, Lebanese University, Hadat, Lebanon.}
\email{ayman.kashmar@gmail.com}
\date{\today}
%\thanks{Mathematics Subject Classification (2010): 35B40, 35P15, 35Q56}
\begin{document}

\begin{abstract}
We establish exponential bounds on the Ginzburg-Landau order parameter away from the curve where the applied magnetic field vanishes. In the units used in this paper, the estimates are valid when the parameter measuring the strength of the applied magnetic field is comparable with the Ginzburg-Landau parameter.  This completes a previous work by the authors analyzing the case when  this strength was much higher.  Our results display the distribution of surface and bulk superconductivity and are valid under the assumption that the magnetic field is H\"older continuous.
\end{abstract}

\maketitle %\tableofcontents

\section{Introduction}\label{hc2-sec:int}

\subsection{The functional}
In non-dimensional units, the Ginzburg-Landau functional is defined
as follows,
\begin{equation}\label{eq-3D-GLf}
\mathcal E(\psi,\Ab)=\int_\Omega \left(|(\nabla-i\kappa H\Ab)\psi|^2-\kappa^2|\psi|^2+\frac{\kappa^2}2|\psi|^4+(\kappa H)^2|\curl\Ab-B_0|^2\right)\,dx\,,
\end{equation}
where:
\begin{itemize}
\item $\Omega\subset\R^2$ is an open, bounded and simply connected
set with a $ C^\infty$  boundary\,; 
\item $(\psi,\Ab)\in H^1(\Omega;\mathbb C)\times H^1(\Omega;\mathbb
R^2)$\,;
\item $\kappa>0$ and $H>0$ are two parameters\,;
\item $B_0$ is a  real-valued function in $L^2(\Omega)\,$. 
\end{itemize}
The superconducting sample is supposed to occupy a long cylinder with vertical axis and  horizontal cross section 
$\Omega$. The parameter $\kappa$ is the Ginzburg-Landau parameter  that expresses the properties of the superconducting material. The
applied magnetic field is $\kappa HB_0 \vec{e}$, where $\vec{
e}=(0,0,1)$.
 The configuration pair $(\psi,\Ab)$ 
describes the state of superconductivity as follows:
$|\psi|^2$ measures the density of the superconducting Cooper
pairs, $\curl\Ab$ measures the induced magnetic field in the
sample and $j:=(i\psi,\nabla\psi -i\kappa H\Ab\psi)$ measures the induced super-current. Here $(\cdot,\cdot)$ denotes the inner product in $\C$ defined as follows,  $(u,v)=u_1v_1+u_2v_2$ where $u=u_1+iu_2$ and $v=v_1+iv_2\,$.

At equilibrium, the state of the superconductor is described by the (minimizing) configurations $(\psi,\Ab)$ that realize the following ground state energy
\begin{equation}\label{eq-gse}
\E=\inf\{\mathcal E(\psi,\Ab)~:~(\psi,\Ab)\in H^1(\Omega;\mathbb
C)\times H^1(\Omega;\mathbb R^2)\}\,.
\end{equation}
%{\clr $h_{ex}$ does not seem to be defined.\\}
Such configurations are critical points of the functional introduced in \eqref{eq-3D-GLf},  that is they solve the following system of Euler-Lagrange equations ($\nu$ is the unit inward normal on the boundary)
\begin{equation}\label{eq:GL}
\left\{
\begin{array}{rll}
-\big(\nabla-i\kappa H\Ab\big)^2\psi&=\kp^2(1-|\psi|^2)\psi &{\rm in}\ \Om\,,\\
-\nabla^{\perp}  \big(\curl\Ab-B_0\big)&= (\kp H)^{-1}{\rm Im}\big(\overline{\psi}\,(\nabla-i\kp H {\bf A})\psi\big) & {\rm in}\ \Om\,,\\
\nu\cdot(\nabla-i\kappa H\Ab)\psi&=0 & {\rm on}\ \partial \Om \,,\\
{\rm curl}{\bf A}&=B_0 & {\rm on}\ \partial \Om \,.
\end{array}
\right.
\end{equation}
Once a  choice of  $(\kappa,H)$ is fixed, the notation   $(\psi,\Ab)_{\kappa,H}$  stands for a solution of \eqref{eq:GL}.  When $B_0$ belongs to $C^0(\overline{\Omega})$, we  introduce two constants $\beta_0$ and $\beta_1$ that will  play a central role in this paper:
\begin{equation}\label{eq:ep0}
\beta_0:=\sup_{x\in\overline{\Omega}}|B_0(x)|\,\quad{\rm and}\quad \beta_1:=\sup_{x\in\partial\Omega}|B_0(x)|\,.
\end{equation}

\subsection{The case with a constant magnetic}

A huge mathematical literature is devoted to the analysis of the functional in \eqref{eq-3D-GLf} when the magnetic field is constant. This corresponds to taking $B_0=1$ in \eqref{eq-3D-GLf}. The two monographs
\cite{FH-b, SaSe} and the references therein are mainly devoted to this subject. One important situation is  the transition from {\it bulk} to {\it surface} superconductivity. This happens when the parameter $H$ increases between two critical values $H_{C_2}$ and $H_{C_3}$ called the second and third critical fields respectively.

In this analysis the {\it de\,Gennes constant} plays a central role. This constant is universal and  defined as follows
\begin{equation}\label{eq:Theta0}
\Theta_0=\inf_{\xi\in\R}\Big\{\inf_{\|u\|_2=1}\Big(\int_0^\infty\big(|u'(t)|^2+(t-\xi)^2|u(t)|^2\big)\,dt\Big)\Big\}\,.
\end{equation}
Furthermore, it is known  (cf. \cite{FH-b}) that
\begin{equation}\label{eq:Theta_0a}
\frac12<\Theta_0<1\,.
\end{equation}

The de\,Gennes constant   appears indeed in the asymptotics of $H_{C_3}$ for $\kappa$ large
$$
H_{C_3}\sim \Theta_0^{-1}\kappa\,,
$$ 
while we have for the second critical field
 $$H_{C_2}\sim \kappa \,.
$$
To be more specific, if $b>0$ is a constant and  $(\psi,\Ab)_{\kappa, H}$ is a minimizer of the functional in \eqref{eq-3D-GLf} for $H=b\kappa$ (and $B_0=1$), the concentration of $\psi$ in the limit $\kappa\to\infty$ depends strongly on  $b\,$.  

If $0<b<1$, then  $\psi$ is uniformly distributed in the domain $\Omega$ (cf. \cite{Kac, SS02})\,.
If $1<b<\Theta_0^{-1}$, then $\psi$ is concentrated on the surface and decays exponentially in the bulk (cf. \cite{CR, Pa02})\,.
 If $b>\Theta_0^{-1}$, then $\psi=0$ (cf. \cite{HePa, LuPa1}).
The critical cases when  $b$ is close to $1$   or $\Theta_0^{-1}$ are thoroughly analyzed in \cite{FK-am, FoHe04}.

\subsection{The case with a non-vanishing magnetic field}

The case of a non-constant magnetic field $B_0$ satisfying the assumptions
$$B_0\in C^0(\overline\Omega)\quad{\rm and}\quad \inf_{x\in\overline\Omega}B_0(x)>0\,,$$ 
is qualitatively similar to the constant magnetic field case.  This situation is reviewed in \cite[Sec.~2.2]{HK}.
Surface superconductivity is studied in \cite{FH-b}, while the transition to the normal solution is discussed in \cite{Ra}.

\subsection{The case with a vanishing magnetic field}

The results in this paper are valid for a large class of applied magnetic fields, see Assumption~\ref{ass:mf*} below. However, one interesting  situation covered by our results is  the case where the applied magnetic field has a non-trivial zero set. In the presence of such an applied magnetic field, we will study  the concentration  of the minimizers $(\psi,\Ab)_{\kappa,H}$ of \eqref{eq-3D-GLf} in the asymptotic limit $\kappa\to+\infty$ and $H\approx\kappa\,$. Unlike the results in \cite{FH-b, Ra} that only investigate surface superconductivity, the situation  discussed here includes bulk superconductivity as well.

The discussion in this subsection is focusing on magnetic fields that  satisfy:
\begin{Ass}\label{ass:mf}{\bf[On the applied magnetic field]}~
\begin{enumerate}
\item The function $B_0$ is in  $C^{1}(\overline{\Omega})$\,.
\item The set $\Gamma:=\{x\in\overline\Omega~:~B_0(x)=0\}$ is non-empty and consists of a finite disjoint  union of  simple smooth curves.
\item $\Gamma\cap\partial\Omega$ is either {\bf empty} or a {\bf finite} set.
\item For all  $x\in\overline\Omega\,$, $|B_0(x)|+|\nabla B_0(x)|\neq 0\,$.
\item The set $\Gamma$  is allowed to intersect $\partial\Omega$  transversely. More precisely, if $\Gamma\cap\partial\Omega\not=\emptyset\,$, then on this set,
$\nu\times\nabla B_0\not=0\,$, where $\nu$ is the normal vector field along $\partial\Omega\,$. 
\end{enumerate}
\end{Ass}
A much weaker assumption will be described later (cf. Assumption~\ref{ass:mf*}).
Under Assumption~\ref{ass:mf}, we may introduce the following two non-empty open sets
\begin{equation}\label{eq:Om-pm}
\Omega_+=\{x\in\Omega~:~B_0(x)>0\}\quad{\rm and}\quad \Omega_-=\{x\in\Omega~:~B_0(x)<0\}\,.
\end{equation}
The boundaries of $\Omega_\pm$ are given as follows
$$\partial\Omega_\pm=
\Gamma\cup(\overline{\Omega}_\pm\cap\partial\Omega)\,.
$$
%
%\subsection{Motivation and earlier results} 
Magnetic fields satisfying  Assumption~\ref{ass:mf}   are discussed  in many contexts:  %In particular:
%. Some of these contributions are listed below.
\begin{itemize}
\item  In geometry, this appears in \cite{M} under the appealing question: {\it can we hear the zero locus of a magnetic field\,?}
\item In the semi-classical analysis of the spectrum of Schr\"odinger operators with magnetic fields satisfying Assumption~\ref{ass:mf} (and $\Gamma\subset\Omega$). These operators are extensively studied in \cite{DR, HeMo-JFA, HKo}.
%{\clr Bernard: Dans le cas o\`u $\Gamma$ ne rencontre pas le bord, v\'erifier ce qui est connu chez Montgomery, Helffer-Mohamed (96), Helffer-Kordyukov (09), Dombrowski-Raymond.\\} 
\item In the study of the time-dependent Ginzburg-Landau equations \cite{AlHe, AHP},    applied magnetic fields as in  Assumption~\ref{ass:mf} naturally appear in  the presence of applied electric currents.%appears in the time-dependent
\item For superconducting surfaces submitted to constant magnetic fields \cite{CL},  the constant magnetic field may induce a smooth sign-changing magnetic field on the surface.% {\it feels} the constant field as a sign-changing   
\item In the transition from normal to superconducting configurations \cite{PK}, one meets the problem of determining $H$ such that the ground state energy in \eqref{eq-gse} vanishes on a curve meeting transversally the boundary. The results in \cite{PK} are  sharpened in \cite{Att3, Miq}.
\item The asymptotics of  the ground state energy in \eqref{eq-gse} and the concentration of the corresponding minimizers for large values of $\kappa$  and $H$ is analyzed in  \cite{Att, Att2, HK, HK2}. 
\end{itemize}
Of particular importance to us are the results of K. Attar in \cite{Att}.  These results hold under Assumption~\ref{ass:mf},  for $H=b\kappa$ with  $b>0$ constant. One of the results in \cite{Att} is that the ground state energy in \eqref{eq-gse} satisfies, as $\kappa\to +\infty\,$,
\begin{equation}\label{eq:Att}
\E=\kappa^2\int_\Omega g(b|B_0(x)|)\,dx+o(\kappa^2)\,.
\end{equation}
Here the function  $g(\cdot)$, which was  introduced  by Sandier-Serfaty in \cite{SS02}, is a continuous non-decreasing  function defined on $[0,\infty)$ and vanishes  on $[1,\infty)$ (cf.   \eqref{eq:g} for more details).

 K. Attar also obtained an interesting formula displaying the local distribution of the minimizing order parameter $\psi$.  If $(\psi,\Ab)_{\kappa,H}$ is a minimizer of the functional in \eqref{eq-3D-GLf} for $$H=b\kappa\,,$$
  and if  $\mathcal D$ is an open set in $\Omega$ with a smooth boundary, then, as $\kappa\to +\infty\,$,
\begin{equation}\label{eq:Attbis}
\int_{\mathcal D}|\psi (x)|^4\,dx=-2\int_{\mathcal D} g(b|B_0(x)|)\,dx+o(1)\,.
\end{equation}
The interest for an $L^4$ control of the order parameter comes back to Y. Almog (see \cite{Al} and the discussion in the book  \cite[Ch.~12, Sec.~12.6]{FH-b}).

The formula in \eqref{eq:Attbis} shows that  $\psi$ is weakly localized in the neighborhood of  $\Gamma$,  $\mathcal V \left(\frac 1 b\right)$, where:
\begin{equation}\label{eq:nu}
\mathcal V \left(\epsilon\right):=\Big\{ x \in \Omega\,,\, |B_0(x)| \leq \epsilon \Big\}\,.
\end{equation}

 For taking account of the boundary effects  (the surface superconductivity should play a role like in the constant magnetic field case) we also introduce  in $\partial \Omega$ the subset
\begin{equation}\label{eq:nu-bnd}
 \mathcal V^{\rm bnd}\left(\epsilon \right):= \big\{ x\in \partial \Omega\,,\,\Theta_0 |B_0(x)| \leq \epsilon \big\}\,.
 \end{equation}

 We would like to measure the strength of the (exponential) decay of the minimizing order parameter $\psi$ in 
the  domains 
\begin{equation}\label{eq:om(l)} 
\omega\left(\frac 1b\right):= \Omega \setminus\mathcal  V\left(\frac 1b\right)
\,.
\end{equation}

Note the role played by the  two  constants introduced in \eqref{eq:ep0}. 
If $\frac1b\geq \beta_0\,$, then $\mathcal V(\frac 1b)=\Omega\,$.  For this reason we will focus on the values of $b$ above $\beta_0^{-1}$. We also observe that, if $\frac1b\geq \Theta_0\beta_1\,$, then $\mathcal V^{\rm bnd}(\frac1 b)=\partial\Omega\,$. Hence, boundary effects are expected to appear when $b<\frac1{\Theta_0\beta_1}\,$.\\

\begin{figure}\label{fig1}
\begin{center}
\includegraphics[scale=0.4]{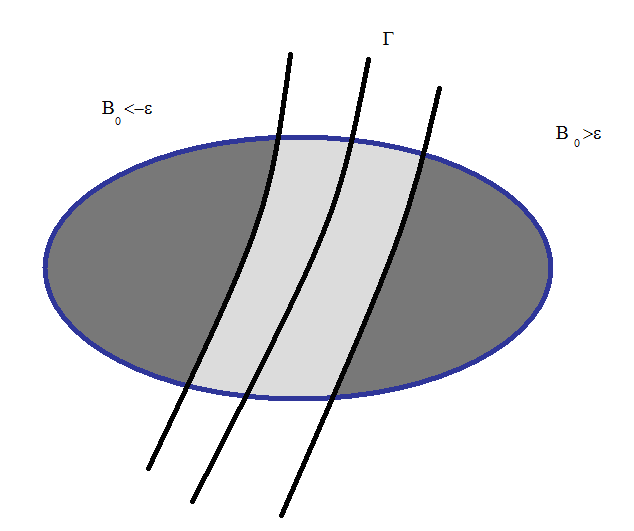} 
\end{center}
\caption{Illustration of Regime~I for $H=b\kappa$ and $b=1/\varepsilon$\,: Superconductivity is destroyed in the dark regions and survived on the entire boundary.}
\end{figure}
\begin{figure}\label{fig2}
\begin{center}
\includegraphics[scale=0.5]{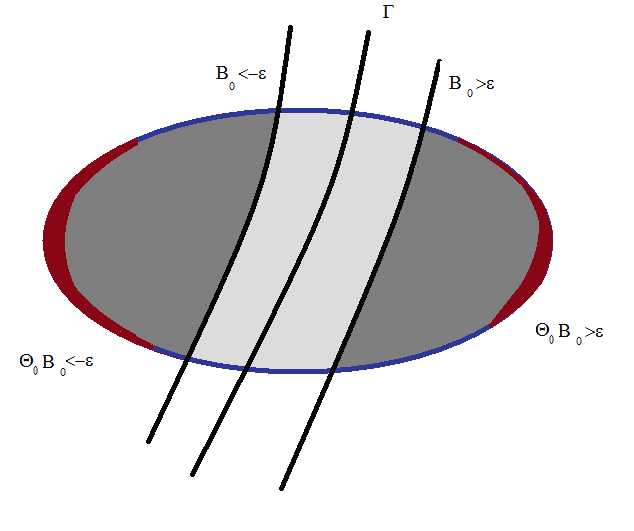} 
\end{center}
\caption{Illustration of Regime~II for $H=b\kappa$ and $b=1/\varepsilon$\,: Superconductivity is  also destroyed on the  boundary parts $\{\Theta_0|B_0(x)|>\varepsilon\}\cap\partial\Omega\,$.}
\end{figure}

\begin{figure}\label{fig3}
\begin{center}
\includegraphics[scale=0.8]{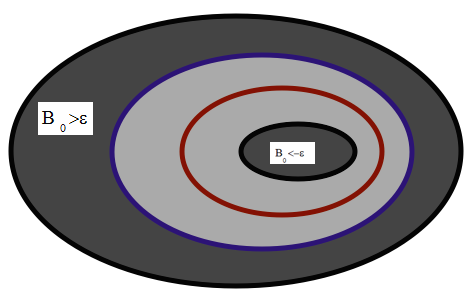} 
\end{center}
\caption{Illustration of Regime~II when $\{B_0=0\}\cap\partial\Omega=\emptyset\,$, $H=b\kappa\,$, $b=1/\varepsilon$ and $\varepsilon$ is small: Superconductivity is   destroyed on the entire  boundary and is concentrated in the set $\{|B_0|<\varepsilon\}$.}
\end{figure}

Loosely speaking, we would like to prove that, for all values of  $b\geq \beta_0^{-1}$,   the density $|\psi|^2$ is exponentially small (in the $L^2$-sense) outside the set $\mathcal V(\frac 1b)\cup\mathcal V^{\rm bnd}(\frac 1b)$. This will lead us to two distinct regimes:\\

{\bf Regime~I:} For  $\beta_0^{-1}< b\leq (\Theta_0\beta_1)^{-1}$, $\mathcal V^{\rm bnd}(\frac 1b)=\partial\Omega$ and $\partial\Omega$ carries surface superconductivity everywhere. This is illustrated in Figure~1.\\

{\bf Regime~II:} For  $b>(\Theta_0\beta_1)^{-1}$, we will get that $\psi$ is exponentially small outside the set $\mathcal V^{\rm bnd}(\frac 1b)$. Here we have two cases:
\begin{itemize}
\item As $b$ increases,  surface superconductivity shrinks to the points of $\{x\in\partial\Omega,~B_0(x)=0\}$, provided that this set is non-empty (cf. Figure~2). 
\item If $\{B_0(x)=0\}\cap\partial\Omega=\emptyset\,$, then, for sufficiently large values of $b$, no surface superconductivity is left (cf. Figure~3). 
\end{itemize}

Regime~II   is consistent with the results of   \cite[Thm. 3.6]{HK} devoted to the complementary regime where $b\gg 1$ as $\kappa \rightarrow +\infty\,$.\\
The results in this paper confirm the   behavior described  in these two regimes and  are valid  under a much weaker  assumption than Assumption~\ref{ass:mf} (cf. Assumption~\ref{ass:mf*} below).

The transition to the normal state is studied in \cite{Att3, Miq, PK}. This happens, when $\kappa$ is large, for $H\sim c_*\kappa^2$ (equivalently $b \sim  c_*\kappa$), where $c_*>0$ is a constant explicitly defined by the domain $\Omega$ and the function $B_0\,$.\\

\subsection{Main results}~

In this paper, we will  first work under the following assumption:
\begin{Ass}\label{ass:mf*}~
\begin{itemize}
\item The function $B_0$ is in $C^{0,\alpha}(\overline\Omega)$ for some $\alpha\in(0,1)$\,;
\item The constants $\beta_0$ and $\beta_1$ in \eqref{eq:ep0} satisfy $\beta_1\geq \beta_0>0\,$.
\end{itemize}
\end{Ass}
Note that this assumption is much weaker than Assumption \ref{ass:mf}.
With the previous notation our main theorem is:
\begin{thm}\label{thm:exp-dec*}
{\bf[Exponential decay outside the  superconductivity region]}\\
Suppose that Assumption~\ref{ass:mf*} holds, that $b>\beta_0^{-1}$  and let  $O$ 
be an open set such that
$\overline{O}\subset \omega\big(\frac1b\big)$\,, 
 where $\omega(\frac 1b)$ is the domain introduced in \eqref{eq:om(l)}

There exist $\kappa_0>0$,  $C>0$ and $ \alpha_0>0$ 
such that, if
$\kappa\geq\kappa_0$ and $(\psi,\Ab)_{\kappa,H}$ is a solution of \eqref{eq:GL} for $H=b\kappa$\,,
then the following  inequality holds
\begin{equation}\label{eq:exp-dec*} 
\| \psi \|_{H^1(O)}\leq C \, e^{-\alpha_0\kappa}\,.
\end{equation}
Furthermore, if $b>(\Theta_0\beta_1)^{-1}$, then the estimate in \eqref{eq:exp-dec*}  holds when the  open set $O$ satisfies 
$$
\overline{O}\subset \Big\{x\in\partial\Omega,~\Theta_0|B_0(x)|>\frac1b\Big\}\cup{\omega}\left(\frac 1b\right) \,.
$$
\end{thm}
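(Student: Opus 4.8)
The plan is to run an Agmon-type exponential decay argument driven by the fact that, on $\overline O$, the effective magnetic field strength $\kappa H|B_0|=b\kappa^2|B_0|$ strictly exceeds the threshold $\kappa^2$ set by the cubic nonlinearity. First I would record the standard a priori information for a solution of \eqref{eq:GL}: the maximum principle $\|\psi\|_{L^\infty(\Omega)}\le1$, the energy bound $\|(\nabla-i\kappa H\Ab)\psi\|_{L^2(\Omega)}^2\le C\kappa^2$, and — crucially — the fact that the induced field is close to the applied one, $\|\curl\Ab-B_0\|_{L^\infty(\Omega')}\to0$ as $\kappa\to\infty$ on a fixed neighborhood $\Omega'$ of $\overline O$. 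The point is that the current on the right-hand side of the second equation in \eqref{eq:GL} carries the prefactor $(\kappa H)^{-1}=(b\kappa^2)^{-1}$, so $\curl\Ab-B_0$ is small, and the H\"older continuity in Assumption~\ref{ass:mf*} suffices to upgrade this to a uniform bound on $\Omega'$.

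Since $\overline O\subset\omega(\tfrac1b)=\{|B_0|>\tfrac1b\}$ is compact, there is $\delta>0$ with $b|B_0|\ge1+2\delta$ on $\overline O$, and, after shrinking $\Omega'$, also on $\Omega'$; moreover $B_0$ keeps a fixed sign on $\Omega'$, because each component of $\omega(\tfrac1b)$ lies entirely in $\Omega_+$ or $\Omega_-$ (it avoids $\Gamma$). Combining with the field approximation gives $b|\curl\Ab|\ge1+\delta$ on $\Omega'$ for $\kappa$ large, and in the interior case I obtain the coercivity
$$
\int_{\Omega'}|(\nabla-i\kappa H\Ab)u|^2\,dx\ \ge\ (1+\delta)\,\kappa^2\int_{\Omega'}|u|^2\,dx
$$
for $u$ supported in $\Omega'$, via the two-dimensional factorization $\pi_1^2+\pi_2^2=(\pi_1\mp i\pi_2)(\pi_1\pm i\pi_2)\pm\curl\Ab$ with $\pi=\nabla-i\kappa H\Ab$, choosing the sign according to that of $\curl\Ab$ so that the right-hand side is $\ge\kappa H|\curl\Ab|\ge(1+\delta)\kappa^2$. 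For the boundary variant ($b>(\Theta_0\beta_1)^{-1}$, with $O$ touching $\{\Theta_0|B_0|>\tfrac1b\}$) the same bound holds but now the factor $(1+\delta)\kappa^2$ is produced by $\Theta_0\,\kappa H|B_0|$; this replaces the pointwise factorization by the local de\,Gennes half-plane model, whose bottom of spectrum is $\Theta_0\,\kappa H|B_0|$, obtained by freezing the (merely H\"older) field on small balls, controlling the error by its modulus of continuity, and summing via a partition of unity, the Neumann condition $\nu\cdot(\nabla-i\kappa H\Ab)\psi=0$ being exactly what the model operator carries.

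With coercivity in hand I choose a Lipschitz weight $\Phi$ supported in $\Omega'$, truncated from $\alpha_0\kappa\,\dist(\cdot,\partial\Omega')$ so that $|\nabla\Phi|\le\alpha_0\kappa$ and $\Phi\ge c\,\alpha_0\kappa$ on $O$, together with a cutoff $\chi\in C_c^\infty(\Omega')$ equal to $1$ near $\overline O$. Testing the first equation against $e^{2\Phi}\overline\psi$ and using the Agmon identity, with $|\psi|\le1$, gives
$$
\int|(\nabla-i\kappa H\Ab)(e^{\Phi}\psi)|^2\,dx\ \le\ \int|\nabla\Phi|^2 e^{2\Phi}|\psi|^2\,dx+\kappa^2\int e^{2\Phi}|\psi|^2\,dx.
$$
Applying the coercivity to $\chi e^{\Phi}\psi$ (paying an IMS error $\int|\nabla\chi|^2e^{2\Phi}|\psi|^2$, supported in the collar where $\Phi$ is bounded), the leading term $(1+\delta)\kappa^2\int\chi^2e^{2\Phi}|\psi|^2$ absorbs both $\alpha_0^2\kappa^2$ from $|\nabla\Phi|^2$ and $\kappa^2$ from the nonlinearity, provided $\alpha_0^2<\delta$. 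What survives is $C\kappa^2\int_{\mathrm{collar}}|\psi|^2\le C\kappa^2|\Omega|$, whence $\int_{\Omega'}e^{2\Phi}|\psi|^2\le C$ and thus $\int_O|\psi|^2\le Ce^{-2c\alpha_0\kappa}$. The same identity controls $\int_Oe^{2\Phi}|(\nabla-i\kappa H\Ab)\psi|^2$, and converting $(\nabla-i\kappa H\Ab)\psi$ into $\nabla\psi$ costs only the polynomially large factor $\kappa H\|\Ab\|_{L^\infty}$, harmless after a slight reduction of $\alpha_0$; this yields \eqref{eq:exp-dec*}.

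I expect the main obstacle to be the coercivity step in the boundary regime: securing the sharp constant $\Theta_0\,\kappa H|B_0|$ uniformly while the field is only H\"older continuous and is known only through the approximation $\curl\Ab\approx B_0$. The interior factorization is robust since it is pointwise and uses nothing but the sign of the field, but near $\partial\Omega$ one must compare with the de\,Gennes model, and the low regularity forces a careful frozen-coefficient and partition-of-unity scheme whose errors have to be beaten by the spectral gap $\delta\kappa^2$.
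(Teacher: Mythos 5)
Your overall architecture (coercivity of the kinetic term on a neighbourhood of $\overline O$, then an Agmon weight $e^{\Phi}$ with $|\nabla\Phi|\le\alpha_0\kappa$, IMS localization, absorption of $\alpha_0^2\kappa^2+\kappa^2$ by $(1+\delta)\kappa^2$) is exactly the scheme of the paper (Theorem~\ref{thm:exp-dec} via Proposition~\ref{lem:qf}). The gap is in how you produce the coercivity. Your interior argument rests on the claim that $\|\curl\Ab-B_0\|_{L^\infty(\Omega')}\to0$, "upgraded" from the second equation in \eqref{eq:GL} using the H\"older continuity of $B_0$. This upgrade is not available here. The second equation together with the energy bound $\|(\nabla-i\kappa H\Ab)\psi\|_{L^2}\le C\kappa$ gives $\|\nabla(\curl\Ab-B_0)\|_{L^2}=O(1/\kappa)$, hence (with the boundary condition $\curl\Ab=B_0$ on $\partial\Omega$) smallness of $\curl\Ab-B_0$ in $H^1$ and thus in every $L^p$ with $p<\infty$, but \emph{not} in $L^\infty$ in two dimensions. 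The usual route to $L^\infty$ passes through the a priori bound $\|(\nabla-i\kappa H\Ab)\psi\|_{L^\infty}\le C\kappa$, which the paper explicitly refuses to use and which, as stated in the introduction, has not been established when $B_0$ is only $C^{0,\alpha}$. If you instead feed an $L^p$ bound into your factorization identity, the error term $\kappa H\int|\curl\Ab-B_0|\,|u|^2$ is controlled by $\kappa H\,\|\curl\Ab-B_0\|_{L^p}\,\|u\|_{L^{2p'}}^2$, and $\|u\|_{L^{2p'}}^2$ is not dominated by $\|u\|_{L^2}^2$ for the exponentially weighted test function $u=\chi e^{\Phi}\psi$; the absorption does not close. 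The same difficulty, compounded, sits inside your boundary step, which you only sketch: "freezing the field" there requires a quantitative comparison of the \emph{vector potential} $\Ab$ with a constant-field potential on each small ball, not of $\curl\Ab$ with $B_0$.

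The paper's Proposition~\ref{lem:qf} is built precisely to avoid this: one never touches $\curl\Ab$. Instead one uses Proposition~\ref{lem:Att}, $\|\Ab-\Fb\|_{C^{0,\gamma}(\overline\Omega)}\le C/\kappa$, together with the local gauge of Lemma~\ref{lem:Gauge-Att}, $|\Fb-B_0(a_j)\Ab_0(\cdot-a_j)-\nabla\varphi_{a_j}|\le C\ell^{1+\alpha}$ on balls of radius $\ell=\kappa^{-\rho}$, and then applies the exact constant-field spectral bounds (Lemma~\ref{lem:ml*} for the Dirichlet/interior case, Lemma~\ref{lem:ml} for the Neumann/boundary case) on each ball of a partition of unity; the competing errors are optimized to $O(\kappa^{-\sigma(\alpha)})$ with $\sigma(\alpha)=\tfrac{2\alpha}{3+\alpha}$. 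Your pointwise factorization would be a genuinely simpler interior argument \emph{if} the $L^\infty$ control of $\curl\Ab-B_0$ were in hand, but under Assumption~\ref{ass:mf*} it is not, and in the boundary regime no pointwise substitute exists, so the frozen-coefficient, gauge-based machinery is not optional. As written, your proof has a gap at its central coercivity step.
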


The proof of Theorem~\ref{thm:exp-dec*} follows from the stronger conclusion of   Theorem~\ref{thm:exp-dec}, establishing Agmon like estimates.

\begin{rem}\label{rem:B0pm}{\bf[Sign-changing magnetic fields]}~\\
In addition to Assumption~\ref{ass:mf*}, suppose that $\omp$ and $\omm$ 
are {\bf non-empty}. The constant $\beta_0$ in \eqref{eq:ep0} can be expressed as follows
$$\beta_0=\max(\beta_0^+,\beta_0^-)\quad{\rm where~}\beta_0^\pm=\sup_{x\in\overline{\Omega_\pm}}|B_0(x)|\,.$$
We will discuss the conclusion of Theorem~\ref{thm:exp-dec*} when $\beta_0^+<\beta_0^-\,$.  We have:
\begin{itemize}
\item If $ (\beta_0)^{-1}<b<(\beta_0^+)^{-1}$, then $\omega(\frac1b)\cap \Omega_+=\emptyset\,$. Consequently, the exponential decay occurs in $\omega(\frac1b)\cap\omm\,$.
\item If $(\beta_0^+)^{-1}\leq b\,$, then the exponential decay occurs in both $  \omega(\frac1b)\cap\omp$ and $ \omega(\frac1b)\cap\omm\,$.
\end{itemize}
The situation when $\beta_0^-<\beta_0^+$ can be discussed similarly. 
Next, we suppose that the two sets
$$\pomp:=\{x\in\partial\Omega~,~B_0(x)>0\}\quad{\rm and} \quad\pomm:=\{x\in\partial\Omega~,~B_0(x)<0\}$$ 
are non-empty, and we express the constant $\beta_1$ in \eqref{eq:ep0} as follows
$$\beta_1=\max(\beta_1^+,\beta_1^-)\quad{\rm where~}\beta_1^\pm=\sup_{x\in\overline{(\partial\Omega)_\pm}}|B_0(x)|\,.$$
According to Theorem~\ref{thm:exp-dec*}, when $\beta_1^+<\beta_1^-$ and $(\beta_1)^{-1}<b<(\beta_1^+)^{-1}\,$, then the exponential decay occurs on $\{x\in\partial\Omega,~\Theta_0b|B_0(x)|>1\}\cap\pomm\,$, since 
$\{x\in\partial\Omega,~\Theta_0 b|B_0(x)|>1\}\cap \pomp=\emptyset\,$. 
\end{rem}
Our next result discusses the optimality of   Theorem~\ref{thm:exp-dec*}. This theorem determines a part of the boundary  where  the order parameter (the first component $\psi$ of the minimizer) is  exponentially small. Outside this part of the boundary, we will prove that the $L^4$ norm of the order parameter  is not exponentially  small. In physical terms,  superconductivity is present there.   

The statement of Theorem~\ref{thm:surface-sc} involves the following notation:
\begin{itemize}
\item For all $t>0$, $\widetilde\Omega(t)=\{x\in\R^2~:~{\rm dist}(x,\partial\Omega)<t\}\,$.
\item By smoothness of $\partial\Omega\,$, there exists a geometric constant $t_0$ such that, 
for all $x\in\widetilde\Omega(t_0)\,$, we may assign a unique point
% such that ${\rm dist}(x,\partial\Omega)$ is sufficiently small, 
 $p(x)\in\partial\Omega$  such that ${\rm dist}(p(x),x)= {\rm dist}(x,\partial\Omega)$.
\item If $b>0$, we define the open subset in $\R^2$
\begin{equation}\label{eq:1.15*}
\widetilde\Omega(t_0,b)=\{x\in\widetilde\Omega(t_0)~:~ 1< b|B_0(p(x))|<\Theta_0^{-1}\}\,.
\end{equation}
%$x_0\in\partial\Omega$ and $\ell>0\,$ is sufficiently small, 
%\begin{equation}\label{eq:defV}V_{x_0}(\ell)=\Omega \cap \{x\in\overline\Omega\,,~\max\big({\rm dist}_{\partial\Omega}(p(x),x),{\rm dist}(x,\partial\Omega)\big)<\ell\}\,.\end{equation}
\item $E_{\rm surf}:[1,\Theta_0^{-1})\to (-\infty,0)$  is the surface energy function which will be defined in \eqref{eq:Pa02} later. This function is continuous and non-decreasing.% (which will be analyzed in Subsection \ref{ss4.1})\,.
\item  If $\widetilde\Omega(t_0,b)\not=\emptyset\,$, we define the following distribution in $\mathcal D'\big(\widetilde\Omega(t_0,b)\big)$:
%\begin{equation}\label{eq:1.15n*}
%T_{\psi}(\varphi)=\int_{\widetilde\Omega(t_0,b)\cap\Omega}|\psi|^4\varphi\,dx\,,
%\end{equation}
\begin{equation}
C_c^\infty\big(\widetilde\Omega(t_0,b)\big)\ni\varphi\mapsto\mathcal T_b(\varphi)=-2\int_{\widetilde\Omega(t_0,b)\cap\partial\Omega}\sqrt{\frac1{b|B_0(x)|}}\Es\big(b|B_0(x)|\big)\,\varphi(x)\,ds(x)\,,
\end{equation}
where $ds$ is the surface measure on $\partial\Omega$.
\item If $D\subset\overline\Omega\,$, we introduce the local Ginzburg-Landau energy in $D$ as follows
\begin{equation} \label{eq:1.15n }
\mathcal E(\psi,\Ab;D)=\displaystyle\int_{D}
\Big(|(\nabla-i\kappa H\Ab)\psi|^2-\kappa^2|\psi|^2+\frac{\kappa^2}2 |\psi(x)|^4\Big)\,dx\,.
\end{equation}
\item $\mathbf 1_\Omega$ denotes the characteristic function of the set $\Omega\,$.
\end{itemize}

\begin{thm}\label{thm:surface-sc}{\bf [Existence of surface superconductivity]}~

Suppose that Assumption~\ref{ass:mf*} holds, that $b>\beta_0^{-1}$ and that $\widetilde\Omega(t_0,b)\not=\emptyset$, where $\beta_0$ is the constant introduced in
\eqref{eq:ep0}. 
If $(\psi,\Ab)_{\kappa,H}$ is a minimizer of the functional in \eqref{eq-3D-GLf} for $H=b\kappa\,$, then as $\kappa\to\infty\,$,  we have the following weak convergence\footnote{ A distribution $T_\kappa$ converges weakly to a distribution $T$ in $\mathcal D'(U)$ if, for all $\varphi\in C_c^\infty(U)$, $T_\kappa(\varphi)\to T(\varphi)$.} 
\begin{equation}\label{eq:1.13*}
\kappa\mathbf 1_\Omega |\psi_{\kappa,H}|^4\rightharpoonup \mathcal T_b\quad{\rm in~}\mathcal D'\big(\widetilde\Omega(t_0,b)\big)\,.
\end{equation}
\end{thm}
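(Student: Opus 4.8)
\emph{Strategy.} The plan is to trade the $L^4$ density for a local Ginzburg--Landau energy through the Euler--Lagrange equations, and then to evaluate that energy by matching it, arc by arc along $\partial\Om$, to the half-plane model that defines $\Es$. Fix $\varphi\in C_c^\infty\big(\widetilde\Omega(t_0,b)\big)$ and abbreviate $D_A=\nabla-i\kp H\Ab$. Testing the first equation of \eqref{eq:GL} against $\varphi\bar\psi$ and integrating by parts (the boundary term drops by the Neumann condition $\nu\cdot D_A\psi=0$) gives
\[
\int_\Om\varphi\,|D_A\psi|^2\,dx+\mathrm{Re}\int_\Om\bar\psi\,(\nabla\varphi)\cdot D_A\psi\,dx=\kp^2\int_\Om\varphi\,|\psi|^2\,dx-\kp^2\int_\Om\varphi\,|\psi|^4\,dx\,.
\]
Substituting this into the $\varphi$-weighted local energy $\mathcal E_\varphi:=\int_\Om\varphi\big(|D_A\psi|^2-\kp^2|\psi|^2+\tfrac{\kp^2}2|\psi|^4\big)\,dx$ (the weighted analogue of the local energy introduced above) yields the exact identity
\[
\kp\int_\Om\varphi\,|\psi|^4\,dx=-\frac2\kp\,\mathcal E_\varphi-\frac2\kp\,\mathrm{Re}\int_\Om\bar\psi\,(\nabla\varphi)\cdot D_A\psi\,dx\,.
\]
The theorem thus reduces to showing that $\tfrac1\kp\mathcal E_\varphi$ converges to $\int_{\widetilde\Omega(t_0,b)\cap\partial\Om}\sqrt{\tfrac1{b|B_0(x)|}}\,\Es\big(b|B_0(x)|\big)\,\varphi(x)\,ds(x)$ and that the last remainder is $o(1)$.

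\emph{Remainder and localization.} Since $\mathrm{supp}\,\varphi\subset\widetilde\Omega(t_0,b)$, every interior point of the support satisfies $b|B_0|>1$, hence lies in $\omega(\tfrac1b)$; by Theorem~\ref{thm:exp-dec*} the order parameter is exponentially small there, so all the mass of $\psi$ sits in an $O(1/\kp)$ boundary layer. Combined with the maximum principle bound $\|\psi\|_\infty\le1$ and the energy bound $\|D_A\psi\|_{L^2(\Om)}\lesssim\kp$ (both standard for solutions of \eqref{eq:GL}), this gives $\|\psi\|_{L^2(\mathrm{supp}\,\nabla\varphi)}\lesssim\kp^{-1/2}$, so Cauchy--Schwarz bounds the remainder by $\tfrac1\kp\|\psi\|_{L^2(\mathrm{supp}\,\nabla\varphi)}\|D_A\psi\|_{L^2}\lesssim\kp^{-1/2}=o(1)$. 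The same decay lets me restrict all energy computations to the boundary layer up to exponentially small errors.

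\emph{Surface energy asymptotics.} This is the crux. I would first establish, for a fixed boundary neighborhood $\mathcal N\supset\mathrm{supp}\,\varphi$, the global surface asymptotics $\tfrac1\kp\mathcal E(\psi,\Ab;\mathcal N)\to\int_{\partial\Om\cap\mathcal N}\sqrt{\tfrac1{b|B_0|}}\,\Es(b|B_0|)\,ds$; note that the bulk density $g(b|B_0|)$ vanishes here because $b|B_0|>1$, so no bulk term survives and the genuinely new content is the $\kp$-order surface term. Cover $\partial\Om\cap\mathcal N$ by arcs of tangential length $\kp^{-\rho}$ ($0<\rho<1$), pass to boundary normal coordinates, and on each arc freeze $B_0$ at $B_0(x_j)$, the error being $|B_0-B_0(x_j)|\lesssim\kp^{-\rho\alpha}$ by the H\"older regularity of Assumption~\ref{ass:mf*}. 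After the blow-up $x=x_j+y/\kp$ the magnetic operator becomes the constant-field half-plane operator of intensity $b|B_0(x_j)|\in(1,\Theta_0^{-1})$, and the scale-invariant rescaled energy converges to the reduced half-plane energy, the tangential magnetic-length rescaling producing the Jacobian $\sqrt{1/(b|B_0(x_j)|)}$ that converts the model energy to the normalization of $\Es$. For the upper bound I would insert the optimal half-plane profiles glued by a partition of unity; for the lower bound I would use an IMS localization together with the half-plane lower bound and the continuity and monotonicity of $\Es$ on $[1,\Theta_0^{-1})$ to absorb the frozen-coefficient and curvature errors. Summing the arcs and passing to the Riemann sum produces the boundary integral. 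The corresponding local lower bound $\liminf\tfrac1\kp\mathcal E_\varphi\ge\int_{\partial\Om\cap\widetilde\Omega(t_0,b)}\sqrt{\tfrac1{b|B_0|}}\Es(b|B_0|)\varphi\,ds$ holds by the same localization for every $\varphi\ge0$; applying it both to $\varphi$ and to $\chi-\varphi$, where $\chi$ is a cutoff equal to $1$ on $\mathrm{supp}\,\varphi$ and supported in $\mathcal N$, upgrades the lower bound to the desired convergence of $\tfrac1\kp\mathcal E_\varphi$. Splitting a general $\varphi$ into positive and negative parts and inserting into the identity of the first step then yields \eqref{eq:1.13*}.

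\emph{Main obstacle.} The hard part is the lower bound in the surface energy asymptotics: one must rule out that the minimizer extracts extra energy from profiles that are not of boundary-layer type, which requires combining the bulk exponential decay of Theorem~\ref{thm:exp-dec*} (to discard the region $b|B_0|>1$) with a lower bound for the frozen half-plane functional that is uniform in $x_j$, together with a careful bookkeeping of the curvature and field-variation errors accumulated across the $\kp^{-\rho\alpha}$ scale. The induced magnetic field is handled, as usual, by replacing $\Ab$ with a potential generating $B_0$ exactly, the correction being controlled through the second equation of \eqref{eq:GL}.
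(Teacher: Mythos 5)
Your proposal is correct and follows essentially the same route as the paper: the identity obtained by testing the first Ginzburg--Landau equation against $\varphi\bar\psi$ is the $\varphi$-weighted version of the paper's \eqref{eq:decomp*}, and the evaluation of the resulting local energy (exponential decay to localize to the boundary layer, freezing $B_0$ on arcs of length $\kappa^{-\rho}$ with a local gauge, boundary coordinates, matching to the half-plane functional defining $\Es$ via its superadditivity for the lower bound and a glued trial state agreeing with $\psi$ outside for the upper bound) is exactly the content of Lemma~\ref{lem:est-E0} and Theorems~\ref{thm:ub-l4}--\ref{thm:lb-psi}. The only difference is organizational: the paper proves uniform two-sided estimates on each window $V_{x_0}(\kappa^{-\rho})$ and then integrates against $\varphi$, whereas you carry $\varphi$ through the whole argument and recover the matching upper bound by the $\chi-\varphi$ trick.
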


\begin{rem}\label{rem:lb-psi}
Theorem~\ref{thm:surface-sc} demonstrates the existence of surface superconductivity.
 %in the vicinity of the boundary point $x_0$. 
 We can interpret the assumption in Theorem~\ref{thm:surface-sc} in two different ways.
\begin{itemize}
\item If $H=b\kappa$, $b>0$  is fixed and $x_0\in\partial\Omega$, then to find superconductivity near $x_0$, this point should satisfy $1< b|B_0(x_0)|<\Theta_0^{-1}$.
\item If $x_0\in\partial\Omega$ is fixed and $|B_0(x_0)|$ is small, then to find superconductivity near $x_0$, the intensity of the applied magnetic field should be increased in such a manner that $H=b\kappa$ and $1< b|B_0(x_0)|<\Theta_0^{-1}$.
\end{itemize}
\end{rem}

Our last result confirms that the region $\{B_0(x)<\frac\kappa H\}$ carries superconductivity everywhere. To state it, we will use the following notation:
\begin{itemize}
\item If $p,q\in\partial\Omega$, ${\rm dist}_{\partial\Omega}(p,q)$ denotes the (arc-length) distance in $\partial\Omega$ between $p$ and $q$\,.
\item For $x_0\in\R^2$ and $r>0\,$,   we denote by $Q_r(x_0) = x_0 + (-r/2,r/2)^2$  the interior of the square of center $x_0$ and side $r$.  When $x_0=0$, we write $Q_r=Q_r(0)$.
\item 
For $(x,\ell)\in\overline\Omega\times(0,t_0/2)$, we will use the following notation:
\begin{equation}\label{defW}
\mathcal W(x_0,\ell)=
\left\{\begin{array}{cl}
\{x\in\overline\Omega~:~{\rm dist}_{\partial\Omega}(p(x),x_0)<\ell~\mbox{ and }~{\rm dist}(x,\partial\Omega)<2\ell\}&{\rm ~if~}x_0\in\partial\Omega\,,\\
Q_{2\ell}(x_0)&{\rm ~if~}x_0\in\Omega\,.
\end{array}\right.
\end{equation}
\end{itemize}
%{where  $\clr Q_{x_0}(\ell) = x_0 + (-\ell,\ell)^2$ is the interior of the square of center $x_0$ and side $2\ell$.}
%{\clr Bernard:  Homog\'en\'eiser la notation avec plus loin $Q_r$.\\}

\begin{thm}\label{thm:blk-sc*}{\bf[The superconductivity region]}~

Suppose that Assumption~\ref{ass:mf*} holds for some $\alpha\in(0,1)$, $b>0$  and
$\frac{2}{2+\alpha}<\rho<1$ be two constants. Let $x_0\in\overline\Omega$ such that $|B_0(x_0)|<\frac1b$.

There exist $\kappa_0>0$,  a function $ {\rm r}:[\kappa_0,+\infty)\to\R_+$ 
such that $\lim_{\kappa\to +\infty}{\rm r}(\kappa)=0$ and, 
for all $\kappa\geq\kappa_0$ and for all critical point  $(\psi,\Ab)_{\kappa,H}$ of the functional in \eqref{eq-3D-GLf} with $H=b\kappa$\,, 
the following  two inequalities hold,
$$
\left|\frac1{|\mathcal W(x_0,\kappa^{-\rho})|}\int_{\mathcal W_{x_0}(\kappa^{-\rho})}|\psi(x)|^4\,dx+2g\big(b|B_0(x_0)|\big)\right|\leq {\rm r}(\kappa)
$$
and
$$
\Big|\mathcal E\Big(\psi,\Ab;\mathcal W(x_0,\kappa^{-\rho})\Big)-\kappa^2g\big(b|B_0(x_0)|\big)\Big|\leq \kappa^2{\rm r}(\kappa)\,.
$$
Here $ g(\cdot)$ is the continuous function appearing in \eqref{eq:Att} and \eqref{eq:Attbis}
(see  Subsection \ref{ss2.1} for its definition and properties).
\end{thm}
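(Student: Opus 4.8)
The plan is to compare, on the small window $\mathcal W=\mathcal W(x_0,\kappa^{-\rho})$, the localized Ginzburg--Landau energy with the constant-field model problem that defines $g$. Write $\ell=\kappa^{-\rho}$ and $\be_0:=b|B_0(x_0)|<1$. Since $B_0\in C^{0,\alpha}$, on $\mathcal W$ one has $|B_0(x)-B_0(x_0)|\le C\ell^{\alpha}$, so at leading order the effective field $\kappa HB_0=b\kappa^2B_0$ is the constant $b\kappa^2B_0(x_0)$. The microscopic scale is the magnetic length $\kappa^{-1}$, and after the blow-up $x=x_0+\kappa^{-1}y$, $u(y)=\psi(x_0+\kappa^{-1}y)$, the window becomes a square (or half-square) $Q_R$ with $R\sim\kappa^{1-\rho}\to\infty$, the effective field becomes the constant $\be_0$, and
\[
\mathcal E(\psi,\Ab;\mathcal W)=\int_{Q_R}\Big(|(\nabla_y-i\widetilde{\Ab})u|^2-|u|^2+\tfrac12|u|^4\Big)\,dy+(\text{frozen-coefficient errors}),
\]
with $\curl\widetilde{\Ab}=\be_0+O(\ell^\alpha)$. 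The infimum of the model energy on $Q_R$ is $g(\be_0)R^2$, and since $R^2\sim\kappa^2|\mathcal W|$ the target is the two-sided estimate $\mathcal E(\psi,\Ab;\mathcal W)=\kappa^2|\mathcal W|\,g(\be_0)\,(1+o(1))$, the natural local form of \eqref{eq:Att}. Throughout I would use the standard a priori bounds for solutions of \eqref{eq:GL}: $\|\psi\|_\infty\le1$, $\|(\nabla-i\kappa H\Ab)\psi\|_\infty\lesssim\kappa$, and $\|\curl\Ab-B_0\|\lesssim\kappa^{-1}$ (from the second equation of \eqref{eq:GL}), which make this freezing rigorous.

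Before the comparison I would record the \emph{virial identity} linking the two displayed quantities. Testing the first equation of \eqref{eq:GL} with $\overline{\psi}$ on a subdomain $D$ and integrating by parts gives
\[
\mathcal E(\psi,\Ab;D)=-\frac{\kappa^2}{2}\int_{D}|\psi|^4\,dx+\mathrm{Re}\int_{\partial D}\overline{\psi}\,\nu\cdot(\nabla-i\kappa H\Ab)\psi\,ds ,
\]
valid for \emph{every} solution. For $D=\Om$ the boundary term vanishes by the Neumann condition; for $D=\mathcal W$ the bound $\|(\nabla-i\kappa H\Ab)\psi\|_\infty\lesssim\kappa$ gives a boundary contribution $\lesssim\kappa\,|\partial\mathcal W|\sim\kappa^{1-\rho}$, which is $o(\kappa^2|\mathcal W|)=o(\kappa^{2-2\rho})$ since $\rho<1$; averaging the identity over $\ell\in[\kappa^{-\rho},2\kappa^{-\rho}]$ trades it for a harmless volume integral if a cleaner bound is wanted. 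Consequently the $L^4$ statement and the energy statement are equivalent, and it suffices to prove the two-sided energy estimate.

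For the \textbf{lower bound} I would use the variational characterization of $g$: after the blow-up, a gauge transformation replaces $\widetilde{\Ab}$ by the constant-field potential $\tfrac{\be_0}{2}(-y_2,y_1)$ up to an error controlled by $\|\curl\Ab-B_0\|$ and the H\"older modulus, and the energy of $u$ on $Q_R$ is then bounded below by the model infimum $g(\be_0)R^2$; continuity of $g$ and of $B_0$ let one evaluate at $x_0$ with an $o(1)$ error. This step uses nothing beyond the a priori bounds and therefore holds for any solution. For the \textbf{matching upper bound} I would invoke global minimality: replacing $\psi$ inside $\mathcal W$ by a cutoff of the (rescaled) model minimizer while keeping $(\psi,\Ab)$ unchanged outside produces an admissible competitor whose energy differs only on $\mathcal W$, so minimality yields $\mathcal E(\psi,\Ab;\mathcal W)\le\kappa^2|\mathcal W|g(\be_0)+o(\kappa^2|\mathcal W|)$, the gluing correction living on a boundary layer of width $\kappa^{-1}$ and being of size $\kappa^{1-\rho}\ll\kappa^{2-2\rho}$. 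For $x_0\in\partial\Om$ one works in tubular coordinates; the surface energy on such a window is only $O(\kappa^{1-\rho})$ and hence negligible against the bulk term $\kappa^{2-2\rho}$, so the leading density is still $g(\be_0)$.

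The hard part is twofold. First, the localization must be quantitative at the scale of the window: unlike the global asymptotics \eqref{eq:Att}--\eqref{eq:Attbis}, whose $o(\kappa^2)$ error would swamp the local main term $\kappa^{2-2\rho}$, here every error (H\"older variation of $B_0$, the deviation $\curl\Ab-B_0$, the gauge and gluing corrections, the virial boundary term) must be shown to be $o(\kappa^{2-2\rho})$ uniformly in $x_0$; the competition between these errors and the bulk term $\kappa^{2-2\rho}$ is precisely what singles out the admissible range $\tfrac{2}{2+\alpha}<\rho<1$, the upper restriction being needed so that $R=\kappa^{1-\rho}\to\infty$ and the thermodynamic limit defining $g$ applies. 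Second, the stated estimates are sharp two-sided identities, and the upper bound on the energy (equivalently the lower bound on $\int_{\mathcal W}|\psi|^4$) genuinely uses that $(\psi,\Ab)$ minimizes: the normal configuration $\psi\equiv0$ is a critical point of \eqref{eq-3D-GLf} for which $\int_{\mathcal W}|\psi|^4=0\ne-2|\mathcal W|g(\be_0)>0$, so for a general critical point only the lower energy bound and the upper $L^4$ bound survive, and it is the minimizing property (or some device excluding such degenerate states) that closes the estimate.
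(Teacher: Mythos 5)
Your architecture is essentially the paper's: freeze $B_0$ at $x_0$ using the gauge construction of Lemma~\ref{lem:Gauge-Att} together with $\|\Ab-\Fb\|_{C^{0,\gamma}}\leq C/\kappa$, blow up to the model problem on $Q_R$ with $R\sim\sqrt{\kappa H}\,\ell\to\infty$ and invoke \eqref{eq:g(b)*}; the lower energy bound (upper $L^4$ bound) then holds for any solution, and the matching upper energy bound comes from gluing a rescaled model minimizer to $\psi$ outside the window and using global minimality. Your closing remark is also correct and consistent with the paper's proof: Steps~2, 4 and 5 really do use $\mathcal E(\psi,\Ab)\leq\min\big(0,\mathcal E(u,\Ab)\big)$, the normal state $(0,\Fb)$ is a critical point of \eqref{eq:GL} for which the first display fails, so the two-sided statement genuinely requires a minimizer and the word ``critical point'' in the statement is too generous.

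The one genuine gap is your systematic reliance on $\|(\nabla-i\kappa H\Ab)\psi\|_\infty\lesssim\kappa$ to control the boundary term in the virial identity on $D=\mathcal W$. That bound is precisely the a priori elliptic estimate of \cite[Ch.~11]{FH-b}, and the paper states explicitly that it has not been established when $B_0$ is only H\"older continuous --- avoiding it is announced as one of the points of the whole paper. The paper's substitute is structural: one integrates the first equation of \eqref{eq:GL} against $f^2\overline\psi$ for a cutoff $f$ equal to $1$ on $\mathcal W(x_0,\ell)$ and supported in $\mathcal W(x_0,(1+\sigma)\ell)$, so the identity \eqref{eq:decomp*} carries no boundary term at all, and the only price is $\int|\nabla f|^2|\psi|^2\leq C\sigma^{-1}$, absorbed by the choice $\sigma=\kappa^{(\rho-1)/2}$. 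Your fallback of averaging the window radius does not obviously rescue the argument: with only the global bound $\int_\Omega|(\nabla-i\kappa H\Ab)\psi|^2\leq\kappa^2\|\psi\|_2^2$ available, the averaged boundary contribution is $O(\kappa)$, which is $o(\kappa^{2-2\rho})$ only for $\rho<\tfrac12$ and so misses part of the admissible range $\tfrac{2}{2+\alpha}<\rho<1$; obtaining the needed local kinetic bound essentially forces you back to the cutoff identity. Two smaller omissions: you do not treat the degenerate case $B_0(x_0)=0$ (where the blow-up to a fixed-field model breaks down and the paper argues directly with $g(0)=-\tfrac12$), and you never verify that the gauge/H\"older error $\kappa^2\ell^{2+\alpha}$ is exactly what produces the threshold $\rho>\tfrac{2}{2+\alpha}$, although you correctly flag this bookkeeping as the crux.
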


The result in Theorem~\ref{thm:blk-sc*} is a variant of  the formula in \eqref{eq:Attbis} valid  for applied magnetic fields which are only H\"older continuous, thereby  generalizing the results by Attar \cite{Att} and Sandier-Serfaty \cite{SS02}. This will be clarified further in Remark~\ref{rem:blk-sc'}.

\begin{rem}\label{rem:blk-sca}
Let us choose fixed constants $\gamma$ and $\rho$ such that $\frac{2}{2+\alpha}<\rho<1$ and $0<\gamma<1-\rho$.
Our proof of Theorem~\ref{thm:blk-sc*} yields that  the constant $\kappa_0$ and the  function $r(\kappa)$ in Theorem~\ref{thm:blk-sc*} can be selected
independently of the point $x_0$ provided that
\begin{itemize}
\item $\kappa^{-2\gamma} \leq  b|B_0(x_0)|<1$\,;
\item $x_0\in\partial\Omega$ or ${\rm dist}(x_0,\partial\Omega)\geq 4\kappa^{-\rho}$\,.
\end{itemize} 
The condition ${\rm dist}(x_0,\partial\Omega)\geq 4\kappa^{-\rho}$ ensures that $Q_{2\kappa^{-\rho}}(x_0)\subset\overline{\Omega}$, which is needed in the proof of Theorem~\ref{thm:blk-sc*}.
\end{rem}

\begin{rem}\label{rem:blk-sc'}
Let $\gamma \in (0, \frac{\alpha}{2+ \alpha})$. If we assume furthermore the following geometric condition
\begin{equation}\label{eq:cond-B0=0}
\big|\{x\in\overline\Omega~,~|B_0(x)|\leq \kappa^{-2\gamma}\} \big|=o(1)\quad(\kappa\to\infty)\,,
\end{equation}
then  Theorem~\ref{thm:blk-sc*} implies the weak convergence
$$ |\psi_{\kappa,H} (\cdot) |^4\rightharpoonup  -2g\big(b|B_0(\cdot )|\big)\quad{\rm in~}\mathcal D'(\Omega)\,.$$ 
In \eqref{eq:cond-B0=0}, we have used the following notation. If $E\subset\R^2$, $|E|$ denotes the Lebesgue (area) measure of $E$.
Note that the condition in \eqref{eq:cond-B0=0} holds under Assumption~\ref{ass:mf} considered in \cite{Att}.
\end{rem}

%{\clb discuter les consequences du resultat lorsque $\Gamma$ est une courbe ou en dehors de $\Gamma$}.
%Theorems~\ref{thm:surface-sc} and \ref{thm:blk-sc} will follow from the stronger conclusions  obtained   in Theorems~\ref{thm:ub-l4}, \ref{thm:lb-psi} and \ref{thm:blk-sc*}, that we will present later.

The rest of the paper is organized as follows. In Section~\ref{sec:pre}, we collect various results that will be used throughout the paper. Section~\ref{sec:exp-dec} is devoted to the proof of Theorem~\ref{thm:exp-dec*}. In Section~\ref{sec:surf}, we present the proof of Theorem~\ref{thm:surface-sc}. Finally, we prove Theorem~\ref{thm:blk-sc*} in Section~\ref{sec:blk}. 

 In the proofs, we avoid the use of the {\it a priori} elliptic $L^\infty$-estimates, whose derivation is quite complicated (cf. \cite[Ch.~11]{FH-b}), thereby providing  new proofs for the results in \cite{Pa02, SS02}. To our knowledge, 
 %Moreover, 
these $L^\infty$-estimates have not been established 
% to our knowledge 
when the magnetic field $B_0$ is only H\"older continuous.
% novelty in our proof is that we do not use the $L^\infty$ elliptic estimates.

\section{Preliminaries}\label{sec:pre}

\subsection{The bulk energy function}\label{ss2.1}

The energy function $g(\cdot)\,$, hereafter called the bulk energy, has been  constructed  in \cite{SS02}. We will recall its construction here. It plays a central role in the study of `bulk' superconductivity, both for two and three dimensional problems (cf. \cite{FKP-jmpa, FK-cpde}). Furthermore, it is related to the periodic solutions of \eqref{eq:GL} and the  Abrikosov energy  (cf. \cite{AS, FK-am}).

For $b\in\,(0,+\infty)\,$, $r>0\,$, and
$Q_r=\,(-r/2,r/2)\,\times\,(-r/2,r/2)$\,, we define the functional,
\begin{equation}\label{eq:rGL}
F_{b,Q_r}(u)=\int_{Q_r}\left(b|(\nabla-i\Ab_0)u|^2-|u|^2+\frac{1}2|u|^4\right)\,dx\,,
\quad \mbox{ for } u\in H^1(Q_r)\,.
\end{equation}
Here, $\Ab_0$ is the magnetic potential,
\begin{equation}\label{eq:A0}
\Ab_0(x)=\frac12(-x_2,x_1)\,,\quad \mbox{ for } x=(x_1,x_2)\in \R^2\,.
\end{equation}
We define the  Dirichlet and Neumann ground state energies by
\begin{align}
&e_D(b,r)=\inf\{F_{b,Q_r}(u)~:~u\in H^1_0(Q_r)\}\,,\label{eq:eD}\\
&e_N(b,r)=\inf\{F_{b,Q_r}(u)~:~u\in H^1(Q_r)\}\,.\label{eq:eN}
\end{align}
We can define $g(\cdot)$ as follows (cf. \cite{Att, FK-cpde, SS02})
\begin{equation}\label{eq:g}
\forall~b>0\,,\quad g(b)=\lim_{r\to\infty}\frac{e_D(b,r)}{|Q_r|}=\lim_{r\to\infty}\frac{e_N(b,r)}{|Q_r|}\,,
\end{equation}
where  $|Q_r| =r^2$ denotes the area of $Q_r\,$.\\
Furthermore, there exists a universal constant $C>0$ such that
\begin{equation}\label{eq:g(b)*}
\forall~b>0\,,\quad\forall~r>1\,,\quad g(b)\leq \frac{e_D(b,r)}{|Q_r|}\leq \frac{e_N(b,r)}{|Q_r|}+\frac{C}r\leq g(b)+\frac{2C}r\,.\end{equation}
One can show that  the
function $g(\cdot)$ is a  non decreasing continuous function such
that
\begin{equation} \label{propg}
g(0)=-\frac12 \,,\quad  g(b) < 0  \mbox{ when } b <1\,,\, \mbox{ and } \quad g(b)=0 \mbox{ when } b\geq 1\,.
\end{equation}

\subsection{The magnetic Laplacian}

We need two  results about the magnetic Laplacian.
 The first result concerns the Dirichlet magnetic  Laplace operator in a bounded set $\Omega$  with a strong constant magnetic field $B$, that is
$$-(\nabla-iB\Ab_0)^2\quad{\rm in~}L^2(\Omega)\,,$$
with the  Dirichlet condition
$$ u=0~{\rm on~}\partial\Omega\,.$$
Here $\Ab_0$ is the vector field introduced in \eqref{eq:A0}, with  $\curl\Ab_0=1$. It is based on  the elementary spectral inequality:
\begin{lem}\label{lem:ml*}
For all $B\in\R$ and $\phi\in H^1_0(\Omega)$, it holds
$$\int_\Omega|(\nabla-iB\Ab_0)\phi |^2\,dx\geq |B|\int_\Omega|\phi(x)|^2\,dx\,.$$
\end{lem}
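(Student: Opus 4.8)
The plan is to establish the bound via the standard commutator (ground-state) identity for the magnetic momentum operators. First I would set $D_j := -i\partial_{x_j} - B(\Ab_0)_j$ for $j=1,2$, so that the left-hand side becomes $\int_\Om(|D_1\phi|^2+|D_2\phi|^2)\,dx$. These operators are formally symmetric, and since $\phi\in H^1_0(\Om)$ integration by parts produces no boundary contributions; because $\Om$ is bounded we have $\Ab_0\in L^\infty(\Om)$, so a density argument (first taking $\phi\in C_c^\infty(\Om)$ and then passing to the limit in $H^1_0(\Om)$) legitimizes all the manipulations that follow.

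The key algebraic input is the commutation relation $[D_1,D_2] = iB\,\curl\Ab_0 = iB$, which follows by a direct computation using $(\Ab_0)_1=-x_2/2$ and $(\Ab_0)_2=x_1/2$. I would then exploit the manifest nonnegativity of $\int_\Om|(D_1\pm iD_2)\phi|^2\,dx\ge 0$. Expanding the square and using the symmetry of the $D_j$ to rewrite the cross terms as $i\int_\Om([D_1,D_2]\phi)\,\overline\phi\,dx = -B\int_\Om|\phi|^2\,dx$ (for the $+$ sign, with the opposite sign for $-$), one arrives at the two identities
\[
\int_\Om\big(|D_1\phi|^2+|D_2\phi|^2\big)\,dx = \int_\Om|(D_1\pm iD_2)\phi|^2\,dx \;\pm\; B\int_\Om|\phi|^2\,dx\,.
\]
The two signs are forced to be opposite precisely by the value $[D_1,D_2]=iB$.

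Dropping the nonnegative first term on the right, the $+$ choice yields $\int_\Om(|D_1\phi|^2+|D_2\phi|^2)\,dx \ge B\int_\Om|\phi|^2\,dx$ and the $-$ choice yields $\ge -B\int_\Om|\phi|^2\,dx$; selecting the sign according to that of $B$ produces the claimed lower bound with $|B|$ on the right-hand side. I expect the only genuinely delicate point to be the justification of the symmetry of the $D_j$ and the vanishing of the boundary terms for $H^1_0$ functions on a merely bounded (not necessarily smooth) domain, which the density argument handles cleanly; the commutator computation and the expansion of the square are routine.
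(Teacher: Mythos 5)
Your proof is correct, and it is the standard argument for this lowest-Landau-level bound: the identity $\|D_1\phi\|^2+\|D_2\phi\|^2=\|(D_1\pm iD_2)\phi\|^2\pm B\|\phi\|^2$, obtained from the commutation relation $[D_1,D_2]=iB\curl\Ab_0=iB$, immediately gives the lower bound $|B|\,\|\phi\|^2$ after discarding the nonnegative square. The paper offers no proof of this lemma at all -- it is quoted as an ``elementary spectral inequality'' -- so your write-up simply supplies the expected argument; the reduction to $\phi\in C_c^\infty(\Omega)$ by density in $H^1_0(\Omega)$ (using $\Ab_0\in L^\infty(\Omega)$ on the bounded domain) correctly takes care of the symmetry of the $D_j$ and the absence of boundary terms.
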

The second result concerns the Neumann magnetic  Laplace operator in a bounded set $\Omega$  with a strong constant magnetic field $B$, that is
$$-(\nabla-iB\Ab_0)^2\quad{\rm in~}L^2(\Omega)\,,$$
with the (magnetic) Neumann condition
$$\nu\cdot(\nabla-iB\Ab_0)u=0~{\rm on~}\partial\Omega\,.$$
Here $\nu$ is the unit inward normal vector on $\partial\Omega$. The 
asymptotic behavior of the groundstate energy as $|B|\to\infty$ is well known  (cf. \cite{HM, LuPajmp} and \cite[Prop.~8.2.2]{FH-b}):
\begin{lem}\label{lem:ml}
There exist $\hat \beta_0>0$ and $C>0$ such that, if $|B|\geq \hat \beta_0$ and $\phi\in H^1(\Omega)\,$, 
$$\int_\Omega|(\nabla-iB\Ab_0)\phi|^2\,dx\geq \left(\Theta_0|B|-C|B|^{3/4}\right)\int_\Omega|\phi|^2\,dx\,.$$
%$$ \mu_1(B)\geq \Theta_0|B|-C|B|^{3/4}\,.$$
\end{lem}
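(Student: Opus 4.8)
The plan is to reduce the global lower bound to a local model computation and then patch the local estimates together with a partition of unity (the IMS localization formula). First I would fix a length scale $\ell = \ell(B)$ to be chosen at the end, typically $\ell \sim |B|^{-1/2+\delta}$ for a small $\delta>0$, so that on each localization cell the magnetic field "looks" constant at the relevant scale and the cell carries roughly $|B|\ell^2 \to \infty$ units of flux. Cover $\overline{\Omega}$ by boxes of side $\ell$ and build a smooth quadratic partition of unity $\sum_j \chi_j^2 = 1$ with $\sum_j |\nabla \chi_j|^2 \leq C\ell^{-2}$. The IMS formula then gives
\begin{equation}\label{eq:IMS-plan}
\int_\Omega |(\nabla - iB\Ab_0)\phi|^2\,dx = \sum_j \int_\Omega |(\nabla - iB\Ab_0)(\chi_j\phi)|^2\,dx - \sum_j \int_\Omega |\nabla\chi_j|^2\,|\phi|^2\,dx\,,
\end{equation}
so that the localization error is bounded by $C\ell^{-2}\int_\Omega|\phi|^2$. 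Each term $\chi_j\phi$ is supported in a single cell, and I classify the cells into \emph{interior} cells (those whose support meets only $\Omega$) and \emph{boundary} cells (those meeting $\partial\Omega$).

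For an interior cell the support of $\chi_j\phi$ lies in the open set $\Omega$, so $\chi_j\phi$ extends by zero to a Dirichlet-type function on a model domain; here Lemma~\ref{lem:ml*} applies and yields the stronger lower bound $|B|\int|\chi_j\phi|^2$, which comfortably beats the target constant $\Theta_0 < 1$. The genuinely delicate cells are the boundary ones, where the function need not vanish on $\partial\Omega$ and one can only hope for the de~Gennes constant $\Theta_0$. The key step is therefore to establish a local lower bound of the form $\int |(\nabla - iB\Ab_0)(\chi_j\phi)|^2 \geq (\Theta_0 - C'|B|^{-1/4}\ell^{\ast})|B|\int|\chi_j\phi|^2$ on a boundary cell. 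I would obtain this by flattening the boundary in each cell via a local diffeomorphism to the half-plane (legitimate since $\ell \to 0$ and $\partial\Omega$ is smooth, so the metric distortion is $O(\ell)$), gauging the constant magnetic field to the Landau form, and comparing with the half-plane Neumann model whose ground state energy is exactly $\Theta_0|B|$ by the definition \eqref{eq:Theta0} of $\Theta_0$. The curvature of $\partial\Omega$ and the error in the boundary flattening produce correction terms that must be tracked carefully.

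The main obstacle is precisely this boundary model estimate: controlling the geometric errors from straightening $\partial\Omega$ and the gauge change so that they are absorbed into the remainder $C|B|^{3/4}\int|\phi|^2$, rather than degrading the leading constant below $\Theta_0$. This forces a careful balancing of $\ell$: taking $\ell$ too large makes the field inhomogeneity and the curvature corrections too big, while taking $\ell$ too small makes the localization error $C\ell^{-2}$ in \eqref{eq:IMS-plan} dominate. The choice $\ell \sim |B|^{-3/8}$ balances a curvature/metric error of order $|B|\cdot\ell = |B|^{5/8}$ against the localization cost $\ell^{-2}=|B|^{3/4}$, and indeed reproduces the stated $|B|^{3/4}$ remainder; one then fixes $\hat\beta_0$ large enough that the resulting effective constant stays below the prefactor $\Theta_0 - C|B|^{-1/4}$ in the claim. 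Finally, summing the interior and boundary contributions, using $\sum_j\int|\chi_j\phi|^2 = \int|\phi|^2$, and folding the $C\ell^{-2}=C|B|^{3/4}$ localization error into the remainder completes the estimate. Since this is a standard semiclassical result (and the excerpt cites \cite{HM, LuPajmp} and \cite[Prop.~8.2.2]{FH-b} for it), I expect the authors either to invoke it directly or to reproduce exactly this localization-plus-half-plane-model argument.
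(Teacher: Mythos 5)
Your sketch is the standard localization-plus-half-plane-model argument, and it is exactly what the cited references carry out; the paper itself offers no proof of this lemma but simply invokes \cite{HM, LuPajmp} and \cite[Prop.~8.2.2]{FH-b}, as you anticipated. The only imprecision is in the bookkeeping of which errors force $\ell\sim|B|^{-3/8}$: the metric distortion contributes only $O(|B|\ell)=O(|B|^{5/8})$, and the binding constraint actually comes from pairing the localization cost $\ell^{-2}$ against the Cauchy--Schwarz/gauge error $\epsilon |B|+\epsilon^{-1}|B|^2\ell^4$ arising from the $t^2$ term in the straightened potential --- but with $\ell=|B|^{-3/8}$ and $\epsilon=|B|^{-1/4}$ every error term is $O(|B|^{3/4})$, so the conclusion stands.
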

%{\clr  Bernard: Peut-\^etre il faudrait faire une pr\'esentation plus sym\'etrique entre Dirichlet et Neumann:\\For all $B\in\R$ and $\phi\in H^1(\Omega)$, it holds}

\subsection{Universal bound on the order parameter}

If $(\psi,\Ab)$ is a solution of \eqref{eq:GL}, then $\psi$ satisfies in $\Omega$ (cf. \cite[Prop.~10.3.1]{FH-b})
\begin{equation}\label{eq:psi<1}
|\psi(x) |\leq 1 \,.
\end{equation}

\subsection{The magnetic energy}

Let us introduce the space of vector fields
\begin{equation}\label{eq:H1div}
H^1_{\rm div}(\Omega)=\{\Ab\in H^1(\Omega;\R^2)~:~{\rm div}\Ab=0{\rm ~in~}\Omega\quad{\rm and}\quad\nu\cdot\Ab=0~{\rm on~}\partial\Omega\}\,.
\end{equation}
The functional in \eqref{eq-3D-GLf} is invariant under the gauge transformations $(\psi,\Ab)\mapsto (e^{i\phi}\psi,\Ab+\nabla\phi)$. Consequently, if $(\psi,\Ab)$ solves \eqref{eq:GL}, we may apply a gauge transformation such that the new configuration $(\widetilde\psi = e^{i\phi}\psi, \widetilde A=A+\nabla\phi)$ is a solution of \eqref{eq:GL} and furthermore $\widetilde\Ab\in H^1_{\rm div}(\Omega)$.
Having this in hand, we always assume that every critical/minimizing configuration $(\psi,\Ab)$ satisfies
$\Ab\in H^1_{\rm div}(\Omega)$ which simply amounts to a gauge transformation.

For given $B_0 \in L^2(\Omega)$, there exists a unique vector field satisfying
\begin{equation}\label{eq:F}
\Fb\in H^1_{\rm div}(\Omega)
\quad{\rm and}\quad\curl\Fb=B_0\,.
\end{equation}
Actually,  $\Fb=\nabla^\bot f$ where $f\in H^2(\Omega)\cap H^1_0(\Omega)$ is the unique solution of  $-\Delta f=B_0$  . 

\begin{rem}
By the elliptic Schauder H\"older estimates (see for example Appendix E.3 in \cite{FH-b}),  if in addition $B_0 \in C^{0,\alpha}(\overline{\Omega})$ for some $\alpha >0\,$, then the vector field $\Fb$ is smooth of class $C^{1,\alpha}(\overline{\Omega})\,$.\\
\end{rem}

We recall the following result from \cite{Att}:

\begin{proposition}\label{lem:Att}
Let $\gamma\in(0,1)$ and  $0<c_1<c_2$ be fixed constants. Suppose that $B_0\in L^2(\Omega)$. 
There exist $\kappa_0>0$ and $C>0$ such that, if $\kappa\geq\kappa_0$, $c_1\,\kappa\leq H \leq c_2\,\kappa$ and if $(\psi,\Ab)_{\kappa,H}\in H^1(\Omega)\times H^1_{\rm div}(\Omega)$ is a minimizer of \eqref{eq-gse}, then
$$\|\Ab-\Fb\|_{C^{0,\gamma}(\overline{\Omega})}\leq \frac{C}{\kappa}\,.$$
\end{proposition}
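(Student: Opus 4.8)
The plan is to estimate $a:=\Ab-\Fb$, which by the gauge normalisation of the excerpt belongs to $H^1_{\rm div}(\Omega)$ and satisfies $\curl a=\curl\Ab-B_0$. The structural observation that makes everything work is that $B_0$ \emph{disappears} from the equation governing $a$: the regularity of $a$ is controlled entirely by the supercurrent, and is therefore insensitive to the low regularity of $B_0$. This is exactly why the bare hypothesis $B_0\in L^2(\Omega)$ suffices for a $C^{0,\gamma}$ conclusion.

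First I would record a kinetic energy bound. Testing the minimality of $(\psi,\Ab)$ against the trial configuration $(0,\Fb)$, whose energy is $\mathcal E(0,\Fb)=(\kappa H)^2\int_\Omega|\curl\Fb-B_0|^2\,dx=0$ (by \eqref{eq:F}), yields $\mathcal E(\psi,\Ab)\le 0$. Discarding the non-negative quartic and magnetic terms in \eqref{eq-3D-GLf} and using $|\psi|\le 1$ from \eqref{eq:psi<1}, this gives $\int_\Omega|(\nabla-i\kappa H\Ab)\psi|^2\,dx\le \kappa^2|\Omega|$.

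Next I would exploit the second line of \eqref{eq:GL}. Writing $w:=\curl a=\curl\Ab-B_0$, that equation reads $-\nabla^\perp w=(\kappa H)^{-1}\,\mathrm{Im}\big(\overline\psi\,(\nabla-i\kappa H\Ab)\psi\big)$, while the fourth line of \eqref{eq:GL} gives the trace $w=0$ on $\partial\Omega$. Since $|\mathrm{Im}(\overline\psi\,(\nabla-i\kappa H\Ab)\psi)|\le|(\nabla-i\kappa H\Ab)\psi|$ (again using $|\psi|\le 1$), the kinetic bound controls the right-hand side in $L^2$ by $(\kappa H)^{-1}\kappa|\Omega|^{1/2}\le C/\kappa$, where I use $H\ge c_1\kappa$. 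As $\|\nabla w\|_{L^2}=\|\nabla^\perp w\|_{L^2}$, this already forces $w\in H^1(\Omega)$ with $\|\nabla w\|_{L^2}\le C/\kappa$; combined with $w\in H^1_0(\Omega)$ and the Poincar\'e inequality it yields $\|w\|_{H^1(\Omega)}\le C/\kappa$.

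Finally I would recover $a$ from $w$ and bootstrap. Since $\Omega$ is simply connected and $\Div a=0$, I write $a=\nabla^\perp\phi$ for a stream function $\phi$, normalised by $\phi=0$ on $\partial\Omega$ (legitimate because $\nu\cdot a=0$ makes $\phi$ constant on the connected boundary $\partial\Omega$). Then $\Delta\phi=w$ with $\phi\in H^1_0(\Omega)$, and elliptic regularity on the smooth domain $\Omega$ gives $\|\phi\|_{H^3(\Omega)}\le C\|w\|_{H^1(\Omega)}$, hence $\|a\|_{H^2(\Omega)}\le C\|\phi\|_{H^3(\Omega)}\le C/\kappa$. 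The proof then closes with the two-dimensional Sobolev embedding $H^2(\Omega)\hookrightarrow C^{0,\gamma}(\overline\Omega)$, valid for every $\gamma\in(0,1)$, which converts the $H^2$-bound into $\|\Ab-\Fb\|_{C^{0,\gamma}(\overline\Omega)}\le C/\kappa$. The delicate point, and the one I would watch most carefully, is precisely this regularity threshold: in dimension two an $H^1$-bound on $a$ alone gives no H\"older control, so one must climb to $H^2$ for $a$ (equivalently $H^1$ for the scalar $w$) from nothing more than an $L^2$ current estimate; the cancellation of $B_0$ noted above is exactly what allows the $H^1$-estimate on $w$ — and with it the borderline embedding $H^2\hookrightarrow C^{0,\gamma}$ — to be reached despite $B_0$ being only square-integrable.
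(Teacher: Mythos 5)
Your argument is correct and is essentially the proof the paper relies on (it delegates to Attar's paper, whose argument is exactly this chain: energy comparison with $(0,\Fb)$, the $L^2$ current bound from the second Ginzburg--Landau equation combined with the boundary condition $\curl\Ab=B_0$ on $\partial\Omega$, the stream-function representation with elliptic regularity, and the embedding $H^2(\Omega)\hookrightarrow C^{0,\gamma}(\overline\Omega)$ in two dimensions). Your observation that $B_0$ cancels from the equation for $a=\Ab-\Fb$ is precisely the reason the paper can weaken the smoothness hypothesis on $B_0$ to $L^2$, so nothing further is needed.
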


The proof of Proposition~\ref{lem:Att} given in \cite{Att} is made under the assumption $B_0\in C^\infty(\overline\Omega)$, but it still holds under the weaker assumption $B_0\in L^2(\Omega)$.

The next result gives the existence of  a useful gauge transformation that allows us to approximate the vector field $\Fb$ by a vector field generating a constant magnetic field. It is similar to the  result in  \cite[Lem.~A.3]{Att}, but the difference here is that we only  assume $\Fb\in C^{1,\alpha}(\overline {\Omega})$ instead of $C^2$.

\begin{lem}\label{lem:Gauge-Att}
Let $\alpha\in(0,1)$, $r_0>0$ and $B_0\in C^{0,\alpha}(\overline\Omega)$. There exists $C>0$  and for any $a\in \overline{\Omega}$ a  function  $\varphi_a \in C^{2,\alpha} ( \mathbb R^2)$ such that, if
$r\in(0,r_0]$ and $\ B(a,r)\cap\Omega\not=\emptyset\,,$ then
$$\forall~x\in\overline{B(a,r)\cap\Omega}\,,\quad 
|\Fb(x)-B_0(a)\Ab_0(x-a)-\nabla\varphi_a(x)|\leq C\, r^{1+\alpha}\,.$$
Here $\Fb$ is the vector field satisfying \eqref{eq:F}.
\end{lem}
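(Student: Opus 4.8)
<br>

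The goal is to show that $\mathbf F$, which generates the magnetic field $B_0$, can be approximated on a small disk $B(a,r)$ by the vector potential $B_0(a)\mathbf A_0(x-a)$ of the \emph{constant} field equal to the value of $B_0$ at the center $a$, up to a gauge gradient and an error of size $r^{1+\alpha}$. The plan is to build the gauge function $\varphi_a$ explicitly so that it absorbs the difference between $\mathbf F$ and the frozen-coefficient potential, and then to exploit the $C^{1,\alpha}$ regularity of $\mathbf F$ (guaranteed by the Schauder estimate recalled in the remark after \eqref{eq:F}) together with the $C^{0,\alpha}$ regularity of $B_0$ to control the remainder.

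The natural construction is as follows. Set $\mathbf G_a(x)=\mathbf F(x)-B_0(a)\mathbf A_0(x-a)$. Since $\curl\mathbf A_0=1$, we have $\curl\mathbf G_a = B_0(x)-B_0(a)$, and by the Hölder continuity of $B_0$ this curl is $O(r^\alpha)$ on $\overline{B(a,r)\cap\Omega}$. First I would subtract the constant vector $\mathbf G_a(a)$ by choosing $\varphi_a$ to contain the linear term $\mathbf G_a(a)\cdot(x-a)$; after this reduction one may assume the difference vanishes at the center. Then I would define $\varphi_a$ by a radial (Poincaré-lemma) primitive, $\varphi_a(x)=\int_0^1 (x-a)\cdot\big(\mathbf G_a(a+t(x-a))-\mathbf G_a(a)\big)\,dt$ plus the linear piece above, and estimate $\mathbf G_a-\nabla\varphi_a$ directly. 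The key point is that $\nabla\varphi_a$ captures the full gradient (curl-free) part of $\mathbf G_a$, so the residual $\mathbf G_a-\nabla\varphi_a$ is controlled purely by the curl of $\mathbf G_a$, which is $O(r^\alpha)$; integrating over a segment of length $O(r)$ then yields the claimed $O(r^{1+\alpha})$ bound. One must also check that $\varphi_a$ extends to a $C^{2,\alpha}$ function on all of $\mathbb R^2$, which follows since $\mathbf F\in C^{1,\alpha}(\overline\Omega)$ can be extended to a $C^{1,\alpha}$ field on $\mathbb R^2$ and the radial primitive gains one derivative.

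The main obstacle is the quantitative $r$-dependence under only Hölder regularity: unlike the $C^2$ setting of \cite[Lem.~A.3]{Att}, where a second-order Taylor expansion of $\mathbf F$ at $a$ immediately produces the $r^2$ error, here I cannot Taylor-expand $B_0$ to first order. The replacement is to work at the level of the curl rather than the potential: the estimate $|\curl\mathbf G_a(x)|=|B_0(x)-B_0(a)|\le [B_0]_{C^{0,\alpha}}\,r^\alpha$ is the only regularity of $B_0$ used, and the Hölder seminorm is uniform over $a\in\overline\Omega$, giving a constant $C$ independent of $a$. A secondary technical point is the presence of the domain: the disk $B(a,r)$ may be cut by $\partial\Omega$, so I would work with the extension of $\mathbf F$ to a full neighborhood in $\mathbb R^2$ and build $\varphi_a$ there, ensuring all line segments used in the radial primitive stay inside $B(a,r)$ (which is convex) where the extended field is defined, so the Hölder bound on the curl persists and the estimate holds on $\overline{B(a,r)\cap\Omega}$ as stated.
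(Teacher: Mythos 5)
Your proof is correct and is essentially the paper's argument in a different packaging: the radial (Poincar\'e) primitive you take of $\mathbf G_a=\Fb-B_0(a)\Ab_0(\cdot-a)$ leaves exactly the residual $\mathbf G_a-\nabla\varphi_a=\Big(\int_0^1 t\,\big(B_0(a+t(x-a))-B_0(a)\big)\,dt\Big)\big(-(x-a)_2,(x-a)_1\big)$, which is precisely the quantity $\mathbf{\widetilde A}-B_0(a)\Ab_0$ that the paper estimates after introducing its transverse-gauge potential $\mathbf{\widetilde A}(y)=\big(\int_0^1 s\widehat B_0(a+sy)\,ds\big)(-y_2,y_1)$, and both bounds reduce to $|B_0(a+t(x-a))-B_0(a)|\leq [B_0]_{C^{0,\alpha}}\,r^{\alpha}$ after extending $\Fb$ to a $C^{1,\alpha}$ field on $\R^2$ exactly as you propose. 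The only (harmless, and shared with the paper) imprecision is the claimed $C^{2,\alpha}$ regularity of $\varphi_a$: the primitive of a field whose curl is merely $C^{0,\alpha}$ is a priori only $C^{1,\alpha}$, which is all that is needed where the lemma is applied.
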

\begin{proof}[Proof of Lemma~\ref{lem:Gauge-Att}]
Since the boundary of $\Omega$ is smooth and $ \Fb\in C^{1,\alpha}(\overline{\Omega};\R^2)$, the  vector field $\Fb$ admits an extension $\widehat\Fb$  in $C^{1,\alpha}(\R^2;\R^2)$.  We get in this way an extension $\widehat B_0 = \curl \widehat \Fb $ of $B_0$ in $C^{0,\alpha} (\mathbb R^2)$.   We now define in $\mathbb R^2$, the two vector fields
$$\widetilde \Fb(y)=\widehat \Fb(a+y)\,,\quad \mathbf {\widetilde A}(y)=\left(\int_0^1 s\widehat B_0(a+sy)\,ds\right)(-y_2,y_1)\,.$$

Clearly,
$\curl\widetilde\Fb=\curl \mathbf{\widetilde  A}=\widehat B_0(a+y)$.
Consequently, by integrating the closed $1$-form associated with $\widetilde F -\mathbf {\widetilde A}$,  there exists a function $ \widetilde\varphi\in C^{2,\alpha}(\mathbb R^2)$   such that $$\widetilde\Fb-\nabla\widetilde\varphi=\mathbf {\widetilde A}\,,~\, \widetilde \phi (0)=0\,.$$
Since $\widehat B_0\in C^{0,\alpha}(\mathbb R^2)$, %$|B_0(a+sy)-B_0(a)|=\mathcal O(\ell^\alpha)$ in $
$\mathbf {\widetilde A} (y)=B_0(a)(-y_2,y_1)+\mathcal O(r^{1+\alpha})$ in  $\overline{B(0,r)}$. 
We then define the function $\varphi_a$ by  $\varphi_a(x)=\widetilde\varphi(x-a)+B_0(a)\Big(a_2x_1-a_1x_2\Big)$. This implies
$$\forall~x\in\overline{B(a,r)}\,,\quad 
|{\bf \widehat F} (x)-B_0(a)\Ab_0(x-a)-\nabla\varphi_a(x)|\leq C\, r^{1+\alpha}\,,$$
and the lemma by restriction to $\overline{\Omega}$.
\end{proof}

\subsection{Lower bound of the kinetic energy term}
The main result in this subsection is:
\begin{prop}\label{lem:qf}  
Let $0<c_1<c_2$ %and $\epsilon\in(0,\beta_0)$
 be fixed constants. Suppose that $\alpha\in(0,1]$ and $B_0\in C^{0,\alpha}(\overline\Omega)$. 
 %Under Assumption~\ref{ass:mf},  with $\alpha \in(0,1)$, 
 There exist  $\kappa_0>0$ and $C>0$ such that the following is true, with
\begin{equation}
\sigma(\alpha) = \frac{2\alpha}{3+\alpha}\,.
%\min \left( \frac{4 \alpha}{6-\alpha}, \frac 12\right)\,.
\end{equation}
\begin{enumerate}
\item For 
\begin{itemize}
\item $\kappa\geq\kappa_0$, $c_1\, \kappa\leq H\leq c_2\, \kappa$\,;
\item $(\psi,\Ab)_{\kappa,H}$  a solution of \eqref{eq:GL}\,;
\item $\phi\in H^1(\Omega)$ satisfies ${\rm supp\,}\phi\subset\{x\in\overline\Omega,~|B_0(x)|>0\}$\,,
\end{itemize}
we have
$$\int_\Omega|(\nabla -i\kappa H\Ab)\phi \, (x)|^2\,dx\geq
\Theta_0\kappa H\int_\Omega
\big(|B_0(x)|-C \kappa^{-\sigma(\alpha)}\big) |\phi(x)|^2\,dx\,.$$
\item If in addition $\phi=0$ on $\partial\Omega$, then
%{\clr Bernard: Pourquoi ne pas mettre simplement $\phi=0$ on $\partial \Omega$ ? Ou alors, il faut \'enoncer des r\'esultats avec $\Omega^\pm$}  then
$$\int_\Omega|(\nabla -i\kappa H\Ab)\phi\, (x)|^2\,dx\geq
\kappa H\int_\Omega \big(|B_0(x)|-C\kappa^{-\sigma (\alpha)}\big)|\phi(x)|^2\,dx\,.$$
\end{enumerate}
\end{prop}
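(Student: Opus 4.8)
The statement is a lower bound on the magnetic kinetic energy localized on the (open) support of $B_0$, with the constant $\Theta_0$ in the Neumann case (part 1) and constant $1$ in the Dirichlet case (part 2). The strategy is a partition-of-unity (IMS-type) localization onto small balls, on each of which $B_0$ is nearly constant and the vector field $\kappa H\Ab$ is gauge-equivalent to a constant-field potential; on each ball we then invoke Lemma~\ref{lem:ml} (Neumann) or Lemma~\ref{lem:ml*} (Dirichlet) to extract $\Theta_0$ or $1$ times the local field strength, and finally we reassemble and control the errors, optimizing the ball size to produce the exponent $\sigma(\alpha)=\frac{2\alpha}{3+\alpha}$.

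I would first fix a length scale $\ell=\kappa^{-\rho}$ (with $\rho\in(0,1)$ to be chosen) and cover $\overline\Omega$ by balls $B(a_j,\ell)$ with bounded overlap, together with a subordinate partition of unity $\sum_j\chi_j^2=1$ with $\sum_j|\nabla\chi_j|^2\le C\ell^{-2}$. The IMS formula gives
\begin{equation}
\int_\Omega|(\nabla-i\kappa H\Ab)\phi|^2\,dx=\sum_j\int_\Omega|(\nabla-i\kappa H\Ab)(\chi_j\phi)|^2\,dx-\sum_j\int_\Omega|\nabla\chi_j|^2\,|\phi|^2\,dx\,,
\end{equation}
so the localization error is $O(\ell^{-2})\|\phi\|_2^2=O(\kappa^{2\rho})\|\phi\|_2^2$. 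On each ball I replace $\Ab$ by $\Fb$ (Proposition~\ref{lem:Att}, cost $O(\kappa^{-1})$ in $C^{0,\gamma}$) and then use Lemma~\ref{lem:Gauge-Att} to gauge $\kappa H\Fb$ into $\kappa H\,B_0(a_j)\Ab_0(\cdot-a_j)$ up to a gradient, at the cost of a field error $O(\kappa H\,\ell^{1+\alpha})=O(\kappa^{2-\rho(1+\alpha)})$ measured against $\|\phi\|_2$. Gauge invariance of the local quadratic form lets me drop the $\nabla\varphi_{a_j}$ term. For the balls meeting $\partial\Omega$ I use the Neumann Lemma~\ref{lem:ml} on $B(a_j,\ell)\cap\Omega$, which yields the factor $\Theta_0|B_0(a_j)|\kappa H$ minus a lower-order $O((\kappa H)^{3/4})$ correction; for interior balls the same applies, while in part~2 the Dirichlet hypothesis $\phi=0$ on $\partial\Omega$ lets me use Lemma~\ref{lem:ml*} everywhere to obtain the full factor $1$ in place of $\Theta_0$.

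Collecting terms, the main contribution is $\Theta_0\kappa H\sum_j|B_0(a_j)|\int|\chi_j\phi|^2$, and replacing $|B_0(a_j)|$ by $|B_0(x)|$ on $B(a_j,\ell)$ costs $O(\kappa H\,\ell^{\alpha})\|\phi\|_2^2$ by H\"older continuity; resumming via $\sum_j\chi_j^2=1$ recovers $\Theta_0\kappa H\int_\Omega|B_0(x)||\phi|^2\,dx$. The total error relative to $\kappa H\|\phi\|_2^2$ is thus of order $\kappa^{2\rho-2}+\kappa^{-\rho(1+\alpha)}\cdot\kappa^{?}+\ell^{\alpha}=O(\kappa^{2\rho-2})+O(\kappa^{-\rho(1+\alpha)+\text{(field-cross-term)}})+O(\kappa^{-\rho\alpha})$, and I would balance the two genuinely competing scales (the localization term $\ell^{-2}$ against the gauge/H\"older field error $\ell^\alpha$ or $\ell^{1+\alpha}$) by choosing $\rho$ so that the resulting relative error is $\kappa^{-\sigma(\alpha)}$ with $\sigma(\alpha)=\frac{2\alpha}{3+\alpha}$; concretely $\rho$ is forced by equating the dominant localization error with the dominant field error.

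\textbf{Main obstacle.} The delicate point is the bookkeeping of the gauge error. The Cauchy--Schwarz expansion of $|(\nabla-i\kappa H B_0(a_j)\Ab_0)(\chi_j\phi)+i\kappa H(\text{field error})(\chi_j\phi)|^2$ produces a cross term that mixes the full kinetic energy with the $O(\ell^{1+\alpha})$ field error, and one must absorb it using a small parameter (Young's inequality) without destroying the leading $\Theta_0\kappa H$ coefficient. Getting the cross term's contribution to come out at the right power of $\kappa$ — so that, after optimization over $\ell$, the exponent is exactly $\frac{2\alpha}{3+\alpha}$ rather than something weaker — is where the precise choice $\rho=\frac{3}{3+\alpha}$ (giving $\ell^{-2}\sim\kappa^{6/(3+\alpha)}$ balanced against the field terms) must be pinned down, and is the step I expect to require the most care.
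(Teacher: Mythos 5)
Your proposal follows essentially the same route as the paper's proof: an IMS partition of unity at scale $\ell=\kappa^{-\rho}$, replacement of $\Ab$ by $\Fb$ via Proposition~\ref{lem:Att}, the local gauge transformation of Lemma~\ref{lem:Gauge-Att} to reduce to a constant field, the spectral inequalities of Lemmas~\ref{lem:ml} and \ref{lem:ml*}, a H\"older replacement of $B_0(a_j)$ by $B_0(x)$, and an optimization of the Cauchy--Schwarz splitting parameters that indeed lands on $\rho=\frac{3}{3+\alpha}$ and $\sigma(\alpha)=\frac{2\alpha}{3+\alpha}$. The only cosmetic difference is that in the final balance the dominant competing terms are the localization error $\kappa^{-2}\ell^{-2}$, the Young-inequality weights and the gauge error $\kappa^2\ell^{2+2\alpha-\delta}$ (the H\"older term $\ell^\alpha$ is subdominant at the optimal $\rho$), but this does not affect the argument.
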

The estimates in Items~(1) and (2)   in this proposition are known   when the vector field $\Ab$ is $C^2$, independent of $(\kappa,H)$,    $\curl\Ab\not=0$ and $B_0$ is replaced by $\curl \Ab$ (cf. Lemma~\ref{lem:ml} and \cite{HeMo-JFA}).

For $\alpha=1$ (i.e. $B_0$ is Lipschitz) the errors in Proposition~\ref{lem:qf} and Lemma~\ref{lem:ml} are of the same order.

~

\begin{proof}[Proof of Proposition~\ref{lem:qf}]~
Let us choose an arbitrary $\phi\in H^1(\Omega)$. All constants below are independent of $\phi$. For the sake of simplicity, we will work under the additional assumption that ${\rm supp\,}\,\phi\subset\{B_0>0\}$.\\

{\bf Step~1.} {\it Decomposition of the energy via a partition of unity.}~

For $\ell >0$
 we 
consider the partition of unity in $\R^2$
$$\sum_{j}\chi_j^2=1\,,\quad
\sum_{j}|\nabla\chi_j|^2\leq C\, \ell^{-2}\quad{\rm in~}\R^2\,,\quad{\rm and}\quad {\rm supp\,}\chi_j\subset B(a_j^\ell,\ell)\,.$$
Here the construction is first done for $\ell =1$ and then for general $\ell >0$ by dilation. Hence the constant $C$ is independent of $\ell$. Although  the points $(a_j^\ell)$ depend on $\ell$, we omit below the reference to 
$\ell$ and write $a_j$ for $a_j^\ell$.

In what follows, we will use this partition of unity with 
$$\ell=\kappa^{-\rho}\,,\quad \quad 0<\rho<1\quad {\rm and}\quad\kappa{\rm~ large~ enough}.$$

Using this partition of unity, we may estimate from below  the kinetic energy term as follows
\begin{equation}\label{eq:ke-0}
\int_\Omega|(\nabla-i\kappa H\Ab)\phi|^2\,dx\geq
\sum_{j} \left( \int_\Omega|(\nabla-i\kappa H\Ab)(\chi_j\phi)|^2\,dx-C\ell^{-2}\int_\Omega|\chi_j\phi|^2\,dx\right) \,.
\end{equation}
Let $\alpha_j(x)=(x-a_j)\cdot\big(\Ab(a_j)-\Fb(a_j)\big)$, where $\Fb$ is the vector field in \eqref{eq:F}. Note the useful decomposition $$\Ab(x)-\nabla\alpha_j=\Fb(x)+\big(\Ab(x)-\Fb(x)\big)
-\big(\Ab(a_j)-\Fb(a_j)\big)\,.
$$
 By Proposition~\ref{lem:Att}, we have in  $B(a_j,\ell)\cap\Omega$,
\begin{equation}\label{eq:ke-1}
\begin{aligned}
|(\nabla -i\kappa H\Ab)(\chi_j\phi) |^2&=|(\nabla -i\kappa H(\Ab-\nabla\alpha_j))(e^{-i\kappa H\alpha_j}\chi_j\phi)|^2\\
&\geq (1-\ell^{\delta})|(\nabla-i\kappa H\Fb)e^{-i\kappa H\alpha_j}\chi_j\phi|^2
-\ell^{-\delta}\kappa^2H^2 \ell^{2\gamma}\,\|\Ab-\Fb\|_{C^{0,\gamma}(\overline{\Omega})}^2|\chi_j\phi|^2\\
&\geq (1-\ell^{\delta})|(\nabla-i\kappa H\Fb)(e^{-i\kappa H\alpha_j}\chi_j\phi)|^2
-CH^2\ell^{(2\gamma -\delta)} |\chi_j\phi|^2\,.
\end{aligned}
\end{equation}
Here $\delta>0$ and $\gamma\in(0,1)$ are two  parameters to be chosen  later.

By Lemma~\ref{lem:Gauge-Att}, we may define a smooth function $\varphi_j$  in $B(a_j,\ell)\cap\Omega$ such that,
$$
| \Fb(x)-\nabla\varphi_j(x)-|B_0(a_j)|\Ab_0(x-a_j)|\leq C \, \ell^{1+\alpha}\,,$$
 where $C>0$ is independent of $j$.
 
Consequently, there exists $C>0$ such that, for all $j$,
\begin{multline}\label{eq:ke-2}
|(\nabla-i\kappa H\Fb)(e^{-i\kappa H\alpha_j}\chi_j\phi) |^2\geq (1-\ell^{\delta})
|(\nabla-i\kappa H|B_0(a_j)|\Ab_0(x-a_j))e^{-i\kappa H\varphi_j}e^{-i\kappa H\alpha_j}\chi_j\phi|^2\\-C\kappa^2H^2\ell^{2+2\alpha-\delta}|\chi_j\phi|^2\,.
\end{multline}

{\bf Step~2.} {\it The case ${\rm supp\,}\phi \,\subset\{x\in\overline\Omega,~B_0(x)>0\}$ and $\phi=0$ on $\partial\Omega\,$.}\\

The assumption on the support of $\phi$ yields that $\chi_j\phi\in H^1_0(\Omega)$. Collecting \eqref{eq:ke-1}, \eqref{eq:ke-2} and the spectral inequality in Lemma~\ref{lem:ml*}, we get the existence of $C>0$ such that for all $j$ 
\begin{multline*}
\int_\Omega|(\nabla-i\kappa H\Ab)(\chi_j\phi)|^2\,dx
\geq (1-2\ell^{\delta})\kappa H\int_\Omega|B_0(a_j)|\,|\chi_j\phi|^2\,dx\\-
CH^2(\ell^{2\gamma-\delta} +\kappa^2\ell^{2+2\alpha-\delta})\int_\Omega|\chi_j\phi|^2\,dx\,.
\end{multline*}
Since $B_0$ is in $C^{0,\alpha}(\overline\Omega)$, we have $B_0(x)=B_0(a_j)+\mathcal O(\ell^\alpha)$ in $B(a_j,\ell)\,$. Thus
\begin{multline*}
\int_\Omega|(\nabla-i\kappa H\Ab)(\chi_j\phi)|^2\,dx
\geq \kappa H\int_\Omega|B_0(x)|\,|\chi_j\phi(x)|^2\,dx\\-
CH^2  (\ell^\alpha+ \ell^\delta + \ell^{2\gamma-\delta} +\kappa^2\ell^{2+2\alpha-\delta})\int_\Omega|\chi_j\phi (x)|^2\,dx\,.
\end{multline*}
After summation and using that $\sum_j\chi_j^2=1\,$, we get
\begin{multline*}
\int_\Omega|(\nabla-i\kappa H\Ab)\phi|^2\,dx\\
\geq \kappa H  \left( \int_\Omega |B_0(x)|\,|\phi(x)|^2\,dx  -C(\ell^\alpha + \ell^\delta + \ell^{2\gamma-\delta} +\kappa^2\ell^{2+2\alpha-\delta} + \kappa^{-2} \ell^{-2}) \int_\Omega  |\phi|^2\,dx\right).
\end{multline*}
Hence the goal is to choose, when $\kappa \rightarrow +\infty\,$ and with $\ell=\kappa^{-\rho}$\,, the parameters  $\rho\,$, $\delta\,$, $\gamma$ and $\alpha$ in
 order to minimize  the sum 
\begin{equation}\label{sum}
\Sigma_0 (\kappa,\ell)
:=\ell^\alpha + \ell^\delta + \ell^{2\gamma-\delta} +\kappa^2\ell^{2+2\alpha-\delta} + \kappa^{-2} \ell^{-2}.
\end{equation}
If we take $\delta =\gamma $, which corresponds to give the same {\rm order} for the second and  the third terms in $\Sigma_0$\,,
we obtain with $\ell=\kappa^{-\rho}$
$$\int_\Omega|(\nabla-i\kappa H\Ab)\phi|^2\,dx
\geq \kappa H \int_\Omega\Big(|B_0(x)|-C(\kappa^{-\rho\alpha}+\kappa^{-\rho\gamma}+\kappa^{2-(2+2\alpha-\gamma)\rho}+\kappa^{2\rho-2}\Big)|\phi(x)|^2\,dx.$$
In the remainder, to minimize the error for the two last terms, we select $\rho$ such that $$2-(2+2\alpha-\gamma)\rho =2\rho-2\,,$$
 i.e. $$\rho=4/ (4+2\alpha -\gamma)\,.
 $$
 Getting the condition $0<\rho<1$ satisfied leads to  the condition $\alpha>\gamma/2\,$. We select $\gamma=\frac23\alpha\,$.
 This  choice is optimal since
\begin{equation*}
\sigma(\alpha):=\max_{0<\gamma<2\alpha}\sigma_0(\alpha,\gamma)=\sigma_0\left(\alpha,\frac{2\alpha}3\right)=\frac{2\alpha}{3+\alpha}\,,
\end{equation*}
 where
 $$
\sigma_0(\alpha,\gamma)=\min\left(\frac{4\alpha}{4+2\alpha-\gamma},\frac{4\gamma}{4+2\alpha-\gamma},\frac{2(2\alpha-\gamma)}{4+2\alpha-\gamma}\right)\,.
$$

This finishes the proof of Item (2) in Proposition~\ref{lem:qf}\,.\\

{\bf Step~3.} {\it The case ${\rm supp\,}\,\phi\subset\{x\in\overline\Omega\,,~B_0(x)>0\}$.}\\
We continue with the choice $\delta=\gamma=\frac23\alpha$ and $\rho=4/(4+2\alpha-\gamma)$.
We collect the inequalities in \eqref{eq:ke-1}, \eqref{eq:ke-2} and  Lemma~\ref{lem:ml} and  write
\begin{multline*}
\int_\Omega|(\nabla-i\kappa H\Ab)(\chi_j\phi) |^2\,dx
\geq (1-2\ell^{2\alpha/3})\, \kappa H \int_\Omega\Big(\Theta_0|B_0(a_j)|-C(\kappa H)^{-1/4}\Big)\,|\chi_j\phi|^2\,dx\\-
CH^2 \kappa^{-\sigma(\alpha)} \int_\Omega|\chi_j\phi|^2\,dx\,.
\end{multline*}
Since $B_0\in C^{0,\alpha}(\overline\Omega)$, we can replace $B_0(a_j)$ by $B_0(x)$ on the support of $\chi_j$ modulo an error $\mathcal O(\ell^\alpha)$. We insert the resulting estimate into \eqref{eq:ke-0}
and use that $\sum_j\chi_j^2=1$ to get,
$$\int_\Omega|(\nabla-i\kappa H\Ab)\phi|^2\,dx
\geq \kappa H \int_\Omega\Big(\Theta_0|B_0(x)|-C(\kappa^{-\sigma(\alpha)}+\kappa^{-1/2})\Big)|\phi(x)|^2\,dx\,. $$
Observing that $\sigma(\alpha) \leq \frac 12\,$, we have achieved  the proof of Item (1) in Proposition~\ref{lem:qf}\,.
\end{proof}

\section{Exponential decay}\label{sec:exp-dec}
\subsection{Main statements}
We recall the definition of the de\,Gennes constant $\Theta_0$ in \eqref{eq:Theta0}, and the two constants $\beta_0,\beta_1$ in \eqref{eq:ep0}. For all $\lambda\in(0,\beta_0)$, we introduce the two functions  on $\omega(\lambda)$:
\begin{equation}\label{eq:dist-om}
t_\lambda(x)={\rm dist}\big(x,\partial{\omega}(\lambda)\big)\quad{\rm and}\quad
\zeta_\lambda(x)={\rm dist}\big(x,\Omega\cap\partial\omega(\lambda)\big)\,,
\end{equation}
where $\omega(\cdot)$ is the domain introduced in \eqref{eq:om(l)}.

\begin{thm}\label{thm:exp-dec}{\bf[Exponential decay outside the  superconductivity region]}\\
Let $c_1$ and $c_2$ be two constants such that   $\beta_0^{-1}<c_1<c_2\,$. Suppose that 
Assumption~\ref{ass:mf*} holds for some $\alpha\in(0,1)$. There exists $\mu_0>0$ and for all $\mu\in(0,\mu_0)$,  there exist $\kappa_0>0\,$, $C>0$ and $\hat \alpha>0$ such that, if
$$\kappa\geq\kappa_0,\quad c_1\kappa\leq H\leq 
c_2\kappa \,,$$ and $(\psi,\Ab)_{\kappa,H}$ is a solution of \eqref{eq:GL}\,,
then the following  inequalities hold:
\begin{enumerate}
\item {\bf Decay in the interior:}
$$
\int_{\omega(\lambda)\cap\{t_\lambda(x)\geq\frac1{\sqrt{\kappa H}}\}}
\Big(|\psi (x)|^2+{\frac{1}{\kp H}}|(\nabla-i\kappa H\Ab)\psi\,(x)|^2\Big)\,\exp\Big(2\hat \alpha\sqrt{\kappa H}\,t_\lambda(x)\Big)dx
\leq
\frac{C}{\kappa}\,,
$$
where $\lambda=\displaystyle\frac\kappa H+\mu$\, ;
\item {\bf Decay up to the boundary:}
$$
\int_{\omega(\beta)\cap\{\zeta_{\beta}(x)\geq \frac{1}{\sqrt{\kappa H}}\}}
\Big(|\psi(x)|^2+{\frac{1}{\kp H}}|(\nabla-i\kappa H\Ab)\psi\, (x) |^2\Big)\,\exp\Big(2\hat \alpha\sqrt{\kappa H}\,\zeta_{\beta}(x)\Big)dx
\leq
\frac{C}\kappa\,, 
$$
where  $\beta=\Theta_0^{-1}\left(\displaystyle\frac\kappa H+\mu\right)\,$.
\end{enumerate}
\end{thm}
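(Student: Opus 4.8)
The plan is to run an Agmon-type weighted estimate, handling the two items by a single scheme distinguished only by the choice of weight and of the lower bound taken from Proposition~\ref{lem:qf}. Fix a small $\hat\alpha>0$ and set the Lipschitz weight $\Phi=\hat\alpha\sqrt{\kappa H}\,d$, with $d=t_\lambda$, $\lambda=\tfrac\kappa H+\mu$, for Item~(1), and $d=\zeta_\beta$, $\beta=\Theta_0^{-1}(\tfrac\kappa H+\mu)$, for Item~(2); since $d$ is $1$-Lipschitz, $|\nabla\Phi|\le\hat\alpha\sqrt{\kappa H}$ a.e. To make all weighted quantities lie in $H^1$, I first truncate $\Phi$ at an arbitrary height, run the estimate uniformly, and then remove the truncation by monotone convergence. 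I also introduce a cut-off $\chi$ equal to $1$ on $\{d\ge\tfrac1{\sqrt{\kappa H}}\}$ and supported in $\{d\ge\tfrac1{2\sqrt{\kappa H}}\}$, so $|\nabla\chi|\le C\sqrt{\kappa H}$ in a thin collar; in Item~(1) this automatically makes $\chi$ vanish near $\partial\Omega$ (because $t_\lambda$ is the distance to the full boundary $\partial\omega(\lambda)$), while in Item~(2) $\chi$ stays equal to $1$ up to $\partial\Omega$.

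The heart is a localized Agmon identity. Testing the first equation of \eqref{eq:GL} against $\overline{\chi^2e^{2\Phi}\psi}$ and integrating by parts, the magnetic Neumann condition $\nu\cdot(\nabla-i\kappa H\Ab)\psi=0$ removes the boundary term; writing $D=\nabla-i\kappa H\Ab$ and $w=\chi e^{\Phi}\psi$, expansion of the cross terms gives
$$\int_\Omega|Dw|^2\,dx=\kappa^2\int_\Omega(1-|\psi|^2)|w|^2\,dx+\int_\Omega e^{2\Phi}|\nabla\chi+\chi\nabla\Phi|^2|\psi|^2\,dx\,.$$
By construction $\supp w$ lies in $\{|B_0|>\lambda\}$ (resp. $\{|B_0|>\beta\}$), hence in $\{|B_0|>0\}$, and in Item~(1) also $w\in H^1_0(\Omega)$; thus Proposition~\ref{lem:qf} applies to $w$ and bounds $\int_\Omega|Dw|^2$ from below by $c_0\kappa H\int_\Omega(|B_0|-C\kappa^{-\sigma(\alpha)})|w|^2$, with $c_0=1$ in Item~(1) and $c_0=\Theta_0$ in Item~(2). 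Using $1-|\psi|^2\le1$ and $|\nabla\chi+\chi\nabla\Phi|^2\le 2|\nabla\chi|^2+2\hat\alpha^2\kappa H\chi^2$, I obtain
$$\int_\Omega\Big(c_0\,\kappa H|B_0|-\kappa^2-2\hat\alpha^2\kappa H-c_0\,C\kappa H\kappa^{-\sigma(\alpha)}\Big)|w|^2\,dx\le 2\int_\Omega e^{2\Phi}|\nabla\chi|^2|\psi|^2\,dx\,.$$

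The choice of $\lambda$ (resp. $\beta$) is exactly what renders the bracket positive on $\supp w$: there $c_0|B_0|>\tfrac\kappa H+\mu$, so $c_0\kappa H|B_0|-\kappa^2>\kappa H\mu$, and after fixing $\hat\alpha^2<\mu/8$ and $\kappa_0$ large enough that $c_0C\kappa^{-\sigma(\alpha)}<\mu/4$ for $\kappa\ge\kappa_0$, the bracket is $\ge\tfrac12\kappa H\mu$. On the right, $\nabla\chi$ is supported in the collar $\{\tfrac1{2\sqrt{\kappa H}}\le d\le\tfrac1{\sqrt{\kappa H}}\}$, where $\Phi=O(1)$ so $e^{2\Phi}\le C$, while $|\nabla\chi|^2\le C\kappa H$ and $|\psi|\le1$. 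Provided this collar has area $O(1/\sqrt{\kappa H})$, the right-hand side is $O(\sqrt{\kappa H})$, whence $\int_\Omega|w|^2\le C/\sqrt{\kappa H}\le C/\kappa$ (recall $H=b\kappa$). Since $\chi=1$ and $w=e^{\Phi}\psi$ on $\{d\ge\tfrac1{\sqrt{\kappa H}}\}$, this is the $|\psi|^2$-part of the conclusion; the kinetic part follows by feeding this bound back into the identity, which controls $\int_\Omega|Dw|^2$ by $O(\kappa)$, and by splitting $D(e^\Phi\psi)=e^\Phi(\nabla\Phi\,\psi+D\psi)$ to pass from $Dw$ to $e^{\Phi}D\psi$ on $\{\chi=1\}$.

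Two points carry the real content. Conceptually, the dichotomy between the two items is forced by the geometry: Item~(2) demands decay persisting up to $\partial\Omega$, so $\chi$ cannot be cut off there, $w\notin H^1_0$, and one must use the weaker Neumann bound with constant $\Theta_0$, which is positive only above the larger threshold $\beta=\Theta_0^{-1}(\tfrac\kappa H+\mu)$; Item~(1) measures distance to all of $\partial\omega(\lambda)$, so the weight already kills $\chi$ near $\partial\Omega$, $w\in H^1_0$, and the sharp Dirichlet bound with constant $1$ becomes available at the smaller threshold $\lambda=\tfrac\kappa H+\mu$. Technically, the main obstacle is the collar estimate needed to reach the sharp rate $C/\kappa$: one must show that the tubular neighborhood of width $1/\sqrt{\kappa H}$ around the inner boundary $\Omega\cap\partial\omega(\cdot)$ has area $O(1/\sqrt{\kappa H})$, i.e. control the set where the effective potential fails to be positive. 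This is transparent when the level sets of $B_0$ are smooth curves, but under the sole hypothesis $B_0\in C^{0,\alpha}$ it is delicate and must be handled with care—for instance by averaging the cut-off radius over the collar width and invoking the universal bound $|\psi|\le1$. A secondary but essential point is that the Hölder regularity enters only through the error $C\kappa^{-\sigma(\alpha)}$ of Proposition~\ref{lem:qf}, which is precisely why a fixed positive margin $\mu$ is built into $\lambda$ and $\beta$, so that it dominates this error once $\kappa$ is large.
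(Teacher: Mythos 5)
Your proposal is correct and follows essentially the same route as the paper: the same Agmon weight $e^{\hat\alpha\sqrt{\kappa H}\,d}$ with a cut-off at scale $(\kappa H)^{-1/2}$, the same IMS-type identity obtained by testing the first equation of \eqref{eq:GL} against $h^2\overline\psi$, the same appeal to Proposition~\ref{lem:qf} (Dirichlet constant $1$ for the interior weight $t_\lambda$, Neumann constant $\Theta_0$ for $\zeta_\beta$), the same choice $\hat\alpha^2\lesssim\mu$ to make the bracket positive, and the same use of $|\psi|\le1$ on the transition collar. The only differences are cosmetic and in your favour: you treat Item~(2) in one stroke where the paper additionally splits $\omega(\beta)$ into an interior and a boundary zone and re-invokes Item~(1), and you explicitly flag the collar-area bound $|\{d\le(\kappa H)^{-1/2}\}|=O((\kappa H)^{-1/2})$ that the paper uses silently when it writes ``$\le C/\sqrt{\kappa H}$ by \eqref{eq:psi<1}''.
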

\begin{rem}\label{rem:exp-dec}
Theorem~\ref{thm:exp-dec} says that, for  $\mu>0$ sufficiently small,
 bulk superconductivity breaks down in the region  $\{x\in\Omega,~|B_0(x)|\geq \frac\kappa H+\mu\}$
 and  that surface superconductivity breaks down in the region $\{x\in\partial\Omega,~\Theta_0|B_0(x)|\geq \frac\kappa H+\mu\}$\,. This is illustrated in Figures~1~and~2\,.
\end{rem}

\begin{rem}\label{rem:PanCMP}
 In the constant magnetic field case, $B_0=1\,$, Theorem~\ref{thm:exp-dec} is proved by Pan~\cite{Pa02}, in response to  a conjecture by Rubinstein \cite[p.~182]{R}. Our proof of Theorem~\ref{thm:exp-dec} is simpler than the one in \cite{Pa02} since we  do not use the {\it a priori} elliptic $L^\infty$-estimates, whose derivation is not easy (cf. \cite[Ch.~11]{FH-b}).
\end{rem}

\begin{rem}\label{rem:BonFo}
On a technical level, one can still avoid to use  the $L^\infty$-elliptic estimates in the proof of Theorem~\ref{thm:exp-dec} when the magnetic field is {\bf constant}, by establishing {\it a weak} decay estimate on the order parameter (namely $\|\psi\|_2=\mathcal O(\kappa^{-1/4})$). This has been done by Bonnaillie-No\"el and Fournais in \cite{BonF} and then generalized by Fournais-Helffer to non-vanishing continuous magnetic fields in \cite[Cor.~12.3.2]{FH-b}. However, in the sign-changing field case and the regime considered  in Theorem~\ref{thm:exp-dec}, the weak decay estimate as in \cite{BonF} does not hold. 

The substitute of the weak decay estimate in our proof  is the use of a (local) gauge transformation. This has been used earlier to estimate the Ginzburg-Landau energy  (cf. \cite{K-JFA, Att3}),  and the exponential decay of the order parameter for non-smooth magnetic fields (cf. \cite{AK}).   We will extend this method for obtaining  local estimates in Theorems~\ref{thm:ub-l4} and \ref{thm:lb-psi}.
\end{rem}

\begin{rem}
The conclusion in Theorem~\ref{thm:exp-dec*} is a simple consequence of Theorem~\ref{thm:exp-dec} and the estimate in Proposition~\ref{lem:Att}.
Actually, if $ O$ is an open set independent of $\kappa$ such that $  \overline{O}\subset  \omega(\kappa/H)$, then 
$$  O\subset { \omega} \left(\frac{\kappa}H+\mu\right) $$
for $\mu$ sufficiently small, and
$${\rm dist}\Big(x,\partial\omega\left(\frac{\kappa}H+\mu\right)\Big)\geq c_\mu \quad{\rm in}~O\,,$$
for a constant $c_\mu>0\,$.

Similarly, when $O$ is an open set independent of $\kappa$ and 
$$ \overline{O} \subset { \omega}(\kappa/H)\cup\{x\in\partial\Omega,~\Theta_0|B_0(x)|<\kappa/H\}\,,$$ then 
$$ O\subset   \omega\left(\Theta_0^{-1}\Big(\frac{\kappa}H+\mu\Big)\right) $$
for $\mu$ sufficiently small, and
$$ {\rm dist}\bigg(x,\partial\omega\left(\Theta_0^{-1}\Big(\frac{\kappa}H+\mu\Big)\right)\bigg)\geq \hat c_\mu \quad{\rm in}~O\,,$$
for a constant $\hat c_\mu>0\,$.
\end{rem}

The rest of this section is devoted to the proof of Theorem~\ref{thm:exp-dec}, which follows the scheme of the  proof  of the semi-classical Agmon estimates  (cf. \cite[Ch.~12]{FH-b} and references therein).

Suppose  that the parameters $\kappa$ and $H$ have the same order, i.e. 
$$\kappa\geq\kappa_0\quad{\rm and}\quad 
c_1\kappa\leq H\leq c_2\kappa\,,$$
where $\kappa_0\geq 1$ is supposed sufficiently large (this condition will appear in the proof below). 
Suppose also  that $$c_2 > c_1> \beta_0^{-1}\,,$$ where $c_1, c_2$ are fixed constants and $\beta_0$ was introduced in \eqref{eq:ep0}.

\subsection{Useful inequalities}

For all $\gamma>0\,$, we extend  to $\overline\Omega$  the definitions of $t_\gamma$ and $\zeta_\gamma$ given in \eqref{eq:dist-om} as follows
\begin{equation}\label{eq:t(x)}
t_\gamma(x)=
\left\{
\begin{array}{ll}
{\rm dist}\big(x,\partial\omega(\gamma)\big)&{\rm if~}x\in\omega(\gamma)\\
0&{\rm if~}x\in\overline\Omega\setminus \omega(\gamma))
\end{array}\right.
\end{equation}
 and
\begin{equation}\label{eq:zeta(x)} 
\zeta_\gamma(x)=
\left\{
\begin{array}{ll}
{\rm dist}\big(x, \Omega\cap\partial \omega(\gamma)\big)&{\rm if~}x\in\omega(\gamma)\\
0&{\rm if~}x\in\overline\Omega\setminus\omega(\gamma)
\end{array}\right.\,.\end{equation}
 In the sequel, we will add conditions on $\gamma$ to ensure that $\omega(\gamma)\not=\emptyset\,$.
%{\clr  Bernard: Peut \^etre prendre une distance avec signe ou supposer que c'est zero dans le compl\'ementaire de $\omega (\gamma)$.\\}

Let $\tilde{\chi} \in C^\infty(\R)$ be a non negative function satisfying
$$
\tilde{\chi}=0\ {\rm on}\ (-\infty,\frac12]\,,\quad \tilde{\chi}=1\ {\rm on}\ [1,\infty)\,.
$$
Define the   functions $\chi_\gamma $, $\eta_\gamma $, $f_\gamma $ and $g_\gamma$ on $\Om$ as follows:
$$\chi_\gamma (x)=\tilde{\chi}\big(\sqrt{\kp H} t_\gamma (x)\big)\,,\quad \eta_\gamma (x)=\tilde{\chi}\big(\sqrt{\kp H} \zeta_\gamma(x)\big)\,,$$
\begin{equation}\label{eq:f,g}
f_\gamma (x)=\chi_\gamma(x) \exp \big(\hat \alpha \sqrt{\kp H}\,t_\gamma (x)\big)\quad{\rm and}\quad
g_\gamma (x)=\eta_\gamma(x) \exp \big(\hat \alpha \sqrt{\kp H}\,\zeta_\gamma (x)\big)\,,\end{equation}
where $\hat \alpha$ is a positive number whose value will be fixed later.

Let $h\in\{f_\gamma ,g_\gamma\}$. We multiply both sides of the first equation in   \eqref{eq:GL} by $h^2\overline\psi$ and then   integrate by parts over $\omega(\gamma)$. We get
\begin{align}
\int_{\omega(\gamma)} \Big( \big|(\nabla-i\kp H {\bf
A})(h\psi) \big|^2-\kappa^2h^2|\psi|^2-|\nabla h|^2|\psi|^2 \Big)\,dx\leq %\kp^2\int_{\Om_+(\epsilon)}\Big( |\psi|^2-|\psi|^4 \big) f^2\,dx  \nonumber\\
%                                    & \leq 
0\,. \label{exp1*}                                                  
\end{align}
In the computations below, the constant $C$ is independent of $\hat \alpha,\gamma,\kappa$ and $H$. We estimate the term involving $\nabla h$ as follows
$$
\int_{\omega(\gamma)}|\nabla h|^2|\psi|^2\,dx \leq  2 \hat \alpha^2\kp H\,  \|h \psi\|^2_{L^2(\omega(\lambda))}+ C\, \kp H\,  T(h)\,,
$$
where
\begin{equation}\label{eq:T(h)}
T(h):=
\begin{cases}
\displaystyle\int_{\omega(\gamma)\cap \{\sqrt{\kp H} t_\gamma (x)\leq 1\}}|\psi(x)|^2\,dx&{\rm if}~h=f_\gamma\,,\\
&\\
\displaystyle\int_{\omega(\gamma)\cap \{\sqrt{\kp H} \zeta_\gamma(x)\leq 1\}}|\psi (x) |^2\,dx&{\rm if}~h=g_\gamma\,.
\end{cases}
\end{equation}
In this way we infer from \eqref{exp1*} the following estimate
\begin{equation}
\int_{\omega(\gamma)} \Big( \big|(\nabla-i\kp H {\bf
A})(h\psi) \, (x)\big|^2-\kappa^2h(x)^2|\psi (x) |^2- 2 \hat \alpha^2\kappa Hh(x)^2|\psi (x) |^2\Big)\,dx\leq 
C\, \kappa H\, T(h)\,. \label{exp1}                                                  
\end{equation}
~\\
\subsection{Decay in the interior}~\\

Now we choose $$\gamma=\lambda =\frac\kappa H+\mu\,.
$$
Here $0<\mu<\mu_0$ and $\mu_0$ is sufficiently small such that $\mu_0+\frac1{c_1}<\beta_0\,$. This ensures that $\omega(\lambda)\not=\emptyset\,$.

We choose  in  \eqref{exp1}  the function $h=f_\lambda$, where $f_\lambda$ is the function introduced in \eqref{eq:f,g}. Note that $f_\lambda\psi\in H^1_0(\omega(\lambda))$. We may apply the result in Proposition~\ref{lem:qf}  to $\phi:=f_\lambda \psi$ and infer from
 \eqref{exp1}
$$
\int_{\omega(\lambda)}\Big(\big(1-C\kappa^{-\sigma(\alpha)})|B_0(x)|-2 \hat \alpha^2-\frac{\kappa}{H}\Big)f_\lambda^2\, |\psi|^2\,dx
\leq C\int_{\omega(\lambda)\cap \{\sqrt{\kp H} t_\lambda(x)\leq 1\}}|\psi(x)|^2\,dx\,.
$$
We then use that $|B_0(x)|\geq \lambda$  in $\omega(\lambda)$ and that $\lambda=\frac{\kappa}H+\mu\,$. Consequently,   for $0<\mu<\mu_0\,$, $0<\hat \alpha<\hat \alpha_0\,$, $\kappa\geq \kappa_0$\,, $\hat \alpha_0$ sufficiently small (for example $\hat \alpha_0^2 < \mu/4$)  and $\kappa_0$ sufficiently large
$$
\big(1-C\kappa^{-\sigma(\alpha)})|B_0(x)|- 2\hat \alpha^2-\frac{\kappa}{H}
\geq \frac{\mu}2\,.$$
Consequently, there exists a constant $C_\mu>0$ such that
$$
\begin{aligned}
\int_{\omega(\lambda)}f_\lambda(x)^2\, |\psi(x)|^2\,dx
&\leq C_\mu^{-1}\int_{\omega(\lambda)\cap \{\sqrt{\kp H} t_\lambda(x)\leq 1\}}|\psi (x) |^2\,dx\\
&\leq \frac{C}{\sqrt{\kappa H}}\quad\quad\quad {\rm by~}\eqref{eq:psi<1}\,.
\end{aligned}$$
%\end{align}		
Inserting this into \eqref{exp1} (with $h=f_\lambda$ and $T(f_\lambda)$ defined in \eqref{eq:T(h)}) achieves  the proof of Item~(1) in Theorem~\ref{thm:exp-dec}.

\subsection{Decay up to the boundary}

Now we prove Item~(2) in Theorem~\ref{thm:exp-dec}. Here we choose 
$$\gamma=\beta =\Theta_0^{-1}\,
\left(\frac\kappa H+\mu\right)\,.
$$
 Note that the estimate in Item~(2) of Theorem~\ref{thm:exp-dec} is trivially true if $\omega(\beta)=\emptyset\,$. So, we assume in the sequel that $\omega(\beta)\not=\emptyset\,$. This holds  if 
 $$H\geq c_1\kappa\,, \quad\, c_1> (\Theta_0\beta_1)^{-1}\,,$$ and $\mu$ is sufficiently small.

 We write \eqref{exp1} for $h=g_\beta\,$, where $g_\beta$ is introduced in \eqref{eq:f,g} and $T(g_\beta)$  in \eqref{eq:T(h)}. We apply  Proposition~\ref{lem:qf}  to $\phi:=g_\beta\psi$ and get
\begin{equation}\label{exp3}\int_{\omega(\beta)}\Big(\big(1-C\kappa^{-\sigma(\alpha)})\Theta_0|B_0(x)|-C\hat \alpha^2-\frac{\kappa}{H}\Big)g_\beta(x)^2|\psi(x)|^2\,dx\,
\leq C\int_{\omega(\beta)\cap \{\sqrt{\kp H} \zeta_\beta(x)\leq 1\}}|\psi(x)|^2\,dx\,.
\end{equation}
We decompose the integral over $\omega(\beta)$ as follows
$$\int_{\omega(\beta)}=\int_{\omega_{\rm int}(\beta)}+\int_{\omega_{\rm bnd}(\beta)}\,,$$
where
$$
\omega_{\rm int}(\beta)=\omega(\beta)\cap\big\{\sqrt{\kappa H}\,{\rm dist}(x,\partial\Omega)\geq 1\big\}\quad
{\rm and}\quad
\omega_{\rm bnd}(\beta)=\omega(\beta)\cap\big\{\sqrt{\kappa H}\,{\rm dist}(x,\partial\Omega)< 1\big\}\,.$$
From  \eqref{eq:t(x)}, we see that $\zeta_\beta(x)=t_\beta(x)$ and $f_\beta(x)=g_\beta(x)$ in $\omega_{\rm int}(\beta)$\,. Furthermore, from the definition of $\omega(\cdot)$ in \eqref{eq:om(l)}, we see that $\omega(\beta)\subset\omega(\lambda)$ and $t_\beta(x)\leq t_\lambda(x)$ on $\omega(\beta)$  if  $\beta\geq\lambda\,$. Hence, by the first item in Theorem~\ref{thm:exp-dec} (which is already proved for all $\hat \alpha\in(0,\hat \alpha_0)$),
\begin{equation}\label{exp3:int}
\int_{\omega_{\rm int}(\beta)}\Big|\big(1-C\kappa^{-\sigma(\alpha)})\Theta_0|B_0(x)|-2 \hat \alpha^2-\frac{\kappa}{H}\Big|\,g_\beta(x)^2\, |\psi(x)|^2\,dx\leq \frac{C}{\kappa}\,.
\end{equation}
Thus, we infer from \eqref{exp3} (and the bound $|\psi|\leq1$),
$$\int_{\omega_{\rm bnd}(\beta)}\Big(\big(1-C\kappa^{-\sigma(\alpha)})\Theta_0|B_0(x)|- 2 \hat \alpha^2-\frac{\kappa}{H}\Big)\, g_\beta(x)^2|\psi(x)|^2\,dx\leq \frac{C}{\kappa}
\,.$$
But,  in $\omega_{\rm bnd}(\beta)$, $\Theta_0|B_0(x)|\geq \frac\kappa H+\mu$\,,
 by definition of $\omega(\beta)$ and $\beta=\Theta_0^{-1}(\frac\kappa H+\mu)\,$. Thus, as long as $\hat \alpha$ is selected sufficiently small, we have
$$(1-C\kappa^{-\sigma(\alpha)})\, \Theta_0|B_0(x)|-2 \hat \alpha^2-\frac{\kappa}{H}\geq \frac\mu2\,,$$
and consequently, for some constant $\tilde C_\mu>0\,$,
$$\int_{\omega_{\rm bnd}(\beta)} g_\beta(x)^2|\psi(x)|^2\,dx\leq \frac{\tilde C_\mu}\kappa
\,.$$
We insert this estimate  and the one in \eqref{exp3:int} into \eqref{exp3} to get
$$\int_{\omega(\beta)} g_\beta(x)^2|\psi(x)|^2\,dx\leq \frac{\tilde C_\mu+C}\kappa
 \,.$$
Finally, by inserting this estimate into \eqref{exp1} (with $h=g_\beta$ and $T(g_\beta)$ defined in \eqref{eq:T(h)}), we  finish the proof of Item~(2) in Theorem~\ref{thm:exp-dec}.

\section{Surface energy}\label{sec:surf}
The analysis of surface superconductivity starts with the work of St.~James-de\,Gennes~\cite{SJ-dG}, who studied this phenomenon on the ball. In the last two decades, many papers adressed   the boundary concentration of the Ginzburg-Landau order parameter for general $2D$ and $3D$ samples in the presence of a constant magnetic field. We refer the reader to \cite{AH, CR, FoHe04, FK-am, FKP-jmpa, FHP, LuPa1, Pa02}.

In this section, we  study surface superconductivity in non-uniform magnetic fields. Our presentation not only generalizes the results known for the constant field case, but also  provides local estimates and new proofs, see Theorems~\ref{thm:ub-l4} and \ref{thm:lb-psi}\,. The most notable novelty in the proofs is that we do not use the $L^\infty$ elliptic estimates. 

\subsection{The surface energy function}\label{ss4.1}

In this subsection, we give the definition of the continuous function  $E_{\rm surf}:[1,\Theta_0^{-1}]\to (-\infty,0]$ introduced by X.B. Pan in \cite{Pa02} and which appeared after \eqref{eq:1.15*} and in Theorem \ref{thm:surface-sc}.  $\Theta_0$ is as before the de\,Gennes constant introduced in \eqref{eq:Theta0} with property \eqref{eq:Theta_0a}.

 For $b\in[1,\Theta_0^{-1}]\,$ and $R>0\,$, we consider the reduced Ginzburg-Landau  functional,
\begin{equation}\label{eq-hc2-redGL}
\mathcal V(U_R)\ni \phi \mapsto \mathcal E_{b,R}(\phi)=\int_{U_R}\left(b|(\nabla_{(\sigma,\tau)}+i\tau\fb)\phi|^2-|\phi|^2+\frac12|\phi|^4\right)\,d\sigma d\tau\,,
\end{equation}
where $\fb=(1,0)$ and $U_R$ is the domain,
\begin{equation}\label{eq-hc2-Uell}
U_R=(-R,R)\times(0,+\infty)\,,
\end{equation}
and 
\begin{equation}\label{eq-hc2-Confspace}
\mathcal V(U_R)=\{u\in L^2(U_R)~:~(\nabla_{(\sigma,\tau)}+i\tau\fb)u\in
L^2(U_R)~,~u(\pm R,\cdot)=0\,\}\,.
\end{equation}
 We  introduce the following ground state energy,
\begin{equation}\label{eq-hc2-d(ell)}
d(b,R)=\inf\{\mathcal E_{b,R}(\phi)~:~\phi\in\mathcal V(U_R)\}\,.
\end{equation}
In \cite{Pa02}, it is proved that, for all $b\in[1,\Theta_0^{-1}]\,$, there exists $E_{\rm surf}(b)\in(-\infty,0]$ such that
\begin{equation}\label{eq:Pa02}
E_{\rm surf}(b)=\lim_{R\to\infty}\frac{d(b,R)}{2R}\,.
\end{equation}
The surface energy function $\Es(\cdot)$ can be described by a simplified $1D$ problem as well   (cf. \cite{AH,  FHP} and finally \cite{CR} for the optimal result).   We collect some properties of $\Es(\cdot)$:
\begin{itemize}
\item  $\Es(\cdot)$ is a continuous and increasing function (cf. \cite{FKP-jmpa})\,;
\item $\Es(\Theta_0^{-1})=0$  and 
$\Es(b)<0$ for all $b\in[1,\Theta_0^{-1})$ (cf. \cite{FoHe04}). 
\end{itemize}

The next theorem gives the existence of some minimizer with good properties  (cf. \cite[Theorems~4.4 \& 5.3]{Pa02}):
\begin{thm}\label{thm-hc2-Pa02}
There exist positive constants $R_0$ and $M$  such that, for all $b\in[1,\Theta_0^{-1})$ and $R\geq R_0$:
\begin{enumerate}
\item The functional \eqref{eq-hc2-redGL} has a
minimizer $u_R$ in $\mathcal V(U_R)$  with the following properties: 
\begin{enumerate}
\item $u_R\not\equiv0$\,;
\item 
$\|u_R\|_\infty\leq1$\,;
\item 
$$ 
\frac 1R \, \int_{U_R\cap\{\tau\geq 3\}}\frac{\tau^2}{(\ln
\tau)^2}\left(|(\nabla_{(\sigma,\tau)}+i\tau\fb )u_R|^2+|u_R(\sigma, \tau)|^2+\tau^2|u_R(\sigma,\tau)|^4\right)\,d\sigma d\tau  \leq
M \,.$$
\end{enumerate}
\item The surface  energy function  $E_{\rm surf}(b)$ satisfies 
$$E_{\rm surf}(b)\leq \frac{d(b,R)}{2R}\leq E_{\rm surf}(b)+\frac{M}{R}\,.$$
\end{enumerate}
\end{thm}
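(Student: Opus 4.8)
The plan is to construct $u_R$ by the direct method on the unbounded strip $U_R=(-R,R)\times(0,+\infty)$, the only genuine difficulty being the loss of compactness as $\tau\to+\infty$; I would defeat this using that, when $b\geq1$, the energy density is nonnegative far from the boundary $\{\tau=0\}$. Concretely, for $\phi\in\mathcal V(U_R)$ supported in $\{\tau>T\}$ one partially Fourier-transforms in $\sigma$ to reduce the magnetic form to the fibered operators $-\partial_\tau^2+(\tau+\xi)^2$ on $(T,\infty)$ with a Dirichlet condition at $\tau=T$, whose ground state energy is $\geq1$ for every $\xi$ (by the harmonic-oscillator bound on $\R$ and domain monotonicity). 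Hence $b\int|(\nabla+i\tau\fb)\phi|^2-\int|\phi|^2\geq(b-1)\int|\phi|^2\geq0$ and, since the quartic term is nonnegative, $\mathcal E_{b,R}(\phi)\geq0$ on $\{\tau>T\}$. Inserting this into an IMS partition $\{\tau<2T\}\cup\{\tau>T\}$ shows $d(b,R)>-\infty$ and that all negative energy comes from the bounded region $\{\tau<2T\}$; along a minimizing sequence this forces the mass in $\{\tau>T\}$ to stay uniformly small, so no mass escapes to $\tau=+\infty$. Rellich compactness on bounded sub-domains then gives strong $L^2$ and $L^4$ convergence, and weak lower semicontinuity of the magnetic term produces a minimizer $u_R$. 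The bound $\|u_R\|_\infty\leq1$ follows as in the bulk case \eqref{eq:psi<1}: the radial truncation onto the unit disc does not increase the magnetic energy (a Kato-type contraction) and strictly lowers $-|u|^2+\tfrac12|u|^4$ where $|u|>1$. Finally $u_R\not\equiv0$ for $R\geq R_0$ is seen by exhibiting a negative-energy competitor $\phi=c\,\zeta(\sigma)e^{i\xi_0\sigma}f(\tau)$, where $f$ is the de\,Gennes ground state realizing $\Theta_0$ at optimal frequency $\xi_0$ and $\zeta$ is a plateau cutoff on $(-R,R)$ with $\|\zeta'\|_2=O(1)$: the quadratic part contributes $(b\Theta_0-1)c^2\|f\|_2^2\,(2R)+O(c^2)$ and the quartic part $\tfrac12c^4\|f\|_4^4\,(2R)$, so—since $b<\Theta_0^{-1}$ makes $b\Theta_0-1<0$—a small fixed $c$ and then large $R$ give $d(b,R)<0$.

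The weighted decay estimate in Item~(1)(c) is the main obstacle. I would test the Euler--Lagrange equation $-b(\nabla+i\tau\fb)^2u_R=(1-|u_R|^2)u_R$ (with the natural Neumann condition at $\tau=0$) against $w^2\overline{u_R}$, where $w=w(\tau)$ behaves like $\tau/\ln\tau$ for large $\tau$. The IMS identity yields
\[
b\int|(\nabla+i\tau\fb)(w\,u_R)|^2+\int w^2|u_R|^4=\int w^2|u_R|^2+b\int|w'|^2|u_R|^2 .
\]
Using the same barrier as above—the magnetic form dominates $\tfrac1b\int|w\,u_R|^2$ on $\{\tau>T\}$ because $b\geq1$—the weighted quadratic term is controlled with coefficient $(b-1)$ while $\int w^2|u_R|^4$ is controlled outright, the weight-derivative error being $\int|w'|^2|u_R|^2\lesssim\int|u_R|^2(\ln\tau)^{-2}$. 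The logarithmic correction in $w$ is exactly what renders this error subdominant, and it is also what is needed to survive the degenerate endpoint $b=1$, where the barrier $(b-1)$ vanishes and one must bootstrap the quartic control back into the quadratic and the magnetic contributions. Keeping track of the dependence on $b$ shows the resulting constant $M$ is uniform over $b\in[1,\Theta_0^{-1})$, giving the stated bound after dividing by $2R$.

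For Item~(2), the lower bound $\Es(b)\leq d(b,R)/(2R)$ follows from \emph{exact} subadditivity: the admissibility condition in \eqref{eq-hc2-Confspace} is Dirichlet in $\sigma$ and the potential $\tau\fb=(\tau,0)$ is independent of $\sigma$, so a minimizer on $U_R$ may be $\sigma$-translated and extended by zero without changing its energy, whence $d(b,nR)\leq n\,d(b,R)$; dividing by $2nR$ and letting $n\to\infty$ gives $\Es(b)=\inf_R d(b,R)/(2R)\leq d(b,R)/(2R)$, using \eqref{eq:Pa02}. For the matching rate I would prove the reverse inequality $d(b,2R)\geq2\,d(b,R)-C$: restricting the minimizer $u_{2R}$ to each half-strip and cutting it off near $\sigma=0$ produces two admissible competitors for $U_R$ (after recentering), the cutoff cost being $O(1)$ uniformly in $R$ thanks to $\|u_{2R}\|_\infty\leq1$ and the decay of Item~(1)(c), which bounds $\int_0^\infty|u_{2R}(\sigma,\cdot)|^2$ per unit $\sigma$-length. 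Combining $d(b,2R)\leq2\,d(b,R)$ with $d(b,2R)\geq2\,d(b,R)-C$ on the dyadic scale $R=2^nR_0$, the sequence $d(b,2^nR_0)/(2^{n+1}R_0)$ is non-increasing with geometrically summable decrements, so it converges to $\Es(b)$ and exceeds it by at most $C\,2^{-n}$; this gives $d(b,R)\leq2R\,\Es(b)+2M$ at dyadic $R$, and a short interpolation between consecutive dyadic values, again using the uniform cutoff cost, extends the bound to all $R\geq R_0$.

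The step I expect to be hardest is the weighted estimate of Item~(1)(c): away from it, both the compactness argument and the gluing in Item~(2) hinge on controlling cutoff costs uniformly in $R$, and that control is precisely what the decay estimate supplies. The delicate point inside it is the marginal regime $b\to1^+$, where the confinement barrier degenerates and only the polynomial weight $\tau/\ln\tau$—rather than a Gaussian Agmon weight—can be pushed through while keeping all constants uniform in $b$.
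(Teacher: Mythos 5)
You should first be aware that the paper does not prove most of this theorem: Item~(1) and the approximate superadditivity behind the upper bound in Item~(2) are cited from Pan \cite[Thms.~4.4 \& 5.3, Eq.~(5.4)]{Pa02}, and the only argument actually supplied is the lower bound $\Es(b)\leq d(b,R)/(2R)$, obtained by extending a minimizer of $\mathcal E_{b,R}$ periodically in $\sigma$ to $U_{(2n+1)R}$, which gives $d(b,(2n+1)R)\leq (2n+1)d(b,R)$ and the claim after dividing by $2(2n+1)R$ and letting $n\to\infty$. Your ``exact subadditivity'' paragraph is precisely this argument, and your direct-method construction of $u_R$, the truncation argument for $\|u_R\|_\infty\leq 1$, the quasi-mode $c\,\zeta(\sigma)e^{i\xi_0\sigma}f(\tau)$ showing $d(b,R)<0$, and the dyadic iteration of $d(b,2R)\geq 2d(b,R)-C$ are all reasonable reconstructions of what is in \cite{Pa02}.

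There is, however, a genuine gap exactly where you predicted one: Item~(1)(c). Your identity
$b\int|(\nabla+i\tau\fb)(wu_R)|^2+\int w^2|u_R|^4=\int w^2|u_R|^2+b\int|w'|^2|u_R|^2$
combined with the fibered bound $b\int|(\nabla+i\tau\fb)(wu_R)|^2\geq b\int w^2|u_R|^2\geq \int w^2|u_R|^2$ makes the term $\int w^2|u_R|^2$ cancel \emph{identically}; what survives is only $\int w^2|u_R|^4\leq b\int|w'|^2|u_R|^2$. With $w=\tau/\ln\tau$ this controls $\int \tau^2(\ln\tau)^{-2}|u_R|^4$ --- which is weaker than the required quartic weight $\tau^4(\ln\tau)^{-2}$ --- and it gives \emph{nothing} for the two terms $\tau^2(\ln\tau)^{-2}|u_R|^2$ and $\tau^2(\ln\tau)^{-2}|(\nabla+i\tau\fb)u_R|^2$ in the statement. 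This cannot be repaired by sharpening the spectral bound to $\geq(1+\delta)\int w^2|u_R|^2$: the Dirichlet realization of $-\partial_\tau^2+(\tau+\xi)^2$ on $(T,\infty)$ has ground energy tending to $1$ as $\xi\to-\infty$ (Gaussians centered far from $T$), so there is no uniform gap, and at $b=1$ the barrier $(b-1)$ you invoke vanishes. Taking $w=\tau^2/\ln\tau$ to reach the correct quartic weight produces the error $\int|w'|^2|u_R|^2\approx\int\tau^2(\ln\tau)^{-2}|u_R|^2$, i.e.\ exactly the unproved quadratic bound --- the argument is circular. The ``bootstrap'' you allude to is the actual content of Pan's Theorem~5.3 and is not supplied. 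Two smaller points: the $O(1)$ gluing cost in your superadditivity step is asserted at the fixed cut $\sigma=0$, but (1)(c) is only a $\sigma$-averaged bound, so you must average over cut locations and use monotonicity of $d(b,\cdot)$ to find a cheap cut; and your nontriviality competitor yields $R_0\sim(1-b\Theta_0)^{-1}$, which is not uniform over $b\in[1,\Theta_0^{-1})$ as the statement requires.
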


The upper bound in  Item (2) above  results from a property of 
superadditivity of $d(b,R)$, see \cite[Eq.~(5.4)]{Pa02}.  The lower bound in  Item (2) above is not explicitly  mentioned in \cite{Pa02}, but its derivation is easy \cite[Proof of Thm~2.1, Step~2, p. 351]{FK-cpde} and can be sketched in the following way.
Let $R>0$ and $n\in\mathbb N$. Let $u_R \in H^1_0(U_R)$ be a minimizer of the functional in \eqref{eq-hc2-redGL}. We extend 
$u_R$ to a function in $H^1_0(U_{(2n+1)R})$ by periodicity as follows
$$u_R(x_1+2R,x_2)=u_R(x_1,x_2)\,.$$
Consequently, 
$$d(b,(2n+1)R)\leq \mathcal E_{b,(2n+1)R}(u_R)=(2n+1)d(b,R)\,.$$
Dividing both sides of the preceding inequality by $2(2n+1)R$ and sending $n$ to $+\infty\,$, we get
$$\Es(b)\leq \frac{d(b,R)}{2R}\,.$$

\subsection{Boundary coordinates}

The analysis of the boundary effects is performed in specific coordinates  valid in a tubular
neighborhood of $\partial\Omega$. We call these coordinates {\it boundary coordinates}.  For more details on these
coordinates, see for instance \cite[Appendix F]{FH-b}.

For a sufficiently small $t_0>0$, we introduce the open set
$$\Omega(t_0)=\{x\in\mathbb R^2~:~{\rm
  dist}(x,\partial\Omega)<t_0\}\,.$$
In the sequel, let $x_0\in\partial\Omega$ be a fixed point. Let $s\mapsto\gamma_{x_0}(s)$ be the  parametrization of $\partial\Omega$
by arc-length such that $\gamma_{x_0}(0)=x_0$. Also, let $\nu(s)$ be the unit inward normal of
$\partial\Omega$ at
  $\gamma_{x_0}(s)$. The orientation of $\gamma_{x_0}$ is selected in the counter clock-wise direction, hence
  $${\rm det}\Big(\gamma_{x_0}'(s),\nu(s)\Big)=1\,.$$
Define the transformation
\begin{align}\label{eq:19}
\Phi_{x_0}:
\left[-\frac{|\partial\Omega|}2,\frac{|\partial\Omega|}2\right)\,\times
(0,t_0)\ni(s,t)\mapsto \gamma_{x_0}(s)+t\nu(s)\in \Omega(t_0)\,.
\end{align}
We may choose $t_0$ sufficiently  small (independently from the choice of the point $x_0\in\partial\Omega$) such that the transformation 
in \eqref{eq:19} is a diffeomorphism. The  Jacobian of this transformation is $|D\Phi_{x_0}|=1-tk
 (s)$, where $k$ denotes the curvature of $\partial\Omega$. For $x\in\Omega(t_0)$, we put
$$\Phi^{-1}_{x_0}(x)=(s(x),t(x))\,.$$
In particular, we get the explicit formulae  \begin{equation} \label{eq-hc2-t(x)}
t(x)={\rm dist}(x,\partial\Omega)\quad{\rm and}\quad s(x_0)=0\,.\end{equation}

Using  $\Phi_{x_0}$, we may associate to any
function $u\in L^2(\Omega)$, a function $\widetilde u=T_{\Phi_{x_0}} u$ defined in $
[-\frac{|\partial\Omega|}2,
\frac{|\partial\Omega|}2)\,\times\,(0,t_0)$ by,
\begin{equation}\label{VI-utilde}
\widetilde u(s,t)=u(\Phi_{x_0}(s,t))\,.
\end{equation}
Also, for every vector field $\Ab\in H^1(\Omega)$, we assign the vector field  $$\tilde\Ab(s,t)=\Big(\tilde\Ab_1(s,t),\tilde\Ab_2(s,t)\Big)$$ with 
\begin{equation}\label{eq:tildeA} 
\left\{
\begin{array}{rl}
\tilde \Ab_1(s,t) & = a(s,t)\Ab\Big( \Phi_{x_0}(s,t)\Big)\cdot\gamma_{x_0}'(s)\,,\\ 
\tilde \Ab_2 (s,t)& = \Ab \Big(
\Phi_{x_0}(s,t)\Big)\cdot\nu(s)\,,
\end{array}
\right.
\end{equation}
and
$$
  a(s,t) =1-t\,k(s) \,.
$$
The following change of variable formulas hold.
\begin{proposition}\label{hc2-App:transf}
For $u\in H^1(\Omega)$ and $\Ab\in H^1(\Omega;\R^2)$, we have:
\begin{multline}\label{App:qfstco}
\int_{\Omega( t_0)}\left|(\nabla-i\Ab)u\right|^2dx =
\int_{0}^{t_0}\int_{-\frac{|\partial\Omega|}2}^{\frac{|\partial\Omega|}2}
\left[ [a(s,t)]^{-2}|(\partial_s-i\tilde
\Ab_1)\widetilde u|^2+|(\partial_t-i\tilde \Ab_2)\widetilde
u|^2\right]a(s,t)\,dsdt\,,
\end{multline}
and
\begin{equation}\label{App:nostco}
\int_{\Omega(t_0)}
|u(x)|^2\,dx=\int_{0}^{t_0}\int_{-\frac{|\partial\Omega|}2}^{\frac{|\partial\Omega|}2}
 |\widetilde u(s,t)|^2a(s,t)\,dsdt\,.
\end{equation}
\end{proposition}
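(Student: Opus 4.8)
The plan is to reduce everything to the elementary geometry of the moving frame $\{\gamma_{x_0}'(s),\nu(s)\}$ attached to the tubular neighborhood, which is orthonormal by construction, and then to expand the magnetic gradient in this frame. The $L^2$ identity \eqref{App:nostco} is immediate: it is just the change-of-variables formula for the diffeomorphism $\Phi_{x_0}$ of \eqref{eq:19}, whose Jacobian is $|D\Phi_{x_0}| = a(s,t) = 1 - tk(s)$, applied to $\widetilde u(s,t) = u(\Phi_{x_0}(s,t))$. The real content is the quadratic-form identity \eqref{App:qfstco}, and here the whole point is to show that the coordinate frame is orthogonal so that the cross terms vanish.

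First I would compute the coordinate vector fields. Differentiating $\Phi_{x_0}(s,t) = \gamma_{x_0}(s) + t\nu(s)$ gives $\partial_t\Phi_{x_0} = \nu(s)$ and $\partial_s\Phi_{x_0} = \gamma_{x_0}'(s) + t\nu'(s)$. The Frenet relation compatible with the orientation convention $\det(\gamma_{x_0}'(s),\nu(s)) = 1$ reads $\nu'(s) = -k(s)\gamma_{x_0}'(s)$ (indeed $|\nu|\equiv 1$ forces $\nu'\perp\nu$, hence $\nu'\parallel\gamma_{x_0}'$ in the plane, and the sign is fixed by requiring $a = \det(\partial_s\Phi_{x_0},\partial_t\Phi_{x_0}) = 1 - tk$). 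Thus $\partial_s\Phi_{x_0} = (1 - tk(s))\gamma_{x_0}'(s) = a(s,t)\gamma_{x_0}'(s)$, so the two coordinate directions are orthogonal with $|\partial_s\Phi_{x_0}| = a$ and $|\partial_t\Phi_{x_0}| = 1$; equivalently $\{\gamma_{x_0}',\nu\}$ is an orthonormal frame adapted to the coordinates.

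Next I would decompose the magnetic gradient in this frame. By the chain rule, $\partial_s\widetilde u = (\nabla u)(\Phi_{x_0})\cdot\partial_s\Phi_{x_0} = a\,(\nabla u)(\Phi_{x_0})\cdot\gamma_{x_0}'$ and $\partial_t\widetilde u = (\nabla u)(\Phi_{x_0})\cdot\nu$, so the tangential and normal components of $\nabla u$ are $a^{-1}\partial_s\widetilde u$ and $\partial_t\widetilde u$. Comparing with the definitions in \eqref{eq:tildeA}, namely $\tilde\Ab_1 = a\,\Ab(\Phi_{x_0})\cdot\gamma_{x_0}'$ and $\tilde\Ab_2 = \Ab(\Phi_{x_0})\cdot\nu$, the components of $(\nabla - i\Ab)u$ in the frame $\{\gamma_{x_0}',\nu\}$ become $a^{-1}(\partial_s - i\tilde\Ab_1)\widetilde u$ and $(\partial_t - i\tilde\Ab_2)\widetilde u$. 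Since the frame is orthonormal, the pointwise Euclidean norm is the sum of the squared moduli of these two components, giving $|(\nabla - i\Ab)u|^2 = a^{-2}|(\partial_s - i\tilde\Ab_1)\widetilde u|^2 + |(\partial_t - i\tilde\Ab_2)\widetilde u|^2$; multiplying by the Jacobian $a$ and integrating over $\Omega(t_0)$ yields \eqref{App:qfstco}.

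There is no serious analytic obstacle here; the only step requiring care — and the one I would single out — is the frame computation, i.e.\ getting the Frenet sign right so that $\partial_s\Phi_{x_0} = a\gamma_{x_0}'$ is genuinely tangential and orthogonal to $\partial_t\Phi_{x_0} = \nu$. Once the coordinate frame is shown to be orthogonal with the stated lengths, the cross terms in $|(\nabla - i\Ab)u|^2$ drop out automatically and both identities follow by inserting $a(s,t)$ and applying the change-of-variables formula. For general $u\in H^1(\Omega)$ and $\Ab\in H^1(\Omega;\R^2)$ I would first carry out the computation for smooth $u$ and $\Ab$ and then pass to the $H^1$ closure by density, using that $\Phi_{x_0}$ is a smooth diffeomorphism with $a$ bounded away from $0$ once $t_0$ is chosen small.
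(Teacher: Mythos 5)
Your proof is correct and is exactly the standard computation that the paper delegates to \cite[Appendix F]{FH-b}: the Frenet relation $\nu'=-k\gamma_{x_0}'$ gives $\partial_s\Phi_{x_0}=a\,\gamma_{x_0}'$, the frame is orthogonal, the cross terms vanish, and the Jacobian factor $a$ produces both identities. The sign convention, the chain-rule decomposition matching \eqref{eq:tildeA}, and the closing density argument are all handled properly, so there is nothing to add.
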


Recall the vector field $\Ab_0$ introduced in \eqref{eq:A0}.
Up to a gauge transformation, the vector field $\Ab_0$  admits a useful (local) representation in the coordinate system $(s,t)$.

For $x_0\in\partial\Omega$ and $\ell\in(0,t_0)$,  we introduce the set $ V_{x_0}(\ell) \subset\Omega(t_0)$  as follows:
\begin{equation}\label{eq-hc2-U12}
V_{x_0}(\ell)=\Phi_{x_0}\Big((-\ell,\ell)\times(0,\ell)\Big)\,.
\end{equation}
\begin{lem}\label{eq-hc2-gaugeT}  
There exists $r_0>0$ such that, for any $x_0$ in $\partial\Omega\,$,  there exists $g_{x_0}$ in \break $ C^\infty( (-2r_0\,,\,2r_0)\times(0\,,r_0))$ such that 
$$\tilde\Ab_0(s,t) -\nabla g_{x_0}(s,t) =
\left(-t+k(s)\frac{t^2}2,0\right)\quad{\rm
in}~(-2r_0\,,\,2r_0)\times(0\,,r_0)\,.
$$
Here $\tilde\Ab_0$ is the vector field associated with $\Ab_0$ by the formulas in \eqref{eq:tildeA} and one can take  $r_0=\min(t_0,\frac{|\partial\Omega|}{4})$. 
\end{lem}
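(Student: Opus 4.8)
The plan is to exhibit the gauge function $g_{x_0}$ explicitly by first computing the components of $\tilde\Ab_0$ in boundary coordinates, and then finding an antiderivative that removes everything except the prescribed first component. Recall that $\Ab_0(x)=\frac12(-x_2,x_1)$ satisfies $\curl\Ab_0=1$. Since the change of coordinates $\Phi_{x_0}$ is volume-distorting with Jacobian $a(s,t)=1-tk(s)$, the magnetic field transforms as a $2$-form: the pulled-back field $\widetilde{B}(s,t)=\curl\Ab_0$ expressed in the $(s,t)$ chart equals $a(s,t)=1-tk(s)$ (up to the sign fixed by the orientation convention $\det(\gamma_{x_0}'(s),\nu(s))=1$). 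Concretely, I would verify that in boundary coordinates the identity $\partial_s\tilde\Ab_{0,2}-\partial_t\tilde\Ab_{0,1}=1-tk(s)$ holds, since the line integral of $\Ab_0$ around any loop equals the enclosed area, which in these coordinates is computed with the weight $a(s,t)\,ds\,dt$.

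Next I would look for $g_{x_0}$ so that the \emph{second} component is gauged away entirely and the \emph{first} component matches the target. Setting
$$
\tilde\Ab_{0,1}(s,t)-\partial_s g_{x_0}(s,t)=-t+k(s)\tfrac{t^2}2\,,\qquad
\tilde\Ab_{0,2}(s,t)-\partial_t g_{x_0}(s,t)=0\,,
$$
the second equation determines $g_{x_0}$ up to a function of $s$ by integrating $\partial_t g_{x_0}=\tilde\Ab_{0,2}$ in $t$ from $0$; the remaining freedom (the additive function of $s$) is then fixed to satisfy the first equation. The consistency of this overdetermined system for $g_{x_0}$ is exactly the cross-derivative condition
$$
\partial_s\tilde\Ab_{0,2}-\partial_t\tilde\Ab_{0,1}
=\partial_t\!\left(t-k(s)\tfrac{t^2}2\right)=1-tk(s)\,,
$$
which is precisely the field identity established in the first step. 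Hence the $1$-form $\tilde\Ab_0-(-t+k(s)\frac{t^2}2,0)\,$ is closed on the simply connected rectangle $(-2r_0,2r_0)\times(0,r_0)$, so it is exact and $g_{x_0}$ exists; taking $r_0=\min(t_0,\frac{|\partial\Omega|}{4})$ keeps us inside the chart domain uniformly in $x_0$.

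The smoothness $g_{x_0}\in C^\infty$ follows because $\Phi_{x_0}$ is a smooth diffeomorphism, $k(s)$ is smooth by the $C^\infty$ regularity of $\partial\Omega$, and $\Ab_0$ is a polynomial, so $\tilde\Ab_{0,j}$ are smooth and the integration producing $g_{x_0}$ preserves smoothness. I expect the main technical obstacle to be the bookkeeping in Step~1: correctly computing $\tilde\Ab_{0,1}$ and $\tilde\Ab_{0,2}$ from \eqref{eq:tildeA}, keeping track of the factor $a(s,t)=1-tk(s)$ in $\tilde\Ab_{0,1}$ and of the orientation sign, and confirming that the resulting curl identity yields exactly $1-tk(s)$ rather than a term with the wrong sign or a missing curvature correction. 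Once that identity is pinned down, the existence and the explicit form of $g_{x_0}$ are immediate from exactness on the simply connected rectangle, and the uniform choice of $r_0$ is geometric.
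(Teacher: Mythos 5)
Your argument is correct and is exactly the standard proof: the paper itself does not write out a proof but defers to \cite[Lem.~F.1.1]{FH-b}, whose argument is precisely the one you outline (the pullback identity $\partial_s\tilde\Ab_{0,2}-\partial_t\tilde\Ab_{0,1}=a(s,t)=1-tk(s)$, coming from the transformation of $\curl\Ab_0\,dx_1\wedge dx_2$ under $\Phi_{x_0}$ with the orientation convention $\det(\gamma_{x_0}',\nu)=1$, followed by exactness of the resulting closed $1$-form on the simply connected rectangle). The choice $r_0=\min(t_0,\tfrac{|\partial\Omega|}{4})$ and the smoothness claim are handled correctly as well.
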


For the proof of Lemma~\ref{eq-hc2-gaugeT}, we refer to \cite[Proof of Lem.~F.1.1]{FH-b}. Note that Lemma~F.1.1 in \cite{FH-b} is announced for a more general setting.

We will use Lemma~\ref{eq-hc2-gaugeT} to estimate the following Ginzburg-Landau energy  of $u$,
\begin{equation}\label{eq:GL-loc}
\mathcal G_0\big(u,\Ab_0;V_{x_0}(\ell)\big)=\int_{V_{x_0}(\ell)}\Big(|(\nabla-ih_{\rm ex}\Ab_0)u|^2-\kappa^2|u|^2+\frac{\kappa^2}2|u|^4\Big)\,dx\,.
\end{equation}
\begin{lem}\label{lem:est-E0}
There exist constants $C>0$, $\ell_0>0$ and $\kappa_0>0$ such that, 
for all  $x_0\in\partial\Omega$, $\ell\in(0,\ell_0)$, $\kappa\geq \kappa_0\,$, $\kappa^2\leq h_{\rm ex}\leq\Theta_0^{-1}\kappa^2$\,,
and  $u\in H^1_0(V_{x_0}(\ell))\cap L^\infty(V_{x_0}(\ell))$ satisfying $\|u\|_\infty\leq 1$\,,
the following two inequalities hold:
\begin{equation}\label{eq:lbE0}
\mathcal G_0\big(u,\Ab_0;V_{x_0}(\ell)\big)\geq {2} \frac{\kappa^2\ell}{\sqrt{h_{\rm ex}}}\Es\left(\frac{h_{\rm ex}}{\kappa^2}\right)
-C\kappa\ell\Big(\ell+\kappa^3\ell^4+\kappa\ell^2\Big)\,,
\end{equation} 
and
\begin{equation}\label{eq:upE0}
\mathcal G_0\big(u,\Ab_0;V_{x_0}(\ell)\big)\leq (1+C\ell) \frac{\kappa^2}{h_{\rm ex}}\mathcal E_{h_{\rm ex}/\kappa^2,\sqrt{h_{\rm ex}}\,\ell}
(\widetilde v)+C\kappa\ell\Big(\kappa^3\ell^4+\kappa\ell^2\Big)\,.
\end{equation}
where $\mathcal E_{\cdot,\cdot }$ is the functional introduced in \eqref{eq-hc2-redGL} and
$$\widetilde v(\sigma,\tau)=\exp\left(-ih_{\rm ex}\,g_{x_0}\Big(\frac{\sigma}{\sqrt{h_{\rm ex}}},\frac{\tau}{\sqrt{h_{\rm ex}}}\Big)\right)\widetilde u\Big(\frac{\sigma}{\sqrt{h_{\rm ex}}},\frac{\tau}{\sqrt{h_{\rm ex}}}\Big)\,.$$
Here $\widetilde u$ is the function associated with $u$ by \eqref{VI-utilde} and  $g_{x_0}$ is  introduced in Lemma~\ref{eq-hc2-gaugeT}.
\end{lem}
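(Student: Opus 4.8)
The plan is to reduce the local energy $\mathcal G_0(u,\Ab_0;V_{x_0}(\ell))$ to the half-strip functional $\mathcal E_{b,R}$ by the usual three steps: pass to boundary coordinates, apply the exact gauge of Lemma~\ref{eq-hc2-gaugeT}, and rescale by $\sqrt{h_{\rm ex}}$. First, using Proposition~\ref{hc2-App:transf} with the field $h_{\rm ex}\Ab_0$, I write the kinetic and potential parts of $\mathcal G_0$ in coordinates $(s,t)$ over $(-\ell,\ell)\times(0,\ell)$, with Jacobian $a(s,t)=1-tk(s)$. The gauge $g_{x_0}$ of Lemma~\ref{eq-hc2-gaugeT} replaces $h_{\rm ex}\tilde\Ab_0$ by the explicit field $h_{\rm ex}(-t+\tfrac12 k(s)t^2,0)$ and $\tilde u$ by its phase rotation, leaving moduli unchanged. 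I then set $\sigma=\sqrt{h_{\rm ex}}\,s$, $\tau=\sqrt{h_{\rm ex}}\,t$, so the domain becomes $(-R,R)\times(0,R)$ with $R=\sqrt{h_{\rm ex}}\,\ell$, the coupling is $b=h_{\rm ex}/\kappa^2\in[1,\Theta_0^{-1}]$ (so $\sqrt{h_{\rm ex}}\asymp\kappa$), and the rescaled function is exactly the $\tilde v$ of the statement. A direct computation gives the exact identity
\[
\mathcal G_0(u,\Ab_0;V_{x_0}(\ell))=\int_{U_R}\Big(a^{-1}\big|(\partial_\sigma+i\tau)\tilde v-ip\tilde v\big|^2+a\,|\partial_\tau\tilde v|^2\Big)\,d\sigma\,d\tau+\frac{\kappa^2}{h_{\rm ex}}\int_{U_R}a\,\Big(-|\tilde v|^2+\tfrac12|\tilde v|^4\Big)\,d\sigma\,d\tau,
\]
where $a=1-(\tau/\sqrt{h_{\rm ex}})k$ and $p=k\tau^2/(2\sqrt{h_{\rm ex}})$ are the Jacobian/curvature contributions, both of relative size $O(\ell)$ on the box since $\tau<R$ forces $\tau/\sqrt{h_{\rm ex}}<\ell$.

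The error bookkeeping proceeds from this identity, using $h_{\rm ex}\asymp\kappa^2$. The Jacobian obeys $|a-1|\le C\ell$, and the curvature cross-term is controlled by Young's inequality $2p|(\partial_\sigma+i\tau)\tilde v|\,|\tilde v|\le \ell|(\partial_\sigma+i\tau)\tilde v|^2+\ell^{-1}p^2|\tilde v|^2$; since $\|\tilde v\|_\infty\le1$ and $\int_{U_R}\tau^4\,d\sigma d\tau=O(R^6)=O(h_{\rm ex}^3\ell^6)$, one finds $\ell^{-1}\int_{U_R} p^2|\tilde v|^2=O(\kappa^4\ell^5)$, while the Jacobian mismatch in the (bounded, nonpositive) potential costs $\tfrac{\kappa^2}{h_{\rm ex}}\,C\ell\,R^2=O(\kappa^2\ell^3)$. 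For the upper bound I take these estimates in the direction that keeps the full factor $(1+C\ell)$ on the kinetic term and a smaller factor on the potential; recombining into $\tfrac{\kappa^2}{h_{\rm ex}}\mathcal E_{b,R}(\tilde v)$ and using $-|\tilde v|^2+\tfrac12|\tilde v|^4\ge-\tfrac12$ to absorb the coefficient discrepancy yields \eqref{eq:upE0} with error $C\kappa\ell(\kappa^3\ell^4+\kappa\ell^2)$. Since $u\in H^1_0(V_{x_0}(\ell))$ forces $\tilde v$ to vanish on $\sigma=\pm R$ and on $\tau=R$, extending $\tilde v$ by zero makes it an admissible element of $\mathcal V(U_R)$.

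For the lower bound I estimate in the opposite direction, which leaves a deficient factor $(1-C\ell)$ on the kinetic energy. The key algebraic observation is that the resulting expression equals $\tfrac{\kappa^2}{h_{\rm ex}}\mathcal E_{(1-C\ell)b,R}(\tilde v)$, so after subtracting the $O(\kappa^4\ell^5)$ and $O(\kappa^2\ell^3)$ errors I obtain $\mathcal G_0\ge \tfrac{\kappa^2}{h_{\rm ex}}d\big((1-C\ell)b,R\big)-(\text{errors})\ge \tfrac{\kappa^2}{h_{\rm ex}}\,2R\,\Es\big((1-C\ell)b\big)-(\text{errors})$, the last inequality being the exact bound $d(\cdot,R)\ge 2R\,\Es(\cdot)$ recorded before the statement. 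It remains to convert the shifted argument $(1-C\ell)b$ back to $b$: since $b\mapsto\mathcal E_{b,R}(\cdot)$ is affine, $d(\cdot,R)$ is an infimum of affine functions, hence concave, and so is its limit $\Es$; concavity yields a local Lipschitz bound $\Es(b)-\Es((1-C\ell)b)\le C\ell\,b$, which contributes precisely $\tfrac{\kappa^2}{h_{\rm ex}}\,2R\,C\ell\,b=O(\kappa\ell^2)$, the first error term in \eqref{eq:lbE0}. Collecting $\kappa\ell^2$, $\kappa^2\ell^3$ and $\kappa^4\ell^5$ gives the stated remainder $C\kappa\ell(\ell+\kappa^3\ell^4+\kappa\ell^2)$.

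I expect the main obstacle to be not the change of variables but this last step: one must make the dependence of $\Es$ on $b$ quantitative (via concavity of $d(b,R)$) in order to turn the multiplicative kinetic loss $(1-C\ell)$ into an additive error of the advertised size, and one must ensure the Lipschitz constant is uniform for $b$ ranging over the relevant compact subinterval of $[1,\Theta_0^{-1}]$. Everything else is routine estimation built on the exact identity above together with $\|\tilde v\|_\infty\le1$ and the support of $\tilde v$.
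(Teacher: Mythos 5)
Your reduction to the half-strip functional (boundary coordinates via Proposition~\ref{hc2-App:transf}, the gauge of Lemma~\ref{eq-hc2-gaugeT}, the $\sqrt{h_{\rm ex}}$-rescaling) and your error bookkeeping for the curvature cross-term ($O(\kappa^4\ell^5)$) and the Jacobian mismatch on the potential ($O(\kappa^2\ell^3)$) coincide with the paper's proof, and the upper bound \eqref{eq:upE0} goes through as you describe. The genuine gap is in your lower bound, precisely at the step you yourself flag as the main obstacle. You push the deficient kinetic factor $(1-C\ell)$ into the coupling constant, obtaining $\tfrac{\kappa^2}{h_{\rm ex}}\mathcal E_{(1-C\ell)b,R}(\tilde v)\ge \tfrac{\kappa^2}{h_{\rm ex}}d\big((1-C\ell)b,R\big)$, and then invoke $d(b',R)\ge 2R\,\Es(b')$ together with a Lipschitz bound for $\Es$. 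But the lemma is stated for all $h_{\rm ex}\in[\kappa^2,\Theta_0^{-1}\kappa^2]$, so $b=h_{\rm ex}/\kappa^2$ may equal $1$ or be within $O(\ell)$ of $1$; then $b'=(1-C\ell)b<1$, and for $b'<1$ one has $d(b',R)=-\infty$ (the strip $U_R$ has infinite area in $\tau$, and for subcritical coupling a trial function spread over a region of area $A$ gives energy $\sim -\tfrac12(1-b')^2A\to-\infty$). The inequality $d(\cdot,R)\ge 2R\,\Es(\cdot)$ is only available on $[1,\Theta_0^{-1}]$, and your concavity argument cannot supply a Lipschitz constant that is uniform up to the left endpoint $b=1$, since a concave function that is finite on $[1,\Theta_0^{-1}]$ and $-\infty$ below $1$ may have infinite right derivative at $1$. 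So the chain of inequalities becomes vacuous exactly on part of the stated parameter range.

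The fix — and the route the paper takes — is not to transfer the loss into $b$ at all: the Jacobian factor $a=1-tk(s)$ multiplies the \emph{entire} integrand, so after paying an additive $O(\kappa^2\ell^3)$ to realign the coefficient of the $-|\tilde v|^2$ term, one gets $\mathcal G_0\ge (1-C\ell)\,\tfrac{\kappa^2}{h_{\rm ex}}\,\mathcal E_{b,R}(\tilde v)-C(\kappa^4\ell^5+\kappa^2\ell^3)$ with the \emph{same} $b$. Then $\mathcal E_{b,R}(\tilde v)\ge d(b,R)\ge 2R\,\Es(b)$, and since $\Es(b)\le 0$ the prefactor $(1-C\ell)\in(0,1)$ only improves the bound: $(1-C\ell)\,2R\,\Es(b)\ge 2R\,\Es(b)$ (its effect is in any case controlled by $C\ell\cdot 2R\,\tfrac{\kappa^2}{h_{\rm ex}}|\Es(b)|=O(\kappa\ell^2)$, the first error term in \eqref{eq:lbE0}). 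No continuity of $\Es$ in $b$ is needed anywhere. With this single change your argument closes.
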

\begin{proof}
Using  Proposition~\ref{hc2-App:transf}  and the assumptions on $u$, we may write, for two positive constants $C_0,C$ and for all $0<\ell<\min\big(\frac12 C_0^{-1},t_0\big)$,
$$\mathcal G_0\big(u,\Ab_0;V_{x_0}(\ell)\big)\geq (1-C_0\ell)\int_0^{\ell}\int_{-\ell}^{\ell}\Big(|(\nabla-ih_{\rm ex}\widetilde\Ab_0)\widetilde u|^2-\kappa^2|\widetilde u|^2+\frac{\kappa^2}2|\widetilde u|^4\Big)\,dsdt-C\kappa^2\ell^3\,.$$

Let $g:=g_{x_0}$ be the function defined in Lemma~\ref{eq-hc2-gaugeT} and 
$\widetilde w(s,t)=e^{-i h_{\rm ex}g(s,t)}\widetilde u(s,t)\,$. Using the Cauchy-Schwarz inequality, we get the existence of $C>0$ such that
$$\mathcal G_0\big(u,\Ab_0;V_{x_0}(\ell)\big)\geq(1-2C_0\ell)\int_0^{\ell}\int_{-\ell}^{\ell}\Big(|(\nabla+ih_{\rm ex} t\fb)\widetilde w|^2-\kappa^2|\widetilde w|^2+\frac{\kappa^2}2|\widetilde w|^4\Big)\,dsdt-C\kappa^4\ell^5-C\kappa^2\ell^3\,.$$
Here $\fb=(1,0)$. We apply the change of variables $(\sigma,\tau)=(\sqrt{h_{\rm ex}}\,s,\sqrt{h_{\rm ex}}\,t)$ and
$\widetilde v(\sigma,\tau)=\widetilde w(s,t)$ to get
$$\mathcal G_0(u,\Ab_0;V_{x_0}(\ell))\geq(1-2C_0\ell)\frac{\kappa^2}{h_{\rm ex}}\mathcal E_{h_{\rm ex}/\kappa^2,R}
({\widetilde v})-C\kappa^4\ell^5-C\kappa^2\ell^3\,,$$
where $R=h_{\rm ex}^\frac 12 \,\ell$ and $\mathcal E_{h_{\rm ex}/\kappa^2,R}$ is the functional introduced in \eqref{eq-hc2-redGL} for $b= {h_{\rm ex}}/\kappa^2\,$.

  Note that we extended $\widetilde v$ by $0$, which is possible because $u\in H^1_0(V_{x_0}(\ell))$. Using the  second Item   in Theorem~\ref{thm-hc2-Pa02} and the assumption $C_0 \ell < \frac 12\,$, 
we get
$$\mathcal G_0(u,\Ab_0;V_{x_0}(\ell))\geq 2 (1-2C_0\ell)\, \frac{\kappa^2}{h_{\rm ex}}\,(h_{\rm ex}^\frac 12 \,\ell)\,\Es\left(\frac{h_{\rm ex}}{\kappa^2}\right)-C\kappa^4\ell^5-C\kappa^2\ell^3\,.$$
This proves the lower bound  \eqref{eq:lbE0} in Lemma~\ref{lem:est-E0}\,.\\
Similarly, using Lemma~\ref{eq-hc2-gaugeT}, the Cauchy-Schwarz inequality on the kinetic term
and a change of variables,  we get the upper bound \eqref{eq:upE0}  of Lemma~\ref{lem:est-E0}\,.
\end{proof}
~

\subsection{Existence of surface superconductivity}~

The proof of Theorem~\ref{thm:surface-sc} follows from the exponential decay stated in Theorem~\ref{thm:exp-dec} and the following  result:

\begin{thm}\label{thm:surface-sc*}
Suppose that Assumption~\ref{ass:mf*} holds and that $b>\beta_0^{-1}$, where $\beta_0$ is the constant introduced in
\eqref{eq:ep0}. There exists $\rho\in(0,1)$ such that the following is true.

Let $x_0\in\partial\Omega$ such that  $ \frac 1b < |B_0(x_0)|<\frac{1}{ \Theta_0 b}\,$. 
If $(\psi,\Ab)_{\kappa,H}$ is a minimizer of the functional in \eqref{eq-3D-GLf} for $H=b\kappa\,$, then
\begin{equation}\label{eq:1.13}
\lim_{\kappa\to +\infty}\left(2\kappa^{1+\rho}\int_{V_{x_0}(\kappa^{-\rho})} |\psi(x)|^4\,dx\right)=-2\sqrt{\frac1{b|B_0(x_0)|}}\Es\big(b|B_0(x_0)|\big)>0\,,
\end{equation}
and
\begin{equation}\label{eq:1.14}
\lim_{\kappa\to +\infty}\Big(2\kappa^{\rho-1}\mathcal E\big(\psi,\Ab;V_{x_0}(\kappa^{-\rho})\big)\Big)=\sqrt{\frac1{b|B_0(x_0)|}}\Es\big(b|B_0(x_0)|\big)<0\,.
\end{equation}
\end{thm}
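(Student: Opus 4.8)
The plan is to compare, on the shrinking boundary box $V_{x_0}(\ell)$ with $\ell=\kappa^{-\rho}$, the local Ginzburg--Landau energy $\mathcal E(\psi,\Ab;V_{x_0}(\ell))$ of the minimizer against the model surface energy isolated in Lemma~\ref{lem:est-E0}. Since $H=b\kappa$ and $B_0$ is only H\"older continuous, the field felt near $x_0$ is essentially the constant $\kappa H|B_0(x_0)|=b|B_0(x_0)|\,\kappa^2$; the natural model parameter is therefore $h_{\rm ex}=b|B_0(x_0)|\,\kappa^2$, for which $h_{\rm ex}/\kappa^2=b|B_0(x_0)|\in(1,\Theta_0^{-1})$, precisely the range where $\Es$ is defined and strictly negative. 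First I would freeze the potential: combining Proposition~\ref{lem:Att} (which gives $\|\Ab-\Fb\|_{C^{0,\gamma}(\overline\Omega)}=O(\kappa^{-1})$) with the interior gauge of Lemma~\ref{lem:Gauge-Att} and the boundary gauge of Lemma~\ref{eq-hc2-gaugeT}, one reduces $\mathcal E(\psi,\Ab;V_{x_0}(\ell))$, up to a gauge phase and a potential error of order $\kappa^2\ell^{1+\alpha}$, to the model energy $\mathcal G_0(\cdot,\Ab_0;V_{x_0}(\ell))$ estimated in \eqref{eq:lbE0}--\eqref{eq:upE0}.

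For the upper bound I would use global minimality. Rescaling Pan's surface minimizer $u_R$ from Theorem~\ref{thm-hc2-Pa02} to the box, undoing the gauge and gluing it to $\psi$ across the inner (normal) face of $V_{x_0}(\ell)$ produces a global competitor; minimality of $(\psi,\Ab)$ together with the upper bound \eqref{eq:upE0} then bounds $\mathcal E(\psi,\Ab;V_{x_0}(\ell))$ from above by the surface-energy leading term $\dfrac{\kappa^2\ell}{\sqrt{h_{\rm ex}}}\,\Es\!\big(h_{\rm ex}/\kappa^2\big)$, up to lower-order terms. The decay estimate of Theorem~\ref{thm-hc2-Pa02}(1)(c) makes the truncation of $u_R$ cheap, while the exponential decay of $\psi$ in the normal direction (Theorem~\ref{thm:exp-dec}, valid at depth $t=\ell\gg(\kappa H)^{-1/2}$ since $\rho<1$) controls the gluing across the inner face; the universal bound $|\psi|\leq1$ of \eqref{eq:psi<1} absorbs the remaining error terms.

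For the lower bound I would cut $\psi$ off to a function supported in $V_{x_0}(\ell)$, apply the same gauge reduction, and invoke \eqref{eq:lbE0}. The delicate point, and what I expect to be the main obstacle, is that the two lateral faces $\{s=\pm\ell\}$ sit on $\partial\Omega$ where surface superconductivity persists, so the cut-off there creates genuine kinetic energy that is a priori of the same order as the surface energy itself. I would control this by averaging the construction over the lateral position (equivalently, over a short family of translated or dilated boxes), so that for a well-chosen box the lateral loss is a negligible fraction of $\mathcal E(\psi,\Ab;V_{x_0}(\ell))$; the superadditivity behind Theorem~\ref{thm-hc2-Pa02}(2) is the structural reason such an averaging succeeds. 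Matching this with the upper bound, and choosing $\rho\in(0,1)$ close enough to $1$ to balance the errors of Lemma~\ref{lem:est-E0} (e.g. the $\kappa^3\ell^4$ term forces $\rho>3/4$) against the H\"older freezing error of size $\kappa^2\ell^{1+\alpha}$, yields the energy asymptotics \eqref{eq:1.14}.

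Finally, to reach the $L^4$ statement \eqref{eq:1.13} I would test the first equation of \eqref{eq:GL} with $\overline\psi$ over $V_{x_0}(\ell)$. The Neumann condition annihilates the boundary term on $\partial\Omega$ and produces, for every $\kappa$, the identity $\mathcal E(\psi,\Ab;V_{x_0}(\ell))=-\tfrac{\kappa^2}{2}\int_{V_{x_0}(\ell)}|\psi|^4\,dx+\mathrm{BT}$, where $\mathrm{BT}$ is the flux of $\psi$ through the inner face (small by normal exponential decay) and the lateral faces (handled by the same averaging as above). Inserting the energy asymptotics converts \eqref{eq:1.14} into \eqref{eq:1.13}, the strict positivity of the limit following from $\Es<0$ on $[1,\Theta_0^{-1})$. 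The hardest step throughout is the lower bound, i.e.\ absorbing the lateral cut-off loss; the remaining work is trial-state upper bounds and careful bookkeeping of the error exponents.
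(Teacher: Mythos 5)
Your proposal is correct in its overall architecture and matches the paper's strategy closely: freeze the potential via Proposition~\ref{lem:Att}, Lemma~\ref{lem:Gauge-Att} and Lemma~\ref{eq-hc2-gaugeT}; compare with the model surface energy of Lemma~\ref{lem:est-E0}; get the energy upper bound by gluing a rescaled Pan minimizer to $\psi$ and invoking global minimality together with the decay of $u_R$ and of $\psi$ (this is exactly Steps 1--4 of the proof of Theorem~\ref{thm:lb-psi}); and pass between the local energy and the $L^4$ norm through an identity derived from the first Ginzburg--Landau equation. The one place where you genuinely diverge is the treatment of the lateral faces, and it is worth comparing. You use sharp cutoffs/boundary flux terms and propose a pigeonhole over translated boxes to kill the lateral loss; the paper instead multiplies the first equation of \eqref{eq:GL} by $f^{2}\overline\psi$ for a \emph{smooth} cutoff $f$ equal to $1$ on $V_{x_0}(\ell)$ and supported in $V_{x_0}((1+\sigma)\ell)$, which yields the exact identity \eqref{eq:decomp*},
\begin{equation*}
\mathcal E_1(f\psi,\Ab)=\kappa^2\int f^2\Big(-1+\tfrac12 f^2\Big)|\psi|^4\,dx+\int|\nabla f|^2|\psi|^2\,dx\,.
\end{equation*}
With $|\nabla f|\leq C(\sigma\ell)^{-1}$ supported on a set of area $O(\sigma\ell^2)$ and the universal bound $|\psi|\leq1$, the entire cutoff loss is $O(\sigma^{-1})$, which with $\sigma=\kappa^{(\rho-1)/2}$ is $o(\kappa\ell)$, i.e.\ negligible against the surface energy. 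So the ``main obstacle'' you identify --- lateral kinetic energy a priori comparable to the surface energy --- evaporates once one tests with $f^2\overline\psi$ rather than with $\overline\psi$ on a sharp box; no averaging is needed, and the boundary flux terms (on the inner face as well as the lateral ones) never appear. This is also exactly how the paper avoids the elliptic $L^\infty$ gradient estimates that your flux terms would implicitly require, and which are not available for merely H\"older continuous $B_0$. Your averaging scheme could probably be pushed through, but to run the pigeonhole you would first need a localized bound on $\int|(\nabla-i\kappa H\Ab)\psi|^2$ over a boundary box of the right order $O(\kappa\ell)$, and the natural way to obtain that bound is again the identity \eqref{eq:decomp*}; at that point the averaging is redundant. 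Apart from this, your error bookkeeping (the constraint $\rho>3/4$ coming from $\kappa^3\ell^4$, sharpened in the paper to $\rho>\tfrac{3}{3+\alpha}$ to accommodate the H\"older freezing error) and your use of Theorem~\ref{thm-hc2-Pa02} and of the normal exponential decay are in line with the paper's proof.
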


The proof of Theorem~\ref{thm:surface-sc*} will follow from the upper bound in Theorems~\ref{thm:ub-l4} and \ref{thm:lb-psi} below. 

\begin{rem}\label{rem:unif-conv}
Let $\epsilon\in(1,\Theta_0^{-1}-1)\,$. The convergence in  \eqref{eq:1.13} and \eqref{eq:1.14}  is uniform with respect to  $x_0\in\{1+\epsilon\leq b|B_0|<\Theta_0^{-1}\}\cap\partial\Omega$. This is precisely stated in Theorems~\ref{thm:ub-l4} and \ref{thm:lb-psi}. 
\end{rem}
~

\subsection{Sharp upper bound on the $L^4$-norm}~

In this subsection, we will prove:

\begin{thm}\label{thm:ub-l4} 
Suppose that  $B_0\in C^{0,\alpha}(\overline\Omega)$ for some $\alpha\in(0,1)$, $\rho\in(\frac{3}{3+\alpha},1)$ and
\begin{equation*}
 b\geq\beta_0^{-1}\,,\, \mbox{ with } \beta_0:=\sup_{x\in\overline\Omega}|B_0(x)|>0\,.
\end{equation*} 
There exist $\kappa_0>0$,  a function $ {\rm r}:[\kappa_0,+\infty)\to\R_+$ 
such that $\lim_{\kappa\to +\infty}{\rm r}(\kappa)=0$ and, 
for all $\kappa\geq\kappa_0$\,, for all critical point  $(\psi,\Ab)_{\kappa,H}$ of the functional in \eqref{eq-3D-GLf} with $H=b\kappa$\,, and all  $x_0\in \partial\Omega$ satisfying  $$1\leq b\, |B_0(x_0)|  < \Theta_0^{-1}\,,$$
the inequality 
$$
\frac1{2\ell}\int_{V_{x_0}(\ell)}|\psi(x)|^4\,dx\leq -2\kappa^{-1}\sqrt{\frac{1}{b|B_0(x_0)|}}\,\Es\Big(b\, |B_0(x_0)|\Big)
+\kappa^{-1} \,{\rm r}(\kappa)\,,
$$
holds with 
$$\ell = \kappa^{-\rho}
%\,,\quad  \frac{3}{3+\alpha}<\rho<1\quad
\text{ and }\quad 
V_{x_0}(\ell) \text{ is defined in \eqref{eq-hc2-U12}}.$$
\end{thm}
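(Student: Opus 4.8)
The plan is to convert the desired $L^4$ bound into a \emph{lower} bound for a localized Ginzburg--Landau energy, and then to feed this into the surface-energy inequality \eqref{eq:lbE0}.

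\emph{Reduction to a local energy lower bound.} Write $\ell=\kappa^{-\rho}$ and fix $\ell'=(1+\delta)\ell$ with $\delta=\delta(\kappa)\to0$ to be chosen at the very end. I would take a smooth cutoff $\chi$ with $\chi\equiv1$ on $V_{x_0}(\ell)$, $\supp\chi\subset V_{x_0}(\ell')$, $0\le\chi\le1$ and $|\nabla\chi|\le C/(\delta\ell)$, chosen to vanish on the part of $\partial V_{x_0}(\ell')$ interior to $\Omega$ while remaining free on $\partial\Omega$. By the universal bound \eqref{eq:psi<1} one has $|\chi\psi|\le1$, and testing the first equation of \eqref{eq:GL} against $\chi^2\overline\psi$ and integrating by parts over $V_{x_0}(\ell')$---the boundary term on $\partial\Omega$ dropping by the Neumann condition and the interior one dropping since $\chi=0$ there---yields
\begin{equation*}
\mathcal E\big(\chi\psi,\Ab;V_{x_0}(\ell')\big)=-\frac{\kappa^2}{2}\int_{V_{x_0}(\ell')}\chi^2(2-\chi^2)|\psi|^4\,dx+\int_{V_{x_0}(\ell')}|\nabla\chi|^2|\psi|^2\,dx\,.
\end{equation*}
Since $0\le\chi\le1$ forces $\chi^2(2-\chi^2)\ge0$, with value $1$ on $V_{x_0}(\ell)$, this gives
\begin{equation*}
\frac{\kappa^2}{2}\int_{V_{x_0}(\ell)}|\psi|^4\,dx\le-\mathcal E\big(\chi\psi,\Ab;V_{x_0}(\ell')\big)+\int_{V_{x_0}(\ell')}|\nabla\chi|^2|\psi|^2\,dx\,,
\end{equation*}
so that everything reduces to a lower bound for $\mathcal E(\chi\psi,\Ab;V_{x_0}(\ell'))$.

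\emph{Reduction to the model energy.} Next I would replace the genuine potential by the constant-field model. By Proposition~\ref{lem:Att}, $\|\Ab-\Fb\|_{C^{0,\gamma}(\overline\Omega)}=O(\kappa^{-1})$, and Lemma~\ref{lem:Gauge-Att} at $a=x_0$ produces a gauge $\varphi_{x_0}$ with $|\Fb-B_0(x_0)\Ab_0(\cdot-x_0)-\nabla\varphi_{x_0}|\le C\,(\ell')^{1+\alpha}$ on $V_{x_0}(\ell')$. Substituting both approximations into the kinetic term, splitting by Cauchy--Schwarz with an auxiliary parameter exactly as in the proof of Proposition~\ref{lem:qf}, and absorbing $\nabla\varphi_{x_0}$ together with the constant shift $\Ab_0(\cdot-x_0)-\Ab_0$ into a single gauge factor $e^{i\kappa H\Phi}$ (conjugating $\chi\psi$ when $B_0(x_0)<0$, which leaves $|\psi|^2$ and $|\psi|^4$ unchanged), I obtain
\begin{equation*}
\mathcal E\big(\chi\psi,\Ab;V_{x_0}(\ell')\big)\ge(1-o(1))\,\mathcal G_0\big(u,\Ab_0;V_{x_0}(\ell')\big)-o(\kappa\ell)\,,
\end{equation*}
where $u=e^{i\kappa H\Phi}\chi\psi$ (or its conjugate), $\mathcal G_0$ is the functional \eqref{eq:GL-loc} with $h_{\rm ex}=\kappa H|B_0(x_0)|=b\kappa^2|B_0(x_0)|$, and $\int_{V_{x_0}(\ell')}|u|^2\le C\ell^2$ by \eqref{eq:psi<1}. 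The hypothesis $1\le b|B_0(x_0)|<\Theta_0^{-1}$ guarantees $\kappa^2\le h_{\rm ex}<\Theta_0^{-1}\kappa^2$, while $u$ vanishes on the interior boundary and $\|u\|_\infty\le1$; hence $u$ and $h_{\rm ex}$ meet the hypotheses of Lemma~\ref{lem:est-E0}, and \eqref{eq:lbE0} on $V_{x_0}(\ell')$ gives
\begin{equation*}
\mathcal G_0\big(u,\Ab_0;V_{x_0}(\ell')\big)\ge\frac{2\kappa\ell'}{\sqrt{b|B_0(x_0)|}}\,\Es\big(b|B_0(x_0)|\big)-C\kappa\ell'\big(\ell'+\kappa^3(\ell')^4+\kappa(\ell')^2\big)\,.
\end{equation*}

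\emph{Error bookkeeping and conclusion.} Combining the three displays and dividing by $\kappa^2\ell$, the leading term produces precisely $-2\kappa^{-1}\sqrt{1/(b|B_0(x_0)|)}\,\Es(b|B_0(x_0)|)$; the factor $\ell'/\ell=1+\delta$ and the boundedness of $\Es$ and $\sqrt{1/(b|B_0|)}$ on $[1,\Theta_0^{-1})$ (the latter also delivering the uniformity in $x_0$ of Remark~\ref{rem:unif-conv}) contribute only an $o(\kappa^{-1})$ correction. It then remains to check that every remainder is $o(\kappa\ell)=o(\kappa^{1-\rho})$ before dividing. With $\ell\sim\kappa^{-\rho}$ and $\int|u|^2\le C\ell^2$, the dominant error is the Hölder gauge remainder of order $\kappa^4\ell^{4+\alpha}=\kappa^{4-(4+\alpha)\rho}$ (after the optimal Cauchy--Schwarz splitting, in the spirit of Proposition~\ref{lem:qf}), which is $o(\kappa^{1-\rho})$ exactly when $\rho>\tfrac{3}{3+\alpha}$; this dominates the condition $\rho>\tfrac34$ needed to absorb the curvature term $\kappa^4(\ell')^5$ of \eqref{eq:lbE0}. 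Finally $\int|\nabla\chi|^2|\psi|^2\le C(\delta\ell)^{-2}\,|\supp\nabla\chi|\le C\delta^{-1}$, so a choice such as $\delta=\kappa^{-(1-\rho)/2}$ makes this term $o(\kappa\ell)$ while keeping $\delta\to0$. Collecting everything yields the claimed inequality with some $r(\kappa)\to0$. The main obstacle is precisely this bookkeeping in the second step: one must carry the two successive gauge replacements through the kinetic term while keeping the constant in front of $\Es$ exact and all remainders strictly below the scale $\kappa^{1-\rho}$ of the main term, and it is this balance---identical to the one optimized in Proposition~\ref{lem:qf}---that fixes the admissible range $\rho\in(\tfrac{3}{3+\alpha},1)$.
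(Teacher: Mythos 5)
Your proposal is correct and follows essentially the same route as the paper's proof: the same cutoff localized to $V_{x_0}((1+\sigma)\ell)$, the same integration-by-parts identity (the paper's \eqref{eq:decomp*}) turning the $L^4$ bound into a lower bound for the local energy, the same two-step gauge reduction via Proposition~\ref{lem:Att} and Lemma~\ref{lem:Gauge-Att}, the same application of \eqref{eq:lbE0}, and the same parameter choices ($\delta\sim\kappa^{-(1-\rho)/2}$, error budget $o(\kappa^{1-\rho})$ yielding $\rho>\tfrac{3}{3+\alpha}$).
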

\begin{proof}~\\
The proof is reminiscent of the method  used by the second author in \cite[Sec.~4]{K-CMB} (see also \cite{KN-3D}). We  assume that $B_0(x_0)>0$. The case where $B_0(x_0)<0$ can be treated in the same manner by applying the transformation $u\mapsto \overline{u}$.

Let $\sigma\in(0,1)$ and $\ell=\kappa^{-\rho}$ as  in the statement of Theorem~\ref{thm:ub-l4}\,.
Let  $f$ be a smooth function  satisfying, 
\begin{equation}\label{eq:f*}
f=1\quad{\rm in~} V_{x_0}(\ell),\quad 0\leq f\leq 1{~\rm and~}|\nabla f|\leq \frac{C}{\sigma\ell}\quad {\rm in~}V_{x_0}\big((1+\sigma)\ell\big)\,.
\end{equation}
The function $f$ depends  on the parameters $x_0,\ell,\sigma$ but the constant $C$ is independent of these parameters.
We will estimate the following local energy 
\begin{equation}\label{eq:gse-loc}  
\mathcal E_1(f\psi,\Ab):=\mathcal E_1\big(f\psi,\Ab;  V_{x_0}((1+\sigma)\ell)\big)\,,
\end{equation}
where, for an open set $\mathcal V\subset \Omega$, 
\begin{equation}\label{eq:GLV}
\begin{aligned}
&\mathcal E_1(u,\Ab; \mathcal V):=\int_{\mathcal V}\left(|(\nabla-i\kappa H\Ab)u |^2-\kappa^2|u|^2+\frac{\kappa^2}2|u|^4\right)\,dx\,,\\
&\mathcal E_2(u,\Ab; \mathcal V):=\int_{\mathcal V}\left(|(\nabla-i\kappa H\Ab)u |^2-\kappa^2|u|^2+\frac{\kappa^2}2|u|^4\right)\,dx\\
&\hskip3.5cm+(\kappa H)^2\int_\Omega|\curl\Ab-B_0|^2\,dx\,.
\end{aligned}
\end{equation}
%denotes the  Ginzburg-Landau energy of $u$ in $\mathcal V$.

Since $(\psi,\Ab)$ is a solution of \eqref{eq:GL}, an integration by parts  yields (cf. \cite[Eq.~(6.2)]{FK-am}),
\begin{equation}\label{eq:decomp*}
\mathcal E_1(f\psi,\Ab)=\kappa^2\int_{V_{x_0}((1+\sigma)\ell)}f^2\left(-1+\frac12f^2\right)|\psi|^4\,dx+\int_{V_{x_0}((1+\sigma)\ell)}|\nabla f|^2|\psi|^2\,dx\,.
\end{equation}
Since $f=1$ in $V_{x_0}(\ell)$ and  $-1+\frac12f^2\leq -\frac12$ in $V_{x_0}((1+\sigma)\ell)$, we may write
$$\int_{V_{x_0}((1+\sigma)\ell)}f^2\left(-1+\frac12f^2\right)|\psi|^4\,dx\leq -\frac12\int_{V_{x_0}(\ell)}|\psi|^4\,dx\,.$$
We estimate the integral in \eqref{eq:decomp*} involving $|\nabla f|$  using  \eqref{eq:f*} and  $ \big|{\rm supp\,}f\big|\leq C\sigma\ell^2$, where  $\big|{\rm supp\,}f\big|$ denotes the area of the support of $f\,$. 
In this way, we infer from \eqref{eq:decomp*},
\begin{equation}\label{eq:decomp**}
\mathcal E_1(f\psi,\Ab)\leq -\frac{\kappa^2}2\int_{V_{x_0}(\ell)}|\psi|^4\,dx+C\sigma^{-1}\,.
\end{equation}
Now we write a lower bound  for this energy. We may find a real-valued function\break$w\in C^{2,\alpha}(\overline{V_{x_0}((1+\sigma)\ell})$ 
such that
\begin{multline*}
\mathcal E_1(f\psi,\Ab)\geq 
\int_{V_{x_0}((1+\sigma)\ell)}\Big(
(1-C\ell^\delta)|(\nabla-i\kappa H B_0(x_0)\Ab_0)(e^{-i\kappa Hw}f\psi) |^2-\kappa^2|f\psi|^2+\frac{\kappa^2}2|f\psi|^4\Big)\,dx\\
-C\kappa^2\Big(\ell^{2\gamma-\delta}+\kappa^2\ell^{2+2 \alpha -\delta}\Big)\int_{V_{x_0}((1+\sigma)\ell)}|f\psi|^2\,,\end{multline*}
where $\gamma\in(0,1)$ is a constant whose choice will be specified later and $\delta >0\,$. \\The details of   these computations are given in \eqref{eq:ke-1} and \eqref{eq:ke-2}.\\
 From now on we choose $\delta=\alpha\,$, use the lower bound in Lemma~\ref{lem:est-E0} and the assumption that $H=b\kappa$ to write
\begin{multline*}
\mathcal E_1(f\psi,\Ab)\geq 
2(1-C\ell^\alpha)\kappa(1+\sigma)\ell\,  \sqrt{\frac1{b|B_0(x_0)|}}\,\Es\big(b|B_0(x_0)|\big)\\
-C\kappa\ell(\ell+\kappa^3\ell^4+\kappa\ell^2)-C\kappa^2\Big(\ell^\alpha+\ell^{2\gamma-\alpha}+\kappa^2\ell^{2+\alpha}\Big)\int_{V_{x_0}((1+\sigma)\ell)}|f\psi|^2\,.
\end{multline*}
Using the bound $\|f\psi\|_\infty\leq 1\,$, we get further
\begin{multline}\label{eq:lb-loc-en}
\mathcal E_1(f\psi,\Ab)\geq 
2(1-C\ell^\alpha)\kappa(1+\sigma)\ell \,  \sqrt{\frac1{b|B_0(x_0)|}}\, \Es\big(b|B_0(x_0)|\big)\\
-C\kappa\ell\Big(\ell+\kappa\ell^{1+\alpha}+\kappa\ell^{1+2\gamma-\alpha}+\kappa^3\ell^{3+\alpha}\Big)\,.
\end{multline}
To optimize the remainder, we choose $\gamma=\alpha\,$. Our assumption
$$\ell=\kappa^{-\rho}\quad{\rm with}\quad (1+\alpha)^{-1}<3(3+\alpha)^{-1}<\rho<1$$ yields that the function
$$\Sigma(\kappa,\ell) :=\ell^\alpha+\ell+\kappa\ell^{1+\alpha}+\kappa^3\ell^{3+\alpha}$$
tends, with $\ell=\kappa^{-\rho}$, to $0$ as $\kappa \rightarrow +\infty$\,.\\
Now, coming back to \eqref{eq:decomp**}, we find
$$
2\kappa(1+\sigma)\ell \sqrt{\frac1{b|B_0(x_0)|}}\Es\big(b|B_0(x_0)|\big)-C\,\kappa\,\ell\,\Sigma(\kappa,\ell) \leq -\frac{\kappa^2}2
\int_{V_{x_0}(\ell)}|\psi|^4\,dx+C\sigma^{-1}\,.
$$
We rearrange the terms in this inequality,  divide by $\kappa^2 \ell$,  and  choose $\sigma=\kappa^{\frac12(\rho-1)}$. In this way, we get the upper bound in Theorem~\ref{thm:ub-l4} with, for some constant $C>0$, 
$$
 {\rm r}(\kappa)=C\, \Big(\Sigma(\kappa, \kappa^{-\rho})+ \kappa^{\frac 12(\rho-1)} \Big)\,.$$
\end{proof}

\subsection{Sharp Lower bound on the $L^4$-norm}~ 

In this subsection, we will  prove the asymptotic optimality of the upper bound  established in Theorem~\ref{thm:ub-l4}   by giving a lower bound with the same asymptotics.

We remind the reader of the definition of the domain $V_{x_0}(\ell)$ in \eqref{eq-hc2-U12} and
the local energy $\mathcal E_1\big(\psi,\Ab;\mathcal V\big)$ introduced in \eqref{eq:GLV}.

\begin{thm}\label{thm:lb-psi}
Let $1<\epsilon<\Theta_0^{-1}-1$, $\frac{3}{3+\alpha}<\rho<1$ and $1-\rho<\delta<1$ be constants. Under the assumptions of Theorem \ref{thm:ub-l4}, there exist $\kappa_0>0$,  a function $ {\rm \hat r}:[\kappa_0,+\infty)\to\R_+$ 
such that $\lim_{\kappa\to +\infty}{\rm \hat r}(\kappa)=0$ and, 
for all $\kappa\geq\kappa_0$\,, for all minimizer  $(\psi,\Ab)_{\kappa,H}$ of the functional in \eqref{eq-3D-GLf} with $H=b\kappa$\,, and all  $x_0\in \partial\Omega$ satisfying  $$1+\epsilon\leq b|B_0(x_0)|  < \Theta_0^{-1}\,,$$
the two inequalities
$$
\frac1{2\ell}\int_{V_{x_0}(\ell)}|\psi(x)|^4\,dx\geq -2\kappa^{-1}\sqrt{\frac{1}{b|B_0(x_0)|}}\,\Es\Big(b|B_0(x_0)|\Big)
-\kappa^{-1} \,{\rm \hat r} (\kappa)\,,
$$
$$
\left|\frac1{2\ell}\mathcal E_1\big(\psi,\Ab;V_{x_0}((1+\sigma)\ell)\big)-\kappa \sqrt{\frac{1}{b|B_0(x_0)|}}\,\Es\Big(b|B_0(x_0)|\Big)\right|
\leq \kappa \,{\rm \hat r} (\kappa) \,,
$$
hold,  with $\ell=\kappa^{-\rho}$ %and $\rho$ defined as in  Theorem \ref{thm:ub-l4}, 
 and $\sigma=\kappa^{-\delta}$.
\end{thm}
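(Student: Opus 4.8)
The plan is to prove both inequalities by first pinning the local energy $\mathcal E_1\big(\psi,\Ab;V_{x_0}((1+\sigma)\ell)\big)$ between two matching bounds, and then transferring this information to the $L^4$ norm through the integration by parts identity \eqref{eq:decomp*}. The energy \emph{lower} bound is precisely the one produced in the course of the proof of Theorem~\ref{thm:ub-l4}: applying the gauge reduction \eqref{eq:ke-1}--\eqref{eq:ke-2} and the lower bound \eqref{eq:lbE0} of Lemma~\ref{lem:est-E0} to $f\psi$ gives $\mathcal E_1(f\psi,\Ab)\ge 2\kappa(1+\sigma)\ell\sqrt{1/(b|B_0(x_0)|)}\,\Es(b|B_0(x_0)|)$ up to a negligible error. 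The genuinely new ingredient, and the reason the statement is restricted to \emph{minimizers}, is the matching energy \emph{upper} bound, which will come from testing the Ginzburg--Landau functional against a well chosen competitor.

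To produce the competitor I would take the quasi-minimizer $u_R$ of the reduced functional supplied by Theorem~\ref{thm-hc2-Pa02} with $R=\sqrt{h_{\rm ex}}\,\ell$ and parameter $b|B_0(x_0)|\in[1+\epsilon,\Theta_0^{-1})$, where $h_{\rm ex}=\kappa H|B_0(x_0)|$. Undoing the scaling of Lemma~\ref{lem:est-E0} and inserting the gauge of Lemma~\ref{eq-hc2-gaugeT} together with the approximations of Proposition~\ref{lem:Att} and Lemma~\ref{lem:Gauge-Att} (so that $\kappa H\Ab$ is replaced by $\kappa H|B_0(x_0)|\Ab_0$) turns $u_R$ into a function concentrated in $V_{x_0}(\ell)$ whose local energy is controlled by the upper bound \eqref{eq:upE0}, hence is at most $2\kappa\ell\sqrt{1/(b|B_0(x_0)|)}\,\Es(b|B_0(x_0)|)$ up to a controlled error. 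I would then glue this profile to $\psi$ to build a global test configuration $v$ that coincides with $\psi$ outside $V_{x_0}((1+\sigma)\ell)$, keep the same vector field $\Ab$, and invoke $\mathcal E(\psi,\Ab)\le\mathcal E(v,\Ab)$. The magnetic term and all contributions outside $V_{x_0}((1+\sigma)\ell)$ cancel, which yields the desired upper bound for $\mathcal E_1\big(\psi,\Ab;V_{x_0}((1+\sigma)\ell)\big)$.

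The hard part will be the gluing across the interior boundary of $V_{x_0}((1+\sigma)\ell)$. On the top face, at normal distance of order $\ell$ from $\partial\Omega$, the transition is inexpensive: since $\sqrt{\kappa H}\,\ell=\sqrt b\,\kappa^{1-\rho}\to\infty$ and $b|B_0(x_0)|>1$, the interior Agmon estimate of Theorem~\ref{thm:exp-dec} forces $\psi$ to be exponentially small there, while the decay of $u_R$ recorded in Theorem~\ref{thm-hc2-Pa02}(c) does the same for the model profile. On the lateral faces the situation is delicate, because surface superconductivity is present and $\psi$ is of order one there, whereas the rescaled profile vanishes by the Dirichlet condition built into $\mathcal V(U_R)$; the interpolation between the two over the thin annulus $V_{x_0}((1+\sigma)\ell)\setminus V_{x_0}(\ell)$ has to be shown to cost only $o(\kappa^{1-\rho})$. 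Controlling this lateral contribution, using the a priori bound $|\psi|\le1$ and the concentration of the surface profile on the scale $1/\sqrt{\kappa H}$, is the technical heart of the argument and is what selects the admissible range of $\sigma=\kappa^{-\delta}$.

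Once the energy is trapped between the two bounds, I would return to \eqref{eq:decomp*} with the cutoff $f$ of \eqref{eq:f*}. Since $f\equiv1$ on $V_{x_0}(\ell)$ and $f^2(\tfrac12 f^2-1)=-\tfrac12$ there, while on the annulus $f^2(\tfrac12 f^2-1)\ge-\tfrac12$ and $\int|\nabla f|^2|\psi|^2\ge0$, the identity gives
\[
\tfrac12\kappa^2\int_{V_{x_0}(\ell)}|\psi|^4\,dx\ge -\mathcal E_1(f\psi,\Ab)-\tfrac12\kappa^2\big|V_{x_0}((1+\sigma)\ell)\setminus V_{x_0}(\ell)\big|.
\]
Inserting the energy upper bound and dividing by $\kappa^2\ell$ produces the claimed lower bound for $\tfrac1{2\ell}\int_{V_{x_0}(\ell)}|\psi|^4$; here the annulus area $O(\sigma\ell^2)$ contributes $O(\sigma\ell)=O(\kappa^{-\delta-\rho})$, which is $o(\kappa^{-1})$ precisely because $\delta>1-\rho$, while the $\sigma$-dependent factor $(1+\sigma)$ is absorbed into $\hat r(\kappa)$. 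The two-sided energy estimate is then read off directly from the matched bounds. Finally, uniformity in $x_0$ over $\{1+\epsilon\le b|B_0|<\Theta_0^{-1}\}\cap\partial\Omega$ follows because every constant depends on $x_0$ only through $b|B_0(x_0)|$, on which $\Es$ and the constants $R_0,M$ of Theorem~\ref{thm-hc2-Pa02} depend continuously and, thanks to the gap $\epsilon$ away from the degenerate endpoint $1$, uniformly.
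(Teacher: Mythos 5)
Your proposal follows essentially the same route as the paper's proof: the energy lower bound recycled from the proof of Theorem~\ref{thm:ub-l4} (i.e.\ \eqref{eq:lb-loc-en}), a matching upper bound obtained by testing minimality against the rescaled profile $u_R$ glued to $\psi$ across a thin annulus of width $\sigma\ell$, and the transfer to the $L^4$ norm via the identity \eqref{eq:decomp*}, with the constraint $\delta>1-\rho$ entering exactly where you say it does. The one point you gloss over is that the exterior contributions do not cancel exactly: the Cauchy--Schwarz splitting of the kinetic cross terms in the gluing annulus produces a factor $(1+\kappa^{-\zeta})$ in front of $\mathcal E_2(\psi,\Ab)$, which the paper absorbs by first using $\mathcal E(\psi,\Ab)\le 0$ to write $(1+\kappa^{-\zeta})\,\mathcal E(\psi,\Ab)\le\mathcal E(u,\Ab)$ and only then cancelling the exterior energies.
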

\begin{rem}\label{rem:lb-psi*}
Let $c_2>c_1>0$ be fixed constants. The conclusion in Theorem~\ref{thm:lb-psi} remains true if $\ell$ satisfies
$$c_1\kappa^{-\rho}\leq \ell\leq c_2\kappa^{-\rho}\,.$$
\end{rem}

\begin{proof}[Proof of Theorem~\ref{thm:lb-psi}]
In the sequel, $\sigma\in(0,1)$ will be selected as a negative power of $\kappa\,$, $\sigma=\kappa^{-\delta}$ for a suitable constant 
$\delta\in(0,1)$. As  the proof of Theorem \ref{thm:ub-l4}, we can assume that $B_0(x_0)>0\,$. 
The proof of the lower bound in Theorem~\ref{thm:lb-psi} will be done in four steps.\\

{\bf Step~1: Construction of a trial function.}
 
The construction of the trial function here is reminiscent of that by Sandier-Serfaty in the study of bulk superconductivity (cf. \cite{SS02}).   Define the function
\begin{equation}\label{eq:u}
u(x)= \mathbf 1_{V_{x_0}((1+\sigma)\ell)}(x) \chi\left(\frac{t(x)}{\ell}\right)\, \exp\Big(i\kappa H w(x)\Big)\,v_R\circ \Phi_{x_0}^{-1}(x)
+\eta_\ell(x)\psi(x)\quad (x\in\Omega)\,.
\end{equation}
Here $V_{x_0}(\cdot)$ is introduced in \eqref{eq-hc2-U12}, $t(x)={\rm dist}(x,\partial\Omega)$, $\Phi_{x_0}$ is the coordinate transformation defined in \eqref{eq:19}, 
\begin{equation}\label{eq:vR}
v_R(s,t)=\exp\Big(i\kappa H g_{x_0}(s,t) \Big) u_R\big(s\sqrt{B_0(x_0)\kappa H},t\sqrt{B_0(x_0)\kappa H}\big)\,,
\end{equation}
\begin{equation}\label{eq:R}
R=(1+\sigma)\ell \sqrt{{B_0(x_0)}\kappa H}\,,
\end{equation}
and  (cf. \eqref{eq-hc2-redGL})
$$
u_R(\cdot) \mbox{ is a minimizer of the reduced functional } \mathcal E_{bB_0(x_0),R}(\cdot)\,.
$$ 
The function $g_{x_0}(s,t)$ satisfies the following identity in $\big(-2\ell\;,2\ell\,\big)\times(0,\ell)$ (cf. Lemma~\ref{eq-hc2-gaugeT}),

$$\tilde\Ab_0(s,t)-\nabla g_{x_0}(s,t)=\Big(-t+\frac{t^2}2 k(s),0\Big)\,.$$
The function $\chi\in C^\infty([0,\infty))$ satisfies
$$\chi=1~{\rm in~}[0,1/2]\,,\quad \chi=0~{\rm in~}[1,\infty)\,,\mbox{ and }\, 0\leq\chi\leq 1\,.$$
The function $\eta_\ell$ is a smooth function satisfying
$$\eta_\ell(x)=0~{\rm in~}V_{x_0}((1+\sigma)\ell)\,,\quad \eta_\ell(x)=1~{\rm in~}\Omega\setminus V_{x_0}((1+2\sigma)\ell)\,,\quad 0\leq \eta_\ell(x)\leq 1~{\rm in~}\Omega\,,$$
and
$$|\nabla\eta_\ell(x)|\leq C\sigma^{-1}\ell^{-1}\quad{\rm in~}\Omega\,,$$
for some constant $C>0$\,.

Finally, the function $w$ is the sum of two real-valued $C^{2,\alpha}$-functions $w_1$ and $w_2$  in $V_{x_0}((1+\sigma)\ell)$  
and satisfying the following estimates
\begin{equation}\label{eq:w}
|\Ab(x)-\Fb(x)-\nabla w_1(x)|\leq \frac{C}{\kappa}\ell^\alpha\quad{\rm and}\quad
|\Fb(x)-B_0(x_0)\Ab_0(x)-\nabla w_2(x)|\leq C\ell^{1+\alpha}\quad{\rm in~}V_{x_0}((1+\sigma)\ell)\,.
\end{equation}
By  Proposition~\ref{lem:Att}, we simply define $w_1(x)=(x-x_0)\cdot\big(\Ab(x_0)-\Fb(x_0)\big)$. 
 %{\clr En dire un peu plus ?}.   
 The fact that the vector field $\Ab_0(x)$ is gauge equivalent to $\Ab_0(x-x_0)$ and Lemma~\ref{lem:Gauge-Att} ensure the existence of $w_2$.

We decompose the energy $\mathcal E(u,\Ab)$ as follows
\begin{equation}\label{eq:E=E1+E2}
\mathcal E(u,\Ab)=\mathcal E_1(u,\Ab)+\mathcal E_2(u,\Ab)\,,
\end{equation}
where
\begin{equation}\label{eq:E1,E2}
%\begin{aligned}
\mathcal E_1(u,\Ab)=\mathcal E_1\Big(u,\Ab;V_{x_0}((1+\sigma)\ell)\Big)\quad{\rm and}\quad
%\hspace{-2cm} {\rm  is~introduced~in~\eqref{eq:GLV}~
% and}& \\
\mathcal E_2(u,\Ab)=\mathcal E_2\Big(u,\Ab;\Omega\setminus V_{x_0}((1+\sigma)\ell)\Big)
%}\Big(|(\nabla-i\kappa H\Ab)u|^2-\kappa^2|u|^2+\frac{\kappa^2}2|u|^4  \Big)\,dx\\
%&\hspace{-0.5cm}\quad+\kappa^2H^2\int_\Omega|\curl\Ab-B_0|^2\,dx\,.\\
%\end{aligned}
\end{equation}
are introduced in \eqref{eq:GLV}.

{\bf Step~2: Estimating $\mathcal E_1(u,\Ab)$.}

Using the Cauchy-Schwarz inequality and the estimates in \eqref{eq:w}, we get
$$\mathcal E_1(u,\Ab)\leq {(1+\ell^\alpha)}\, \mathcal E_1\Big( e^{-i\kappa H w}\,u,B_0(x_0)\Ab_0\Big)+C\Big(\kappa^2\ell^{2+\alpha} +\kappa^4\ell^{4+\alpha}\Big)\,.
$$
For estimating the term $\mathcal E_1\Big( e^{-i\kappa H w}\,u,B_0(x_0)\Ab_0\Big)$, we write
$$\mathcal E_1\Big( e^{-i\kappa H w}\,u,B_0(x_0)\Ab_0\Big)=\mathcal G_0\Big( e^{-i\kappa H w}\,u,h_{\rm ex}\Ab_0;V_{x_0}(\tilde\ell)\Big)\,,$$
where
$$\tilde\ell=(1+\sigma)\ell\,,\quad h_{\rm ex}=\kappa H B_0(x_0)\quad\textrm{ and } \mathcal G_0\textrm{ is introduced in \eqref{eq:GL-loc}}\,.$$
We apply  Lemma~\ref{lem:est-E0} and get
$$\mathcal E_1(u,\Ab)\leq {(1+C\ell^\alpha)}\,  \frac1{b B_0(x_0)}\mathcal E_{bB_0(x_0),R}\big(\widetilde \chi_\ell\,u_R\big) +C\kappa\ell\Big(\kappa^3\ell^{3+\alpha} +\kappa\ell^{1+\alpha}\Big)\,,
$$
where 
$$\widetilde\chi_\ell(\tau)=\chi\left(\frac{\tau}{\ell\sqrt{\kappa H}}\right)\,, \, b=H/\kappa\,, \mbox{ and }R=\sqrt{h_{\rm ex}}\,\tilde\ell\,, $$
in conformity with  \eqref{eq:R}.\\
Note that 
${\rm supp}(1-\widetilde\chi_\ell^2)\subset [\ell\sqrt{\kappa H}/2 \,,\,+\infty)$
and ${\rm supp\,}\widetilde\chi_\ell'\subset[ \ell\sqrt{\kappa H}/2,\ell\sqrt{\kappa H}]$. Using the decay of $u_R$ established in Theorem~\ref{thm-hc2-Pa02}, we get
$$
\mathcal E_{bB_0(x_0),R}\big(\widetilde \chi_\ell\,u_R\big)\leq \mathcal E_{bB_0(x_0),R}\big(u_R\big)+C\frac{|\ln(\ell\sqrt{\kappa H})|^2}{\ell\sqrt{\kappa H}}\,.$$
Since $\mathcal E_{bB_0(x_0),R}(u_R)= d (bB_0(x_0),R)$ and $ R=(1+\sigma)\ell \sqrt{B_0(x_0)\kappa H}$, Theorem~\ref{thm-hc2-Pa02} yields
\begin{multline}\label{eq:conc-step2}
\mathcal E_1(u,\Ab)\leq 2\kappa\ell\sqrt{\frac{1}{b|B_0(x_0)|}}\,\Es\big(bB_0(x_0)\big)\\ +C\kappa\ell\Big(\ell^\alpha+\kappa^3\ell^{3+\alpha} +\kappa\ell^{1+\alpha}+\sigma+(\kappa\ell)^{-1}+\frac{|\ln(\ell\sqrt{\kappa H})|^2}{\ell^2\kappa^2}\Big)\,.
\end{multline}

{\bf Step~3: Estimating $\mathcal E_2(u,\Ab)$.}

Let $V_{x_0}(\tilde \ell )^{\,\complement}:= \Omega \setminus V_{x_0}(\tilde \ell ) $ and  $u=\eta_\ell\,\psi\,$. By the Cauchy-Schwarz inequality,  we get, for any $\zeta \in (0,1)\,,$
\begin{align*}
\int_{V_{x_0}(\tilde \ell )^{\,\complement}}&|(\nabla-i\kappa H\Ab)\eta_\ell\psi|^2\,dx\\
&\leq
(1+ \kappa^{-\zeta})\int_{V_{x_0}(\tilde \ell )^{\,\complement}}|\eta_\ell(\nabla-i\kappa H\Ab)\psi|^2\,dx+{(1+ \kappa^\zeta)}\int_{V_{x_0}(\tilde \ell )^{\,\complement}}
|\nabla\eta_\ell|^2|\psi|^2\,dx\\
&\leq 
(1+ \kappa^{-\zeta})\int_{V_{x_0}(\tilde \ell )^{\,\complement}}|(\nabla-i\kappa H\Ab)\psi|^2\,dx\\
&\qquad
+{(1+\kappa^\zeta)}\int_{\{t(x)\leq \sigma\ell\}\cap V_{x_0}(\tilde \ell )^\complement}
|\nabla\eta_\ell|^2|\psi|^2\,dx
+ {(1+\kappa^\zeta)}\int_{\{t(x)> \sigma\ell\}}|\nabla\eta_\ell|^2|\psi|^2\,dx\\
&\leq(1+{\kappa^{-\zeta}})\int_{V_{x_0}(\tilde \ell )^{\,\complement}}|(\nabla-i\kappa H\Ab)\psi|^2\,dx\, +\,  { C (1+\kappa^\zeta)}\,.
\end{align*}
Here we used the properties of the function $\eta_\ell$, namely that $\eta_\ell\leq 1$, $|\nabla\eta_\ell|=\mathcal O(\sigma^{-1}\ell^{-1})$ and $|\{t(x)\leq \sigma\ell\}\cap V_{x_0}(\tilde \ell )^\complement|=\mathcal O(\sigma^2\ell^2)\,$. \\
For the integral over $\{t(x)>\sigma\ell\}$, we use that $b|B_0(x_0)|\geq 1+\epsilon\,$, which in turn allows us to use Theorem~\ref{thm:exp-dec*} and prove that the integral of $|\psi|^2$ is exponentially small as $ \kappa \rightarrow +\infty\,$.

Now we use that $\Big|V_{x_0}(\tilde \ell )^{\,\complement}\cap {\rm supp}(1-\eta_\ell)\Big|=\mathcal O(\sigma\ell^2)$ 
to write
\begin{align*}
-\kappa^2\int_{V_{x_0}(\tilde \ell )^{\,\complement}}|\eta_\ell\psi|^2\,dx&=
-\kappa^2\int_{V_{x_0}(\tilde \ell )^{\,\complement}}|\psi|^2\,dx+\kappa^2\int_{V_{x_0}(\tilde \ell )^{\,\complement}}(1-\eta_\ell^2)|\psi|^2\,dx\\
&\leq -\kappa^2\int_{V_{x_0}(\tilde \ell )^{\,\complement}}|\psi|^2\,dx+C\kappa^2\sigma\ell^2\,.
\end{align*}
This yields
\begin{multline*}
\mathcal E_2(u,\Ab)\leq {(1+\kappa^{-\zeta})} \int_{\Omega\setminus V_{x_0}(\tilde \ell )}\Big(|(\nabla-i\kappa H\Ab)\psi|^2-\kappa^2|\psi|^2+\frac{\kappa^2}2|\psi|^4\Big)\,dx\\+C\Big(1+\kappa^\zeta+\kappa^2\sigma\ell^2\Big)
+\kappa^2H^2\int_\Omega|\curl\Ab-B_0|^2\,dx\,.
\end{multline*}
Remembering the definition of $\mathcal E_2(\psi,\Ab)$ in \eqref{eq:E1,E2}, we obtain
\begin{equation}\label{eq:E2(u,A)}
\mathcal E_2(u,\Ab)\leq (1+\kappa^{-\zeta}) \, \mathcal E_2 (\psi,\Ab) +C\, \Big(1+\kappa^{\zeta}+\kappa^2\sigma\ell^2\Big)\,.
\end{equation}

{\bf Step~4: Upper bound of the local Ginzburg-Landau energy.}\\

\noindent Since $(\psi,\Ab)$ is a minimizer of the functional $\mathcal E(\cdot,\cdot)$, $\mathcal E(\psi,\Ab)\leq \mathcal E(0,\Ab_0)=0$ and 
$$\mathcal E(\psi,\Ab)\leq \mathcal E(u,\Ab)=\mathcal E_1(u,\Ab)+\mathcal E_2(u,\Ab)\,.$$
Using that $\mathcal E(\psi,\Ab)\leq 0\,$, we get further
$$ (1+ \kappa^{-\zeta}) \mathcal E(\psi,\Ab)\leq \mathcal E(u,\Ab)=\mathcal E_1(u,\Ab)+\mathcal E_2(u,\Ab)\,.$$
 By \eqref{eq:E1,E2}, we may write the simple identity $\mathcal E(\psi,\Ab)=\mathcal E_1(\psi,\Ab)+\mathcal E_2(\psi,\Ab)$. Using \eqref{eq:E2(u,A)}, we get
$$(1+\kappa^{-\zeta})\,\mathcal E_1(\psi,\Ab)\leq \mathcal E_1(u,\Ab)+C\Big(1+\kappa^{\zeta}+\kappa^2\sigma\ell^2\Big) 
\,.$$
Now, we use the estimate in \eqref{eq:conc-step2} to write
\begin{multline}\label{eq:conc-step3}
\mathcal E_1(\psi,\Ab)\leq 2\kappa\ell\sqrt{\frac{1}{b|B_0(x_0)|}}\,\Es\big(bB_0(x_0)\big) \\+C\kappa\ell\Big(\kappa^{-\zeta}+\ell^\alpha+\kappa^3\ell^{3+\alpha} +\kappa\ell^{1+\alpha}+\sigma+(\kappa\ell)^{-1}+\kappa^{-1+\zeta}\ell^{-1}+\kappa\sigma\ell+\frac{|\ln(\ell\sqrt{\kappa H})|^2}{\ell^2\kappa^2}\Big)
 \,.
\end{multline}

{\bf Step~5: Lower bound of the $L^4$-norm.}

We select 
$$\ell=\kappa^{-\rho}\,,\quad \sigma=\kappa^{-\delta}\quad{\rm and}\quad  \zeta=\frac{1-\rho}2\,,$$
with
$$\frac1{1+\alpha}<\frac{3}{3+\alpha}<\rho<1\quad{\rm and}\quad 1-\rho<\delta<1\,.$$
In this way, we get that, the restriction $\bar\Sigma (\kappa,\kappa^{-\rho}, \kappa^{-\delta})$  of 
\begin{equation}\label{eq:S-bar}
\bar\Sigma (\kappa,\ell,\sigma):=\kappa^{-\zeta}+\ell^\alpha+\kappa^3\ell^{3+\alpha} +\kappa\ell^{1+\alpha}+\sigma+(\kappa\ell)^{-1}+\kappa^{-1+\zeta}\ell^{-1}+\kappa\sigma\ell+\frac{|\ln(\ell\sqrt{\kappa H})|^2}{\ell^2\kappa^2}\,,
\end{equation}
 tends to $0$ as $\kappa \rightarrow +\infty\,$.\\
Consequently, we infer from \eqref{eq:conc-step3}, 
\begin{equation}\label{eq:conc-step3*}
\mathcal E_1(\psi,\Ab)\leq 2\kappa\, \ell\, \sqrt{\frac{1}{b|B_0(x_0)|}}\,\Es\big(bB_0(x_0)\big) +C\, \kappa\ell \, \bar\Sigma(\kappa,\kappa^{-\rho},\kappa^{-\delta}) \,.
\end{equation}
Now, let $f$ be the smooth function satisfying \eqref{eq:f*}. Again, using the properties of $f$ and a straightforward computation as in Step~3, we have
\begin{equation}\label{eq:step4}
\begin{aligned}
&\mathcal E_1(f\psi,\Ab)\leq (1+\kappa^{-\zeta})\, \mathcal E_1(\psi,\Ab)+C\Big(\kappa^{\zeta}+\kappa^2\sigma\ell^2\Big)\,,\\
&\int_{V_{x_0}(\tilde \ell )}f^2\Big(-1+\frac12f^2\Big)|\psi|^4\,dx\geq -\frac12\int_{V_{x_0}(\ell)}|\psi|^4\,dx
+C\sigma\ell^2\,.
\end{aligned}
\end{equation}
 Using the lower bound of $\mathcal E_1(f\psi;\Ab)$ in \eqref{eq:lb-loc-en}, we get from \eqref{eq:conc-step3*} and \eqref{eq:step4},
$$\left|\mathcal E_1(\psi,\Ab)-2\kappa \ell\, \sqrt{\frac{1}{b|B_0(x_0)|}}\,\Es\big(bB_0(x_0)\big)\right|\leq 
C\kappa\ell\,\bar\Sigma (\kappa,\kappa^{-\rho},\kappa^{-\delta})\,.$$
Remembering the definition of $\mathcal E_1(\psi,\Ab)=\mathcal E_1\big(\psi,\Ab;V_{x_0}((1+\sigma)\ell)\big)$, we get the statement concerning the local energy in Theorem~\ref{thm:lb-psi}.
 
Now we return back to \eqref{eq:decomp*}. Using \eqref{eq:step4}, we write
$$(1+\kappa^{-\zeta})\, \mathcal E_1(\psi,\Ab)+C\Big(\kappa^{-\zeta}+\kappa^2\sigma\ell^2\Big)\geq 
-\frac{\kappa^2}2\int_{V_{x_0}(\ell)}|\psi|^4\,dx
-C\sigma\ell^2\kappa^2\,.
$$
Rearranging the terms,  then using \eqref{eq:conc-step3*} and \eqref{eq:S-bar}, we arrive at the following upper bound
$$\frac{\kappa^2}2\int_{V_{x_0}(\ell)}|\psi(x)|^4\,dx\geq  -2\kappa\ell \, \sqrt{\frac{1}{b|B_0(x_0)|}}\,\Es\big(bB_0(x_0)\big) +C\,\kappa\,\ell\, \bar\Sigma (\kappa,\kappa^{-\rho},\kappa^{-\delta})\,.$$
 Using the remark around \eqref{eq:S-bar},  this finishes the proof of Theorem~\ref{thm:lb-psi}.
\end{proof}

\section{The superconductivity region: Proof of Theorem~\ref{thm:blk-sc*}}\label{sec:blk}

In this section, we present  the proof of Theorem~\ref{thm:blk-sc*} devoted to  the distribution of the superconductivity in the region 
$$\{x\in\overline\Omega,~b\,|B_0(x)|<1\}\quad \textrm{for the applied magnetic field }H=b\kappa\,.$$
The proof follows by an analysis similar to the one in Section~\ref{sec:surf}, so our presentation  will be shorter  here.

\begin{rem}\label{rem:blk-sc}
As $\ell\to0_+$, the area of  $\mathcal W(x_0,\ell)$ as introduced in \eqref{defW}  is
$$|\mathcal W(x_0,\ell)|=4\ell^2\quad{\rm if~}x_0\in\Omega\,,$$
and
$$|\mathcal W(x_0,\ell)|=4\ell^2+o(\ell^2)\quad{\rm if~}x_0\in\partial\Omega\,.$$
\end{rem}

%Our main result is:

The proof of Theorem~\ref{thm:blk-sc*} is presented in five steps. In the sequel, $\rho\in(\frac{2}{2+\alpha},1)$ and $c_2>c_1>0$ are fixed,
\begin{equation}\label{eq:ell-sigma}
c_1\kappa^{-\rho}\leq \ell\leq c_2\kappa^{-\rho}\quad{\rm and}\quad \sigma=\kappa^{\frac{\rho-1}2}\,.
\end{equation}
We will refer to the condition on $\ell$ by writing $\ell\approx\kappa^{-\rho}$.

~

\subsection*{Step~1. Useful estimates.}~

Let $f$ be a smooth function such that
\begin{equation}\label{eq:f**}
f=1\quad{\rm in~} \mathcal W(x_0,\ell),\quad 0\leq f\leq 1{~\rm and~}|\nabla f|\leq \frac{C}{\sigma\ell}\quad {\rm in~}\mathcal W\big(x_0,(1+\sigma)\ell\big)\,.
\end{equation}
As in the proof of \eqref{eq:decomp**}, we have
\begin{equation}\label{eq:decomp**blk}
\mathcal E_1\Big(f\psi,\Ab;\mathcal W(x_0,(1+\sigma)\ell)\Big)\leq -\frac{\kappa^2}2\int_{\mathcal W(x_0,\ell)}|\psi(x)|^4\,dx+C\sigma^{-1}\,.
\end{equation}
Here $\mathcal E_1$ is introduced in \eqref{eq:GLV}. Furthermore, we have the following two estimates (cf. \eqref{eq:step4}):
\begin{multline}\label{eq:lb-loc-en-blk}
\mathcal E_1\Big(f\psi,\Ab;\mathcal W(x_0,(1+\sigma)\ell)\Big)\leq
(1+\kappa^{-\zeta})\mathcal E_1\Big(\psi,\Ab;\mathcal W(x_0,(1+\sigma)\ell)\Big)\\+C\kappa^2\ell^2\big(\sigma^{-1}\kappa^{\zeta}(\kappa\ell)^{-2}+\sigma\big)\,,\end{multline}
and (cf. \eqref{eq:decomp*})
\begin{equation}\label{eq:lb-loc-en-blk*}
\begin{aligned}
\mathcal E_1\Big(f\psi,\Ab;\mathcal W(x_0,(1+\sigma)\ell)\Big)&\geq 
\kappa^2\int_{\mathcal W(x_0,(1+\sigma)\ell)}f^2\left(-1+\frac12f^2\right)|\psi|^4\,dx\\
&\geq-\frac{\kappa^2}2\int_{\mathcal W(x_0,\ell)}|\psi(x)|^4\,dx+C\sigma\ell^2\kappa^2\,,
\end{aligned}
\end{equation}
where $\zeta\in(0,1)$ is a constant to be chosen later.

~

\subsection*{Step~2. The case $B_0(x_0)=0$\,.}~

The upper bound for the integral of $|\psi|^4$ in Theorem~\ref{thm:blk-sc*} is trivial since $|\psi|\leq 1$ and $g(0)=-\frac12\,$.

 We have the obvious inequalities
$$\mathcal E_1\Big(f\psi,\Ab;\mathcal W(x_0,(1+\sigma)\ell)\Big)\geq \int_{\mathcal W(x_0,(1+\sigma)\ell)}\Big(-\kappa^2|f\psi|^2+\frac{\kappa^2}2|f\psi|^4\Big)\,dx\geq -\frac{\kappa^2}2 \int_{\mathcal W(x_0,(1+\sigma)\ell)}\,dx\,.$$
Inserting this into \eqref{eq:lb-loc-en-blk} and selecting $\zeta=\frac{1-\rho}2\,$, we get 
$$\mathcal E_1\Big(\psi,\Ab;\mathcal W(x_0,(1+\sigma)\ell)\Big)\geq -C\kappa^2\ell^2\big(\sigma^{-1}\kappa^{\zeta}(\kappa\ell)^{-2}+\sigma\big)=o(\kappa^2\ell^2)\,,$$
since $\sigma=\kappa^{\frac{\rho-1}2}$, $\ell\approx\kappa^{-\rho}$ and $\frac{2}{2+\alpha}<\rho<1\,$. 

Now we prove an upper bound for $\mathcal E_1\Big(f\psi,\Ab;\mathcal W(x_0,(1+\sigma)\ell)\Big)$.  Let $\eta_\ell$ be a smooth function satisfying
\begin{equation}\label{eq:eta-ell*}
\eta_\ell(x)=0~{\rm in~}\mathcal W(x_0,(1+\sigma)\ell)\,,\quad \eta_\ell(x)=1~{\rm in~}\Omega\setminus \mathcal W(x_0,(1+2\sigma)\ell)\,,\quad 0\leq \eta_\ell(x)\leq 1~{\rm in~}\Omega\,,
\end{equation}
and
\begin{equation}\label{eq:eta-ell**}
|\nabla\eta_\ell(x)|\leq C\sigma^{-1}\ell^{-1}\quad{\rm in~}\Omega\,,
\end{equation}
for some constant $C>0$\,.
We define the function
$$u(x)=\exp\big(i\kappa H w(x)\big)f(x)
+\eta_\ell(x)\psi(x)\,,$$
where the  function  $w$ is the sum of two functions $w_1$ and $w_2$ such that the two inequalities in \eqref{eq:w} are satisfied  in $\mathcal W(x_0,(1+\sigma)\ell))$.

We have the obvious decomposition
$$\mathcal E(u,\Ab)=\mathcal E_1\Big(\exp\big(i\kappa H w(x)\big)f(x),\Ab;\mathcal W(x_0,(1+\sigma)\ell)\Big)+
\mathcal E_2\Big(\eta_\ell(x)\psi(x),\Ab;\Omega\setminus\mathcal W(x_0,(1+\sigma)\ell)\Big)\,,$$
where $\mathcal E_1$ and $\mathcal E_2$ are introduced in \eqref{eq:GLV}.

We estimate $\mathcal E_2\Big(\eta_\ell(x)\psi(x),\Ab;\Omega\setminus\mathcal W(x_0,(1+\sigma)\ell)\Big)$ as we did in the proof of Theorem~\ref{thm:lb-psi} (cf. Step~3 and \eqref{eq:E2(u,A)}). In this way we get
\begin{multline}\label{eq:E2-blk*}
\mathcal E_2\Big(\eta_\ell(x)\psi(x),\Ab;\Omega\setminus\mathcal W(x_0,(1+\sigma)\ell)\Big)\leq (1+\kappa^{-\zeta})\mathcal E_2\Big(\psi(x),\Ab;\Omega\setminus\mathcal W(x_0,(1+\sigma)\ell)\Big)\\+C(\sigma^{-1}\kappa^\zeta+\sigma\kappa^2\ell^2)\,.
\end{multline}
For the term $\mathcal E_1\Big(\exp\big(i\kappa H w(x)\big)f(x),\Ab;\mathcal W(x_0,(1+\sigma)\ell)\Big)$, we argue as in the proof of Theorem~\ref{thm:lb-psi} (Step~2) and write
\begin{multline*}
\mathcal E_1\Big(\exp\big(i\kappa H w(x)\big)f(x),\Ab;\mathcal W(x_0,(1+\sigma)\ell)\Big)\\\leq (1+\ell^\alpha)\mathcal E_1\Big(f(x),B_0(x_0)\Ab_0;\mathcal W(x_0,(1+\sigma)\ell)\Big)+C(\kappa^2\ell^{2+\alpha}+\kappa^4\ell^{4+\alpha})\,.
\end{multline*}
Note that
$$
\begin{aligned}
\mathcal E_1\Big(f(x),B_0(x_0)\Ab_0;\mathcal W(x_0,(1+\sigma)\ell)\Big)&=\mathcal E_1\Big(f(x),0;\mathcal W(x_0,(1+\sigma)\ell)\Big)\\
&\leq C\sigma^{-1}+\kappa^2\int_{\mathcal W(x_0,(1+\sigma)\ell)}f^2\Big(-1+\frac{f^2}2\Big)\,dx\\
&\leq C\sigma^{-1}-\frac{\kappa^2}2|\mathcal W(x_0,(1+\sigma)\ell)|+C\sigma\kappa^2\ell^2\,.
\end{aligned}
$$
Therefore, we get the estimate
\begin{multline*}
\mathcal E_1\Big(\exp\big(i\kappa H w(x)\big)f(x),\Ab;\mathcal W(x_0,(1+\sigma)\ell)\Big)\leq -(1+\ell^\alpha)\frac{\kappa^2}2|\mathcal W(x_0,(1+\sigma)\ell)|\\
+C\kappa^2\ell^2(\ell^{\alpha}+\kappa^2\ell^{2+\alpha}+\sigma^{-1}(\kappa\ell)^{-2}+\sigma)\,,\end{multline*}
and consequently
\begin{multline*}
\mathcal E(u,\Ab)\leq -\frac{\kappa^2}2|\mathcal W(x_0,(1+\sigma)\ell)|+
 (1+\kappa^{-\zeta})\mathcal E_2\Big(\psi(x),\Ab;\Omega\setminus\mathcal W(x_0,(1+\sigma)\ell)\Big)\\
 +C\kappa^2\ell^2(\ell^{\alpha}+\kappa^2\ell^{2+\alpha}+\sigma^{-1}\kappa^\zeta(\kappa\ell)^{-2}+\sigma)\,.\end{multline*}
Using that $\mathcal E(\psi,\Ab)\leq \min\Big(0,\mathcal E(\psi,\Ab)\Big)$, we get
\begin{multline}\label{eq:E1-blk*}
(1+\kappa^{-\zeta})\mathcal E_1\Big(\psi,\Ab;\mathcal W(x_0,(1+\sigma)\ell)\Big)\leq -\frac{\kappa^2}2|\mathcal W(x_0,(1+\sigma)\ell)|\\
 +C\kappa^2\ell^2(\ell^{\alpha}+\kappa^2\ell^{2+\alpha}+\sigma^{-1}\kappa^\zeta(\kappa\ell)^{-2}+\sigma)\,.\end{multline}
We insert this into \eqref{eq:lb-loc-en-blk}, then we substitute  the resulting inequality into \eqref{eq:lb-loc-en-blk*}. In this way we get
$$
\int_{\mathcal W(x_0,\ell)}|\psi|^4\,dx\geq \frac12|\mathcal W(x_0,(1+\sigma)\ell)|-C(\sigma+\sigma^{-1}\kappa^\zeta(\kappa\ell)^{-2}+\kappa^2\ell^{2+\alpha}+\ell^\alpha+\kappa^{-\zeta})\,.
$$
The assumption on $\sigma$ and $\ell$ in \eqref{eq:ell-sigma} and the choice $\zeta=\frac{1-\rho}2$ yield that the term on the right hand side above is $o(1)$, hence  we get the lower bound for the integral of $|\psi|^4$  in Theorem~\ref{thm:blk-sc*}. Now,  the estimate of the energy follows by collecting the estimates in \eqref{eq:E1-blk*} and \eqref{eq:lb-loc-en-blk*}.

~

\subsection*{Step~3.  The case $|B_0(x_0)|>0$: Upper bound.}~

 We use \eqref{eq:ke-1} and \eqref{eq:ke-2} with $\gamma=\delta=\alpha$. We obtain, for some $C^{2,\alpha}$ real-valued function $w$,
\begin{multline}\label{eq:E1-lb-blk}
\mathcal E_1\Big(f\psi,\Ab;\mathcal W(x_0,(1+\sigma)\ell)\Big)\geq
\mathcal (1-\ell^\alpha)\mathcal E_1\Big(e^{-i\kappa H w}f\psi,\Ab_0;\mathcal W(x_0,(1+\sigma)\ell)\Big)\\-C\kappa^2\ell^2(\ell^\alpha+\kappa^2\ell^{2+\alpha})\,.
\end{multline}
If $x_0\in\Omega\,$, we get by re-scaling  and \eqref{eq:g(b)*} that 
$$\mathcal E_1\Big(e^{-i\kappa Hw}f\psi,\Ab_0;\mathcal W(x_0,(1+\sigma)\ell)\Big)\geq 4\kappa^2(1+\sigma)^2\ell^2 g(b|B_0(x_0)|)\,.$$
If $x_0\in\partial\Omega\,$, then we may write a lower bound for $\mathcal E_1\Big(f\psi,\Ab_0;\mathcal W(x_0,(1+\sigma)\ell)\Big)$ by converting to boundary coordinates as in Lemma~\ref{lem:est-E0} and get
\begin{align*}
\mathcal E_1\Big(e^{-i\kappa Hw}f\psi,\Ab_0;&\mathcal W(x_0,(1+\sigma)\ell)\Big)\\
&\geq \frac{(1-C\ell)}{b|B_0(x_0)|} e^N\Big(b|B_0(x_0)|,2(1+\sigma)\ell
\sqrt{|B_0(x_0)|\kappa H}\Big)-C\kappa^2\ell^2
\big(\ell+\kappa^2\ell^3\big)\\
&\geq 4\kappa^2(1+\sigma)^2\ell^2 g(b|B_0(x_0)|)-C\kappa^2\ell^2\big(\ell+\kappa^2\ell^3+(\kappa\ell)^{-1}\big)\,.
\end{align*}
Thus, we infer from \eqref{eq:E1-lb-blk}, for $x_0\in\overline\Omega\,$,
$$\mathcal E_1\Big(f\psi,\Ab;\mathcal W(x_0,(1+\sigma)\ell)\Big)\geq 4\kappa^2(1+\sigma)^2\ell^2 g(b|B_0(x_0)|)-C\kappa^2\ell^2\big(\ell^\alpha+\kappa^2\ell^{2+\alpha}+(\kappa\ell)^{-1}\big)\,.$$
Inserting this into \eqref{eq:decomp**blk}, we get
$$\frac12\int_{\mathcal W(x_0,\ell)}|\psi (x) |^4\,dx\leq 4(1+\sigma)^2\ell^2 g(b|B_0(x_0)|)+C\ell^2\big(\ell^\alpha+\kappa^2\ell^{2+\alpha}+(\kappa\ell)^{-1}+(\kappa\ell)^{-2}\sigma^{-1}\big)\,.$$
Our choice of $\sigma$ and $\ell$ in \eqref{eq:ell-sigma} guarantees  that the term on the right side above is $o(\ell^2)\,$. Using Remark~\ref{rem:blk-sc}, we get the upper bound in Theorem~\ref{thm:blk-sc*}\,.

\begin{rem}\label{rem:step3}
The proof in step~3 is still valid if $|B_0(x_0)|\geq \kappa^{-2\gamma}$, $0<\gamma<1-\rho$  and $  Q_{4\kappa^{-\rho}}(x_0)\subset\Omega\,.$
\end{rem}
~ 
\subsection*{Step~4. The case $|B_0(x_0)|>0$ and $x_0\in\partial\Omega\,$: Lower bound.} ~

For the sake of simplicity, we treat the case $B_0(x_0)>0$. The case $B_0(x_0)<0$ can be treated similarly by taking complex conjugation.

We define the function
$$u(x)=\mathbf 1_{\mathcal W(x_0,(1+\sigma)\ell)}(x)\exp\big(i\kappa H w(x)\big)w_R\circ\Phi_{x_0}^{-1}(x)
+\eta_\ell(x)\psi(x)\,,$$
where the function $\eta_\ell$ satisfies \eqref{eq:eta-ell*} and \eqref{eq:eta-ell**}. Similarly as in \eqref{eq:u}, the function  $w$ is the sum of two functions $w_1$ and $w_2$, defined in $\mathcal W(x_0,(1+\sigma)\ell))$ and satisfying  the two inequalities in \eqref{eq:w}. Finally
$$w_R(s,t)=
\exp\big(i\kappa H g_{x_0}(s,t)\big)\exp\left(\frac{-i\kappa H st}{2}\right)u_R\big(s\sqrt{B_0(x_0)\kappa H}\,,\,t\sqrt{B_0(x_0)\kappa H}\,\big)\,,
$$
and $g_{x_0}$ is the function satisfying  \eqref{eq:vR} in $\mathcal W(x_0,\ell)$ (by Lemma~\ref{eq-hc2-gaugeT}). The function $u_R\in H^1_0(Q_{R})$ is a minimizer of the  energy 
$e^D\big(bB_0(x_0),R\big)$ for $R=2(1+\sigma)\sqrt{B_0(x_0)\kappa H}$ (cf. \eqref{eq:eD}). We can estimate $\mathcal E(u,\Ab)$ similarly as we did in the proof of Theorem~\ref{thm:lb-psi} and get
\begin{multline*}
\mathcal E(u,\Ab)\leq 4(1+\sigma)^2\ell^2\kappa^2 g\big(bB_0(x_0)\big)+(1+\kappa^{-\zeta})\mathcal E_2(\psi,\Ab)\\
+C\kappa^2\ell^2\big(\ell^\alpha+\kappa^2\ell^{2+\alpha}+\sigma +\sigma^{-1}\kappa^{\zeta}(\kappa\ell)^{-2}\big)\,,
\end{multline*}
where $\zeta\in(0,1)$ will be chosen later and
\begin{multline*}
\mathcal E_2(\psi,\Ab)=\int_{\Omega\setminus \mathcal W(x_0,(1+\sigma)\ell)}\Big(|(\nabla-i\kappa H\Ab)\psi|^2-\kappa^2|\psi|^2+\frac{\kappa^2}2|\psi|^4  \Big)\,dx\\
+\kappa^2H^2\int_\Omega|\curl\Ab-B_0|^2\,dx\,.\end{multline*}
Now we use that $\mathcal E(\psi,\Ab)\leq \min(0,\mathcal E(u,\Ab))$ to write
\begin{multline}\label{eq:ub-loc-en-blk}
(1+\kappa^{-\zeta})\mathcal E_1\Big(\psi,\Ab;\mathcal W(x_0,(1+\sigma)\ell)\Big)\leq 4(1+\sigma)^2\ell^2\kappa^2 g\big(bB_0(x_0)\big)\\
+C\kappa^2\ell^2\big(\ell^\alpha+\kappa^2\ell^{2+\alpha}+\sigma +\sigma^{-1}\kappa^{\zeta}(\kappa\ell)^{-2}\big)\,.
\end{multline}
Now we use \eqref{eq:lb-loc-en-blk} and \eqref{eq:lb-loc-en-blk*} to obtain
\begin{multline*}
-\frac{\kappa^2}2\int_{\mathcal W(x_0,\ell)}|\psi(x)|^4\,dx+C\sigma\ell^2\kappa^2\leq 4(1+\kappa^{-\zeta})(1+\sigma)^2\ell^2\kappa^2 g\big(bB_0(x_0)\big)\\
+C\kappa^2\ell^2\big(\ell^\alpha+\kappa^2\ell^{2+\alpha}+\sigma +\sigma^{-1}\kappa^{-\zeta}(\kappa\ell)^{-2}\big)\,.
\end{multline*}
We select $\zeta=\frac{1-\rho}2$. Remembering that $\sigma=\kappa^{\frac{\rho-1}2}$ and $\ell\approx\kappa^{-\rho}$ (cf. \eqref{eq:ell-sigma}), we get the lower bound for the integral of $|\psi|^4$ as in Theorem~\ref{thm:blk-sc*}.

For the estimate of the local energy $\mathcal E_1(\psi,\Ab;\mathcal W(x_0,(1+\sigma)\ell))$, we collect the inequalities 
in \eqref{eq:ub-loc-en-blk}, \eqref{eq:lb-loc-en-blk}, \eqref{eq:lb-loc-en-blk*} and the lower and upper bounds for the integral of $|\psi|^4$.

\begin{rem}\label{rem:step4}
Remark~\ref{rem:step3} holds for Step~4 as well.
\end{rem}
~

\subsection*{Step 5. The case $|B_0(x_0)|>0$ and $x_0\in\Omega\,$: Lower bound.}~

In this case $\mathcal W_{x_0}((1+\sigma)\ell)=Q_{2(1+\sigma)\ell}(x_0)$. We define the following trial state
$$u(x)=\mathbf 1_{\mathcal W(x_0,(1+\sigma)\ell)}(x)\exp\big(i\kappa H w(x)\big)w_R(x)
+\eta_\ell(x)\psi(x)\,,$$
where the functions $w$ and $\eta_\ell$ are as in Step~4, 
$$w_R(s,t)=\left\{
\begin{array}{l}
u_R\big(\sqrt{B_0(x_0)\kappa H}\,(x-x_0)\big)~{\rm if}~B_0(x_0)>0\,,\\
~\\
\overline{u_R\big(\sqrt{B_0(x_0)\kappa H}\,(x-x_0)\big)}~{\rm if}~B_0(x_0)<0\,,
\end{array}\right.
$$
and $u_R\in H^1_0(Q_R)$ is a minimizer of the  energy 
$e^D\big(bB_0(x_0),R\big)$ for $R=2(1+\sigma)\sqrt{B_0(x_0)\kappa H}$ (cf. \eqref{eq:eD}).

We argue as in Step~4 and obtain the lower bound for the integral of $|\psi|^4$ in Theorem~\ref{thm:blk-sc*}. The details are omitted.

\subsection*{Acknowledgements}
%
%{\clr Pour Bernard:\\
A.K.
 acknowledges  financial support  from the Lebanese
University through {\it `Equipe de Modelisation, Analyse et Applications'}. 
This work  was achieved when the first author visited 
%was  distinguished visiting professor at 
the Center for Advanced Mathematical Sciences at the American University of Beirut in the framework of the {\it Atiyah Distinguished Visitors Program.
}

\end{document}